\definecolor{forestgreen}{rgb}{0.0, 0.5, 0.0}
\newcommand{\numberset}{\mathbb}
\newcommand{\N}{\numberset{N}}
\newcommand{\R}{\numberset{R}}
\newcommand{\E}{\numberset{E}}
\newcommand{\F}{\mathcal{F}}  
\DeclareMathOperator{\law}{\text{Law}}  
\DeclareMathOperator{\w}{\mathcal{W}}  
\DeclareMathOperator{\pr}{\mathscr{P}} 
\DeclareMathOperator{\p}{\numberset{P}}  
\DeclareMathOperator{\meas}{meas}   
\newcommand{\ndelta}{M}    
\newcommand{\ngrid}{N}  
\DeclareMathOperator{\linfproctwo}{L^{\infty}(Q;H_T^2)}   
\DeclareMathOperator{\linfmeastwo}{L^{\infty}(Q;C_T^2)} 
\theoremstyle{definition}
\newtheorem{definition}{Definition}[section]
\newtheorem{theorem}[definition]{Theorem}
\newtheorem{lemma}[definition]{Lemma}
\newtheorem{remark}[definition]{Remark}
\newtheorem*{notation*}{Notation}
\newtheorem*{definition*}{Definition}
\newtheorem*{theorem*}{Theorem}
\newtheorem*{lemma*}{Lemma}
\newtheorem*{corollary*}{Corollary}
\newtheorem*{proposition*}{Proposition}
\newtheorem*{fact*}{Fact}
\newtheorem*{example*}{Example}
\newtheorem*{claim*}{Claim}
\newtheorem*{remark*}{Remark}
\newtheorem*{conjecture*}{Conjecture}
\numberwithin{equation}{section}
\title{The mean field limit of stochastic differential equation systems modelling grid cells}
\author{José A. Carrillo\thanks{Mathematical Institute, University of Oxford, Oxford OX2 6GG, UK (carrillo@maths.ox.ac.uk)}
\and Andrea Clini\thanks{Mathematical Institute, University of Oxford, Oxford OX2 6GG, UK (andrea.clini@maths.ox.ac.uk)}
\and Susanne Solem \thanks{Department of Mathematics, University of Life Sciences, NO-1433 \AA s, Norway (susanne.solem@nmbu.no)}}
\date{\today}
\begin{document}

\maketitle

\begin{abstract}
Several differential equation models have been proposed to explain the formation of patterns characteristic of the grid cell network. Understanding the robustness of these patterns with respect to noise is one of the key open questions in computational neuroscience. 
In the present work, we analyze a family of stochastic differential systems modelling grid cell networks. Furthermore, the well-posedness of the associated McKean--Vlasov and Fokker--Planck equations, describing the average behavior of the networks, is established.
Finally, we rigorously prove the mean field limit of these systems and provide a sharp rate of convergence for their empirical measures.
\end{abstract}




\section{Introduction}
\label{introduction}
The discovery of a type of neurons in the brain named grid cells in 2005 \cite{gridcells} led to a breakthrough in the understanding of the navigational system in mammalian brains, see \cite{McNaughtonMoser} for an extensive review. These neurons fire as an animal moves around in an open area, enabling the animal to understand its position in space. The grid cell network has commonly been described by deterministic continuous attractor network dynamics through a system of neural field models \cite{Ermentrout2010, McNaughtonetal, burakfiete, coueyetal}, which are based on the classical papers \cite{WC1, WC2, Amari1977}. The models can fairly accurately predict what can be observed in experiments. However, the question of how the grid cell network is affected by noise, posed as a challenge in \cite{tenyears}, has been left open. 

In \cite{BurakFieteNoise} fundamental limits on how information dissipates in attractor networks of noisy neurons were derived. A different direction pursuing further understanding of the effect of noise on grid cell networks was made in \cite{CHS2020} by studying a system of Fokker--Planck-like partial differential equations (PDEs). The system of PDEs was derived by adding noise to the attractor network models in \cite{burakfiete, coueyetal} and formally taking the mean field limit. In the present manuscript this limit is rigorously proved. In addition, we derive the limit for more general noise terms, which covers the models considered in \cite{BurakFieteNoise, AB20}.  

The mean field limit of interacting particle systems has lately received lots of attention in mathematical biology
\cite{BCC, FI15, CS2018}, see \cite{JW17} for a survey. The closest result to the analysis presented in this work, shows the mean field limit of a stochastic delayed set of interacting neurons \cite{TouboulPhysD}. The system of stochastic differential equations (SDEs) describing interacting grid cells in this work introduces different challenges: boundary conditions imposing positivity of the activity level of the neurons, non-linearity of the firing rate, and coupling between different families of neurons.  

The neural model under consideration, which is based on the model in \cite{burakfiete}, can be described as follows. Given space points $x_1,\dots,x_N\in Q$ in a region $Q$ of the neural cortex, we will consider the following model for the interaction among $NM$ neurons stacked in $N$ columns at locations $x_i$ with $M$ neurons each, where $u_{ik}^\beta$ represents the activity level with orientation $\beta$ of the $k^{th}$ neuron at location $x_i$:
\begin{subequations}
\label{concrete model with reflection term}
\begin{align}[left ={\empheqlbrace}]
        u^\beta_{ik}(t)\tau_i^\beta=&\, \tau^\beta_iu_{ik}^\beta(0)+\sigma W_{ik}^\beta(t)-\ell^\beta_{ik}(t)
        \nonumber \\
        & +\int_0^t\left(-u_{ik}^\beta(r)+\phi\Big(B^\beta(x_i,r)+\frac{1}{4\ngrid\ndelta}\sum_{\gamma=1}^4\sum_{j=1}^{\ngrid}\sum_{m=1}^\ndelta K^\gamma(x_i-x_j)u_{jm}^\gamma(r)\Big)\right)\,dr,
        \label{concrete model particle system} \\
        \ell_{ik}^{\beta}(t)= &\, -\big|\ell_{ik}^{\beta}\big|(t),\quad  \big|\ell_{ik}^{\beta}\big|(t)=\int_0^t1_{\{u_{ik}^\beta(r)=0\}}d\big|\ell_{ik}^{\beta}\big|(r)\quad\text{for }\beta=1,2,3,4.
    \label{reflection term concrete model particle system}
  \end{align}
  \end{subequations}
For simplicity, we consider $Q=[0,1]^d$.
The results in this work are easily extended to any bounded open subset $Q\subseteq\R^d$, for any $d\geq1$.
Here, for integers $k=1,\dots,M$, we have i.i.d. families of random initial conditions $\{u_{ik}(0)\}_{i=1,\dots,N}$ for each space point $x_i$ in the cortex $Q$.
Moreover, for integers $i=1,\dots, N$ and $k=1,\dots, M$, we have $4$-dimensional Brownian motions $(W_{ik}^\beta)_{\beta=1,2,3,4}$, which can also be correlated.

The nonlinear function $\phi:\R\to\R$, representing the firing rate of neurons in the network, is globally Lipschitz, whereas the external inputs $B^\beta:Q\times\R\to\R$ and the interaction kernels $K^\beta:\R^{d_Q}\to\R$ for $\beta=1,2,3,4$ are only required to be locally bounded functions and $\alpha$-H\"older continuous in the $x$ variable for some $\alpha\in(0,1]$. The interaction kernels takes into account the inhibitory/excitatory effect on nearby neurons. A typical choice of the interaction kernel in computational neuroscience \cite{burakfiete} is given by the so-called Mexican hat function. The relaxation times $\tau_{i}^\beta$ satisfy the condition $0<\inf_{i,\beta}\tau_i^\beta\leq\sup_{i,\beta}\tau_i^\beta<+\infty$. 

Finally, for each $i$, $k$ and $\beta$, the term $\ell_{ik}^\beta$ is a finite variation process defined by \eqref{reflection term concrete model particle system} which prevents the activity level $u_{ik}^\beta$ from taking negative values. 
Namely, as we can see in its definition, at each time $t$ this process equals the opposite of its total variation $\ell_{ik}^{\beta}(t)=-\big|\ell_{ik}^{\beta}\big|(t)$. In turn, the total variation stays constant when $u_{ik}^\beta>0$ and it increases in the form $\big|\ell_{ik}^{\beta}\big|(t)=\int_0^t1_{\{u_{ik}^\beta(r)=0\}}d\big|\ell_{ik}^{\beta}\big|(r)$ when $u_{ik}^\beta=0$, so as to push  $u_{ik}^\beta$ away from zero which is being dragged by the other terms at the right hand side of \eqref{concrete model particle system}.
The introduction of such terms and constraints is therefore known as imposing \emph{reflecting boundary conditions} and $\ell_{ik}^\beta$ is called a \emph{reflection term}.
The existence and uniqueness of such a term need of course to be proved and this process is often referred to as the \emph{Skorokhod problem}.
Precise details concerning the well-posedness and the construction of the reflection term in our setting are all presented in the seminal papers \cite{Lions-Sznitman-1984-SDEreflectingBC, Sznitman-1984-nonlinear-reflecting-diffusion} by Lions and Sznitman.

Going back to \eqref{concrete model particle system}, we notice that the argument of $\phi$ in \eqref{concrete model particle system} can be rewritten as
\begin{equation*}
    \frac{1}{4\ngrid\ndelta}\sum_{\gamma=1}^4\sum_{j=1}^{\ngrid}\sum_{m=1}^\ndelta K^\gamma(x_i-x_j)u_{jm}^\gamma(r)=\int_{Q\times\R^4}\frac{1}{4}\sum_{\gamma=1}^4K^\gamma(x_i-y)u^\gamma f_{\ngrid,\ndelta}(r,dy,du),
\end{equation*}
by considering the empirical measure associated to these particles, that is
\begin{equation}\label{empirical measure}
    f_{\ngrid,\ndelta}(r,dy,du)=\frac{1}{\ngrid\ndelta}\sum_{j=1}^{\ngrid}\sum_{m=1}^\ndelta\delta_{(x_j,u_{jm}(r))}\quad\text{regarded as a measure on $Q\times\R^4$.}
\end{equation}

Concerning the initial conditions and the form of the noise term in \eqref{concrete model particle system}, from a modelling point of view it is reasonable to assume that, for $k\in \N$, we have i.i.d. families of initial conditions $\big(u_k(x,0)\big)_{x\in Q}$ for each space point $x$ in the cortex $Q$.
Similarly, we assume that, for $k\in\N$, we have independent $4$-dimensional space-time white noise terms $\big(W_k(x,t)\big)_{t\geq0, x\in Q}$.
Naively, $W_k^\beta(x,t)$ is a centered Gaussian random field indexed by $k\in\N$, $\beta=1,2,3,4$, $x\in Q$ and $t\in[0,\infty)$ with covariance
\begin{equation}\label{naive construction white noise}
    \E\left[W_k^\beta(x,t)W_h^\gamma(y,s)\right]=(t\wedge s)\, \delta_0(k-h)\,\delta_0(\beta-\gamma)\,\delta_0(x-y).
\end{equation}
Then we can just choose points $x_1,\dots,x_N\in Q$ and set $u_{ik}(0)\coloneqq u_k(x_i,0)$ and $W_{ik}(t)\coloneqq W_k(x_i,t)$.
As long as we work in a countable setting, this naive construction can be made rigorous upon taking a suitable modification of the $W_{ik}$'s via the Kolmogorov continuity theorem.
We also point out that the way we choose the cloud of points $x_1,\dots x_N\in Q$ is not that important if we are only concerned with the discrete model for fixed $M$ and $N$.
However, to get a nice limiting behaviour as $N,M\to\infty$, it is useful to take these points to be the nodes of a grid of $Q$ whose mesh tends to zero.
Precise details on this are given in Section \ref{Comparison between the particle system and the limiting model}.

\begin{remark} \label{loss of indepence}
One should not expect the initial data $(u_k(x,0))_{x\in Q}$ to be independent for different values of $x$, nor to be equidistributed.
Indeed, from the point of view of modelling in neuroscience, $u_k(x,0)$ should be close to $u_k(y,0)$ for $x$ close to $y$.
This fact will have consequences both on the exchangeability properties of the particles $u_{ik}$, which are expected to be exchangeable in the index $k$ only, and on the rate of convergence towards the limiting behavior.
\end{remark}

As we let $M,N\to\infty$ the limiting behaviour should be described by independent copies, in the column index $k$, of solutions to an associated mean field McKean--Vlasov equation.
Namely, the activity level of any neuron located at a point $x\in Q$ should satisfy an equation like:
\begin{align}[left ={\empheqlbrace}]\label{concrete model McKean--Vlasov}
    \begin{split}
    \Bar{u}^{\beta}(x,t)\tau^\beta(x)=&\tau^\beta(x)u^\beta(x,0)+\sigma W^\beta(x,t)-\bar{\ell}^\beta(x,t)\\
    & +\int_0^t\Bigg(-\Bar{u}^\beta(x,r)+\phi\Big(B^\beta(x,r)+\frac{1}{4}\sum_{\gamma=1}^4\int_{Q\times\R^4}\!\!\!\!\!\!\!\!K^\gamma(x-y)u^\gamma f(r,y,du)dy\Big)\Bigg)\,dr,
    \\
    \bar{\ell}^{\beta}(x,t)=&-\big|\bar{\ell}^{\beta}(x,\cdot)\big|(t),\quad \big|\bar{\ell}^{\beta}(x,\cdot)\big|(t)=\int_0^t1_{\{\Bar{u}^\beta(x,r)=0\}}d\big|\bar{\ell}^{\beta}(x,\cdot)\big|(r)\quad\text{for }\beta=1,2,3,4,
    \end{split}
\end{align}
where we have set $f(t,y,du)\coloneqq\law_{\R^4}(\Bar{u}(y,t))$ considered as a measure on $\R^4$ depending on $t\in[0,\infty)$ and $y\in Q$.
Notice that in turn this induces a probability measure $f(t,dx,du)$ on $Q\times\R^4$ defined by integration as 
\begin{equation}\label{McKean--Vlasov law on Q times R^4}
    \int_{Q\times\R^4}\varphi(x,u) f(t,dx,du)\coloneqq\int_Q\int_{\R^4}\varphi(x,u)f(t,x,du)\,dx\qquad\text{for any $\varphi\in C_b(Q\times\R^4)$}.
\end{equation}
For each fixed $x\in Q$ and $\beta=1,2,3,4$, the finite variation process $\bar{\ell}^\beta(x,t)$ is again the reflection term coming from the Skorokhod problem (see the explanation after equation \eqref{concrete model with reflection term}) and it ensures that $\bar{u}^\beta(x,t)\geq0$ for every $x$, $t$ and $\beta$. We refer the reader to \cite{Sznitman-1984-nonlinear-reflecting-diffusion} for the details about such a process in the context of a classical McKean--Vlasov equation.

\begin{remark}\label{ill-posedeness of the uncorrelated mckean-vlasov equation}
The McKean--Vlasov equation \eqref{concrete model McKean--Vlasov} suffers from a major technical issue.
Indeed, formula \eqref{naive construction white noise} does define an $\R$-valued Gaussian random field. However, it is well-known that such a random field cannot be jointly measurable in the $x$ variable and the sample $\omega$.
This reflects into lack of $x$-measurability of the particles $\bar{u}^{\beta}(x,t)$ and, in turn, into that of the law $f(t,x,du)$, which we need to be Lebesgue integrable.
In this work, we resolve this issue by considering $\epsilon$-correlated noise.

Another approach, coming from the theory of mean field games, is to address the issue by introducing a ``Fubini extension'' of the product probability space $Q\times\Omega$.
We refer the reader to \cite{aurell2021stochastic} and the references therein.
However, this approach did not seem to fit our modelling purposes.
It allows to regain the $x$-measurability only with respect to a bigger $\sigma$-algebra, \emph{strictly} containing the Lebesgue measurable sets.
In turn, the space integral in \eqref{concrete model McKean--Vlasov} would not be taken with respect to the Lebesgue measure, but instead with respect to some exotic extension of this.
\end{remark}

A formal application of the It\^o formula shows that $f$, the joint distribution of the activity levels $u^\beta$ in the four directions $\beta$, satisfies the nonlinear Fokker--Planck equation
\begin{align}\label{concrete model nonlinear fokker--planck}
    \begin{split}
        \partial_tf(t,x,u)\!+\sum_{\beta=1}^4\frac{1}{\tau^\beta(x)}\partial_{u^\beta}\! \Bigg(f(t,x,u)\Big(\!\!-\!u^\beta\!\!+\!\phi\Big(B^\beta(x,t)\!+ \!\frac{1}{4}\!\!&\sum_{\gamma=1}^4\!\int_{Q\times\R^4}\!\!\!\!\!\!\!\!\!\!K^\gamma(x-y)v^\gamma f(t,y,dv)dy\Big)\,\Big)\Bigg)
        \\
        &=\frac{\sigma^2}2\sum_{\beta=1}^4\frac{1}{\tau^\beta(x)^2}\partial^2_{u^\beta u^\beta}f(t,x,u),
    \end{split}
\end{align}
in the weak sense, with initial condition $f(0,x,du)=\law_{\R^4}(u(x,0))$ and subjected to the no-flux boundary conditions, for $\beta=1,2,3,4$,
\begin{equation}
        \phi\Big(B^\beta(x,t)\!+\frac{1}{4}\sum_{\gamma=1}^4\!\int_{Q\times\R^4}\!\!\!\!\!\!\!\!\!K^\gamma(x-y)v^\gamma f(t,y,dv)dy\Big)f(t,x,u)-\frac{\sigma^2}2\frac{1}{\tau^\beta(x)}\frac{\partial}{\partial u^\beta}f(t,x,u)\Big|_{u^\beta=0}=0,
\end{equation}
which come from the reflecting boundary conditions at the SDE level.

\begin{remark}
It is worth pointing out that equation \eqref{concrete model nonlinear fokker--planck} would arise as the law of $\bar{u}(x,t)$ even if we set $W(x,t)\equiv B_t$ for every $x\in Q$ for a single Brownian motion $B_t$, that is if all the particles were affected by the same noise.
The same holds for many other choices of $W(x,t)$, and follows immediately from the It\^o formula: the effect of the term $W(x,t)$ is only to generate diffusion in the $u$ variable, for fixed $x$.
The choice of noise to consider in \eqref{concrete model particle system} and \eqref{concrete model McKean--Vlasov} is therefore dictated by modelling purposes only.
\end{remark}

\begin{remark}\label{marginals of fokker--planck}
We notice that for each $\beta=1,2,3,4$, integrating equation \eqref{concrete model nonlinear fokker--planck} in $\R_+^3$ over the remaining variables $u^\gamma$ for $\gamma\neq\beta$ and exploiting the boundary conditions, we get the equation satisfied by the marginal distribution $f^\beta(r,y,du^\beta)=\law_{\R}(\Bar{u}^\beta(y,r))$. Namely, we obtain
\begin{align}\label{concrete model marginal nonlinear fokker--planck}
    \begin{split}
        \partial_tf^\beta(t,x,u^\beta)\!+\!\frac{1}{\tau^\beta(x)}\partial_{u^\beta}\! \Bigg(\!\!f^\beta(t,x,u^\beta)\Big(\!\!-\!u^\beta\!\!+\!\phi\Big(\!B^\beta(x,t)\!+ \!\frac{1}{4}\!\!\sum_{\gamma=1}^4\!\int_{Q\times\R^4}\!\!\!\!\!\!\!\!\!\!&K^\gamma(x-y)v^\gamma f^\gamma(t,y,dv^\gamma)dy\Big)\Big)\Bigg)
        \\
        &=
        \frac{\sigma^2}2\frac{1}{\tau^\beta(x)^2}\frac{\partial^2 f^\beta}{(\partial u^\beta)^2}(t,x,u^\beta).
    \end{split}
\end{align}
In particular, we stress the fact that each marginal $f^\beta$ satisfies an equation involving only the other marginals $f^\gamma$, and not the full joint distribution $f$.
On the other hand, if we sum equation \eqref{concrete model marginal nonlinear fokker--planck} over $\beta=1,2,3,4$, then we get back equation \eqref{concrete model nonlinear fokker--planck} above for the decoupled distribution $\Tilde{f}\coloneqq\Pi_{\beta=1}^4f^\beta$.
Thus equation \eqref{concrete model nonlinear fokker--planck} and the system of equations \eqref{concrete model marginal nonlinear fokker--planck} for $\beta=1,2,3,4$ are completely equivalent, at least for decoupled initial data $f_0=\Pi_{\beta=1}^4f^\beta_0$.
Finally, Theorem \ref{Well-posedness of the non-linear fokker--planck equations} below asserts we have existence and uniqueness for equation \eqref{concrete model nonlinear fokker--planck}.
The previous argument then shows that, if we start with decoupled initial data, this structure is preserved: the corresponding solution satisfies $f(t)=\Pi_\beta f^\beta(t)$ for all $t\geq0$. Notice that \eqref{concrete model marginal nonlinear fokker--planck} is the model formally introduced in \cite{CHS2020}.
\end{remark}

The structure of this work is as follows. The next section is devoted to introduce the notation and the setting needed for the results. We finish the section by stating the main theorems concerning the mean field limit of \eqref{concrete model with reflection term} and its extensions. Sections \ref{Strong existence and uniqueness for the particle systems section} and \ref{the limiting model} focus on the existence and uniqueness of the particle systems, and the associated McKean--Vlasov equations and Fokker--Planck type PDEs. The main core of this work is found in Section \ref{Comparison between the particle system and the limiting model}, where we rigorously prove the mean field limit. Section \ref{Convergence of empirical measures} adapts previous results on empirical measure error estimates \cite{Fournier2013OnTR} to the present setting to provide rates of convergence for the associated empirical measure.


\section{Preliminaries and main results}

\subsection{Hypotheses and notation}
\label{Hypotheses and notations}

In this section we introduce the hypotheses we assume for our problem.
First, we point out that the results of this paper extend to the more general particle system
\begin{align}[left ={\empheqlbrace}]\label{abstract 4 model particle system}
\begin{split}
    u_{ik}(t)=&\,u_{ik}(0)+\int_0^t\!b(x_i,r,u_{ik}(r),f_{N,\ndelta}(r))\,dr+\int_0^t\!\sigma(x_i,r,u_{ik}(r),f_{N,\ndelta}(r))\,dW_{ik}(r)-\ell_{ik}(t),
    \\[4mm]
    \ell_{ik}^{\beta}(t)=&\,-|\ell_{ik}^{\beta}|(t),\quad |\ell_{ik}^{\beta}|(t)=\int_0^t1_{\{u^\beta_{ik}(r)=0\}}d|\ell_{ik}^{\beta}|(r)\quad\text{for }\beta=1,2,3,4,
\end{split}
\end{align}
for $N$ columns of $M$ neurons each, located at $x_1,\dots,x_N$, with general drift term $b$ and diffusion term $\sigma$.
Here $f_{N,\ndelta}(r,dy,du)$ is again the empirical measure associated to the particles \eqref{abstract 4 model particle system}, given by \eqref{empirical measure}. As before, $\ell^\beta_{ik}$ is the the reflection term coming from the Skorokhod problem \cite{Lions-Sznitman-1984-SDEreflectingBC} forcing $u_{ik}^\beta(t)\geq0$ for every $t\geq0$.

The precise details on the shape and hypotheses on $b$ and $\sigma$ are given here below and they are simply deduced from the properties of the concrete model \eqref{concrete model with reflection term}. 

Let $\pr(Q\times\R^4)$ denote the set of probability measures on $Q\times\R^4$, for $\beta=1,2,3,4$ we assume that $b_\beta,\sigma_{\beta}:Q\times\R^+\times\R^4\times\pr(Q\times\R^4)\to\R$ take the forms
\begin{align}\label{drift term structure}
   &b_{\beta}(x,r,u,f)=b^{\beta}_0(x,r,u)+\phi_{b_{\beta}}\left(\int_{Q\times\R^4}b_1^\beta(x,y,r,u,v)\,f(dy,dv)\right),
    \\
    \label{diffusion term structure}
    &\sigma_{\beta}(x,r,u,f)=\sigma^{\beta}_0(x,r,u)+\phi_{\sigma_{\beta}}\left(\int_{Q\times\R^4}\sigma_1^\beta(x,y,r,u,v)\,f(dy,dv)\right).
\end{align}
Having in mind the concrete model \eqref{concrete model particle system}, we suppose $b_0^\beta,\sigma_0^\beta:Q\times\R^+\times\R^4\to\R$ are measurable, locally bounded, Lipschitz in $u\in\R^4$ uniformly in $x,r\in Q\times\R^+$, and $\alpha$-H\"older in $x\in Q$ uniformly in $u,r\in \R^4\times\R^+$. That is
\begin{align}
    \label{b_0 sigma_0 lipschitz property}
    \big|b_0^{\beta}(x,r,u)-b_0^{\beta}(x',r,u')\big|+\big|\sigma_0^{\beta}(x,r,u)-\sigma_0^{\beta}(x',r,u')\big|&\leq L \left(|x-x'|^{\alpha}+ |u-u'|\right),
    \\
    \label{b_0 sigma_0 sublinear property}
    \big|b_0^{\beta}(x,r,u)\big|+\big|\sigma_0^{\beta}(x,r,u)\big|&\leq C \left(1+|u|\right),
\end{align}
for all $x,x',u,u',r\in Q^2\times\left(\R^4\right)^2\times\R^+$, for suitable constants $L$ and $C$.
Furthermore we take the functions $\phi_{b_{\beta}},\phi_{\sigma_{\beta}}:\R\to\R$ to be globally Lipschitz functions, and thus with sublinear growth.
Similarly, the mappings $b_1^\beta,\sigma_1^\beta:Q\times Q \times\R^+\times\R^4\times \R^4\to\R$ are measurable, locally bounded, Lipschitz in $u,v\in\R^4$ uniformly in $x,y,r\in Q^2\times\R^+$, and $\alpha$-H\"older in $x,y\in Q$ uniformly in $u,v,r\in \left(\R^4\right)^2\times\R^+$.
That is,
\begin{align}
    \label{b_1 sigma_1 lipschitz property}
    \big|b_1^{\beta}(x,y,r,u,v)-b_1^{\beta}(x,y,r,u',v')\big| \qquad & \nonumber
    \\+\big|\sigma_1^{\beta}(x,y,r,u,v)-\sigma_1^{\beta}(x,y,r,u',v')\big|&\leq L \left(|x-x'|^{\alpha}+|y-y'|^{\alpha}+|u-u'|+|v-v'|\right),
    \\
    \label{b_1 sigma_1 sublinear property}
    \big|b_1^{\beta}(x,y,r,u,v)\big|+\big|\sigma_1^{\beta}(x,y,r,u,v)\big|&\leq C \big(1+|u|+|v|\big),
\end{align}
for all $x,y,x',y',u,v,u',v',r\in Q^4\times\left(\R^4\right)^4\times \R^+$, for suitable constants $L$ and $C$.

\begin{remark}
With the notation just introduced, the starting model \eqref{concrete model particle system} is recovered by setting
\begin{equation}\nonumber
   \tau^\beta(x) b^\beta(x,r,u,f)\!=\!-u^\beta\!+\!\phi\left(\!B^\beta(x,r)\!+\!\frac{1}{4}\!\sum_{\gamma=1}^4\!\!\int_{Q\times\R^4}\!\!\!\!\!\!\!\!\!\!\!\!K^\gamma(x-y)u^\gamma f(dy,du)\right),
    \quad \text{and} \quad
    \tau^\beta(x)\sigma(x,r,u,f)\!\equiv \sigma.
\end{equation}
\end{remark}

We now consider the limiting McKean--Vlasov system.
Taking into account the measurability issues pointed out in Remark \ref{ill-posedeness of the uncorrelated mckean-vlasov equation}, we consider instead equation \eqref{concrete model McKean--Vlasov} with a suitably rescaled $\epsilon$-correlated noise, for some $\epsilon>0$.
In the setting of the general particle system \eqref{abstract 4 model particle system}, the equation reads:
\begin{align}[left ={\empheqlbrace}]\label{abstract 4 model McKean--Vlasov}
    \begin{split}
    \Bar{u}^{\epsilon}(x,t)=&u(x,0)+\int_0^tb(x,r,\Bar{u}^{\epsilon}(x,r),f(r))\,dr+\int_0^t\sigma(x,r,\Bar{u}^{\epsilon}(x,r),f(r))\,dW^{\epsilon}(x,r)-\bar{\ell}(x,t),
    \\
    \bar{\ell}^{\beta}(x,t)=&-|\bar{\ell}^{\beta}(x,\cdot)|(t),\quad |\bar{\ell}^{\beta}(x,\cdot)|(t)=\int_0^t1_{\{(\Bar{u}^{\epsilon})^\beta(x,r)=0\}}\,d|\bar{\ell}^{\beta}(x,\cdot)|(r)\quad\text{for }\beta=1,2,3,4,
    \end{split}
\end{align}
where $f(r,y,du)=\law_{\R^4}(\Bar{u}^{\epsilon}(y,r))$ is viewed as a measure on $\R^4$, and $f(r)=f(r,dx,du)$ the induced probability measure defined by \eqref{McKean--Vlasov law on Q times R^4} on $Q\times\R^4$.
Similarly, the reflection term $\bar{\ell}(x,t)$ still ensures $(\bar{u}^{\epsilon})^\beta(x,t)\geq0$ for each $x$, $t$ and $\beta$ (see again \cite{Sznitman-1984-nonlinear-reflecting-diffusion}).
Here $W^{\epsilon}:\Omega\times\R^d\times\R^+\to\R^4$ is a $4$-dimensional Gaussian random field with independent components $\beta=1,2,3,4$, zero mean and covariance 
\begin{equation}\label{formula/epsilon rescaled white noise}
    \E\left[W^{\epsilon,\beta}(x,t)W^{\epsilon,\beta}(y,s)\right]\!=\!(t\wedge s)\, C_{\rho}\,\epsilon^d\int_{\R^d}\rho_\epsilon(z-x)\rho_{\epsilon}(z-y)\,dz,\quad \text{for } C_{\rho}=\left(\int_{\R^d}\rho(z)^2\,dz\right)^{-1}\!\!,
\end{equation}
where $\rho:\R^d\to[0,1]$ is a radial mollifier supported in the unitary ball, and $\rho_\epsilon$ the $\epsilon$-rescaled version. Such a process $W^{\epsilon,\beta}$ can for example be obtained by convolution and rescaling from a ``mathematically rigorous'' space-time white noise (see e.g. \cite{daprato_zabczyk_1992}).
That is, a distribution valued process $W:\Omega\times\R^+\to \mathcal{S}'(\R^d)$ such that, for $\varphi\in\mathcal{S}(\R^d)$, the processes $\langle W_t,\varphi\rangle$ are jointly Gaussian with covariance 
\begin{equation}
    \E\left[\langle W_t,\varphi\rangle \,\langle W_s,\psi\rangle\right]=(t\wedge s)\int_{\R^d}\varphi(z)\,\psi(z)\,dz.
\end{equation}
Then, for $\beta=1,2,3,4$ and independent copies of such a white noise, one defines
\begin{equation}\label{formula/epsilon convoluted and rescaled noise}
    W^{\epsilon,\beta}(x,t)\coloneqq C_{\rho}^{\frac{1}{2}}\,\epsilon^{\frac{d}{2}}\,\langle W_t,\rho_{\epsilon}(\cdot-x)\rangle.
\end{equation}
For future reference, we highlight some of the properties of $W^\epsilon$.
First, from \eqref{formula/epsilon rescaled white noise} we have that $\E[W^{\epsilon,\beta}(x,t)W^{\epsilon,\gamma}(x,s)]=\delta_0(\beta-\gamma)\,t\wedge s$. 
Thus, for fixed $x$, the process $t\mapsto W^{\epsilon}(x,t)$ is a $4$-dimensional Brownian motion.
Similarly, from $\sup(\rho)\subseteq B(0,1)$ it follows that
\begin{equation}
    \E\left[W^{\epsilon}(x,t)W^{\epsilon}(y,s)\right]=0 \,\,\text{ if }\,\, |x-y|>2\epsilon.
\end{equation}
Hence the processes $W^{\epsilon}(x,t)$ and $W^{\epsilon}(y,t)$ are independent for $|x-y|>2\epsilon$.
Furthermore, using \eqref{formula/epsilon rescaled white noise} one computes
\begin{align}
    \E\left[\left|W^{\epsilon}(x,t)-W^{\epsilon}(y,s)\right|^2\right]
    &\leq C\,
    \E\left[\left|W^{\epsilon}(x,t)-W^{\epsilon}(x,s)\right|^2+\left|W^{\epsilon}(x,s)-W^{\epsilon}(y,s)\right|^2\right]
    \\
    &\leq C\left(
    |t-s|+s\, C_\rho\, \epsilon^d\int_{\R^d}\left(\rho_\epsilon(z-x)-\rho_{\epsilon}(z-y)\right)^2\,dz\right)
    \\
    &\leq C
    \left(|t-s|+\frac{|x-y|^2}{\epsilon^2}\right),
\end{align}
for a constant $C=C(\rho)$.
Similar estimates hold for any higher moment $p\geq 2$ and the Kolmogorov continuity theorem ensures the existence of a suitable modification of $W^\epsilon$ with continuous trajectories in both $x$ and $t$.
In particular, we have that $W^{\epsilon}$ is jointly measurable in $(x,t)\in\R^d\times\R^+$ and in the sample path $\omega\in\Omega$.
Finally, for any $x,y\in\R^d$, a direct computation shows that the quadratic variation of the martingale $W^{\epsilon,\beta}(x,t)-W^{\epsilon,\beta}(y,t)$ satisfies
\begin{gather}\label{formula/epsilon convoluted noise quadratic variation}
\begin{aligned}
    \left[W^{\epsilon,\beta}(x,\cdot)-W^{\epsilon,\beta}(y,\cdot)\right]_t&=t\,C_\rho\, \epsilon^d\int_{\R^d}\left(\rho_\epsilon(z-x)-\rho_{\epsilon}(z-y)\right)^2\,dz,
    \\
    &\leq t\,
    C\,\frac{|x-y|^2}{\epsilon^2},
\end{aligned}
\end{gather}
for a constant $C=C(\rho)$.

Finally, $f(r,y,du)=\law_{\R^4}(\Bar{u}^{\epsilon}(y,r))$ should solve the associated nonlinear Fokker--Planck equation with no-flux boundary conditions,
\begin{equation}\label{abstract 4 model nonlinear fokker--planck}
    \begin{cases}
        \displaystyle
        \partial_tf(t,x,u)\!+\!\!\nabla_u\!\cdot\!\Big(b(x,t,u,f(t))f(t,x,u)\Big)\!=\frac{1}{2}\sum_{\beta=1}^4\frac{\partial^2}{\partial u^\beta\partial u^\beta}\Big(\sigma_\beta(x,t,u,f(t))^2f(t,x,u)\Big),
        \\
        \displaystyle
        b_\beta(t,x,u,f(t))f(t,x,u)\!-\frac{1}{2}\frac{\partial}{\partial u^\beta}\Big(\sigma_{\beta}(x,t,u,f(t))^2f(t,x,u)\Big)~\Big|_{u^\beta=0}\!\!\!\!\!\!\!=0\!\!\quad\text{for $\beta=1,2,3,4$},
    \end{cases}
\end{equation}
in the weak sense.

Let us now see how \eqref{drift term structure}--\eqref{diffusion term structure} and the assumptions \eqref{b_0 sigma_0 lipschitz property}--\eqref{b_1 sigma_1 sublinear property} translate into H\"older, Lipschitz and sublinear growth properties of the actual drift and diffusion terms.
First, notice that for any fixed $f\in\pr(Q\times\R^4)$ the mappings
\begin{equation}\label{lipschitz and holder properties for f fixed}
Q\times\R^+\times\R^4\to\R\quad\big|\quad x,r,u\mapsto b_{\beta}(x,r,u,f),\,\sigma_{\beta}(x,r,u,f),
\end{equation}
are easily seen to be $\alpha$-H\"older in $x$, Lipschitz and with sublinear growth in $u$, uniformly in $r$. Next, given a Banach space $X$ with norm $|\cdot|_X$ and a positive integer $m\in\N$, let us denote by $\pr_m(X)$ the space of probability measures on $X$ with finite $m$th moments, endowed with the $m$th order Wasserstein distance (see e.g. \cite{Villani}),
\begin{equation}
    \label{wasserstein distance}
    \w_m(X)(P_1,P_2)\coloneqq\inf_{\pi\in\Pi(P_1,P_2)}\left\{\int_X|x-y|_X^m\,\pi(dx,dy)\right\}^{\frac{1}{m}},
\end{equation}
where $\Pi(P_1,P_2)$ denotes the set of probability measures on $X\times X$ with marginals $P_1$ and $P_2$.
When $X$ is clear from the context we write $\w_m(P_1,P_2)$.
Let $L^{\infty}(Q;\pr_m(X))$ be the space of measurable functions $f:Q\to\pr_m(X)$ such that $$\displaystyle|f|_{L^{\infty}(Q;\pr_m(X))}=\sup_{y\in Q}\Big(\int_X|x|^m\,f(y,dx)\Big)^{\frac{1}{m}}<\infty,$$ endowed with the distance
\begin{equation}\nonumber
    d_{L^{\infty}(Q;\pr_m(X))}(f,g)=\sup_{y\in Q}\w_m(X)\big(f(y,dx),g(y,dx)\big).
\end{equation}
Assume $f,g\in L^{\infty}(Q;\pr_m(\R^4))$. Then we can identify them as elements in 
$\pr(Q\times\R^4)$ by their actions on test functions
\begin{equation}\nonumber
    \int_{Q\times\R^4}\psi(y,s)\,f(dy,ds):=\int_{Q}\int_{\R^4}\psi(y,s)\,f(y,ds)\,dy\qquad\forall\psi\in C_b(Q\times\R^4),
\end{equation}
and similarly for $g$.
Now, using the structure \eqref{drift term structure}--\eqref{diffusion term structure}, the H\"older, Lipschitz and sublinear growth properties of $b_i^\beta$ and $\sigma_i^\beta$ for $i=0,1$, and H\"older's inequality, it is straightforward to prove the following lemma.

\begin{lemma}\label{b sigma lipschitz and sublinear properties if disintegration}
In the setting outlined above, and under the assumptions on $b$ and $\sigma$,
\begin{align}
    \big|b_{\beta}(x,r,u,f)\!-\!b_{\beta}(x',r,u',f')\big|+&\big|\sigma_{\beta}(x,r,u,f)\!-\!\sigma_{\beta}(x',r,u',f')\big|\!
    \\
    &\leq\! L \left(|x-x'|^{\alpha}+|u-u'|\!+\!d_{L^{\infty}(Q;\pr_m(\R^4))}(f,f')\right), \nonumber
    \\
    \big|b_{\beta}(x,r,u,f)\big|+\big|\sigma_{\beta}(x,r,u,f)\big|&\leq C \left(1+|u|+|f|_{L^{\infty}(Q;\pr_m(\R^4))}\right), \nonumber
\end{align}
for all $x,x',u,u',r\in Q^2\times\left(\R^4\right)^2\times\R^+$ and all $f,f'\in L^{\infty}(Q;\pr_m(\R^4))$, for suitable constants $L$ and $C$.
\end{lemma}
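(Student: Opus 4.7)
The plan is to verify both inequalities term by term, using the structural decompositions \eqref{drift term structure}--\eqref{diffusion term structure} to reduce the claim to the hypotheses \eqref{b_0 sigma_0 lipschitz property}--\eqref{b_1 sigma_1 sublinear property} on the building blocks $b_0^\beta,\sigma_0^\beta,b_1^\beta,\sigma_1^\beta$, together with the Lipschitz property of the outer functions $\phi_{b_\beta},\phi_{\sigma_\beta}$. The contribution coming from the $b_0^\beta$ and $\sigma_0^\beta$ parts is immediate from the assumptions, so the real work is to control the integral terms against the measure argument and to relate them to the distance $d_{L^{\infty}(Q;\pr_m(\R^4))}$.

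The central step is to establish the \emph{Lipschitz-in-$f$} bound for the integral $I^\beta(x,r,u,f)\coloneqq\int_{Q\times\R^4}b_1^\beta(x,y,r,u,v)\,f(dy,dv)$, and similarly for $\sigma_1^\beta$. Using the disintegration $f(dy,dv)=dy\otimes f(y,dv)$ that is implicit in the identification made just before the lemma, I would write, for each fixed $y\in Q$, a coupling $\pi_y\in\Pi(f(y,\cdot),f'(y,\cdot))$ and then estimate
\begin{align*}
    \bigl|I^\beta(x,r,u,f)-I^\beta(x',r,u',f')\bigr|
    &\leq\int_Q\int_{\R^4\times\R^4}\bigl|b_1^\beta(x,y,r,u,v)-b_1^\beta(x',y,r,u',v')\bigr|\,\pi_y(dv,dv')\,dy.
\end{align*}
Invoking \eqref{b_1 sigma_1 lipschitz property}, the integrand is bounded by $L\bigl(|x-x'|^\alpha+|u-u'|+|v-v'|\bigr)$, so the $|x-x'|^\alpha$ and $|u-u'|$ pieces factor out, while the $|v-v'|$ piece becomes $\int_Q\int|v-v'|\,\pi_y\,dy$. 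By Hölder's inequality (since $m\geq1$) applied in the $v$-variable and then choosing $\pi_y$ to be the $\w_m$-optimal coupling, this is at most $\int_Q\w_m(f(y,\cdot),f'(y,\cdot))\,dy\leq |Q|\,d_{L^{\infty}(Q;\pr_m(\R^4))}(f,f')$. Passing through the Lipschitz function $\phi_{b_\beta}$ then yields the required Lipschitz bound for $b_\beta$; the identical argument works verbatim for $\sigma_\beta$.

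For the sublinear growth bound, I would apply \eqref{b_1 sigma_1 sublinear property} to estimate
\begin{equation*}
    \bigl|I^\beta(x,r,u,f)\bigr|\leq C\int_Q\int_{\R^4}(1+|u|+|v|)\,f(y,dv)\,dy\leq C\bigl(1+|u|\bigr)+C\int_Q\Bigl(\int_{\R^4}|v|^m\,f(y,dv)\Bigr)^{1/m}dy,
\end{equation*}
where the last step uses Jensen/Hölder on the probability measure $f(y,\cdot)$. The inner quantity is bounded uniformly in $y$ by $|f|_{L^{\infty}(Q;\pr_m(\R^4))}$, and since $|Q|=1$ one gets $|I^\beta(x,r,u,f)|\leq C(1+|u|+|f|_{L^{\infty}(Q;\pr_m(\R^4))})$. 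Sublinearity of $\phi_{b_\beta}$ (which follows automatically from its global Lipschitz character) together with \eqref{b_0 sigma_0 sublinear property} for the $b_0^\beta$ contribution delivers the desired bound on $b_\beta$, and the analogous reasoning handles $\sigma_\beta$.

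The only point that requires some care, and which I view as the main technical subtlety rather than a deep obstacle, is the passage from the two-measure integral $\int\cdots(f-f')$ to the $d_{L^{\infty}(Q;\pr_m)}$-distance: this is why it is essential to work with the disintegrated representation and to couple $f(y,\cdot)$ with $f'(y,\cdot)$ \emph{pointwise} in $y$ before integrating in $dy$, rather than trying to couple the two measures directly on $Q\times\R^4$ (which would bring in the $\w_m$-distance on $\pr_m(Q\times\R^4)$, a strictly coarser quantity).
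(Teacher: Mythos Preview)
Your proposal is correct and follows exactly the approach the paper indicates: it states just before the lemma that the result is ``straightforward'' using the structure \eqref{drift term structure}--\eqref{diffusion term structure}, the H\"older, Lipschitz and sublinear growth properties of $b_i^\beta$ and $\sigma_i^\beta$, and H\"older's inequality, without spelling out further details. Your write-up supplies precisely those details, including the key step of coupling $f(y,\cdot)$ and $f'(y,\cdot)$ pointwise in $y$ and then applying H\"older to pass from $\w_1$ to $\w_m$.
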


\subsection{Main results} 
\label{main results}

We now present the main results of this work.
The theorems are stated for the general models \eqref{abstract 4 model particle system}, \eqref{abstract 4 model McKean--Vlasov}, and \eqref{abstract 4 model nonlinear fokker--planck}.
First we present a result on existence and uniqueness of the particle systems, which is proved in Section \ref{Strong existence and uniqueness for the particle systems section}.

\begin{theorem}[Strong existence and uniqueness for the particle systems]\label{Strong existence and uniqueness for the particle systems}
Assume that the initial data satisfies $\sup_{1\leq i\leq \ngrid}\sup_{1\leq k\leq \ndelta}\E[|u_{ik}(0)|^{2}]<+\infty$. Then, under assumptions \eqref{drift term structure}--\eqref{b_1 sigma_1 sublinear property} on the coefficients, there exists a pathwise unique solution of the particle system \eqref{abstract 4 model particle system}.
\end{theorem}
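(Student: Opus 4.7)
The plan is to view the particle system \eqref{abstract 4 model particle system} as a single reflected SDE in the high dimensional space $(\R^4)^{\ngrid\ndelta}$, with reflection in the convex polyhedral domain $D=(\R_+^4)^{\ngrid\ndelta}$, and then to invoke the existence and uniqueness theory of Lions--Sznitman \cite{Lions-Sznitman-1984-SDEreflectingBC}. To do this I would stack the particles into $U=(u_{ik})_{i,k}\in(\R^4)^{\ngrid\ndelta}$, the driving noises into $\mathbf{W}=(W_{ik})_{i,k}$, and the reflection terms into $\mathbf{L}=(\ell_{ik})_{i,k}$. Because the grid points $x_j$ are fixed, the empirical measure becomes a deterministic function of the state, $f_{\ngrid,\ndelta}[U]=\tfrac{1}{\ngrid\ndelta}\sum_{j,m}\delta_{(x_j,u_{jm})}$, and the system can be rewritten as $dU=B(r,U)\,dr+\Sigma(r,U)\,d\mathbf{W}-d\mathbf{L}$, with the $(ik)$-component of $B$ given by $b(x_i,r,u_{ik},f_{\ngrid,\ndelta}[U])$, and similarly for $\Sigma$.

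The next step is to check that $B$ and $\Sigma$ are globally Lipschitz and of linear growth on $(\R^4)^{\ngrid\ndelta}$, with constants allowed to depend on $\ngrid$ and $\ndelta$. The structure \eqref{drift term structure}--\eqref{diffusion term structure} collapses the mean-field integrals into finite sums
\begin{equation*}
    \int_{Q\times\R^4}\!b_1^\beta(x_i,y,r,u_{ik},v)\,f_{\ngrid,\ndelta}[U](dy,dv)=\frac{1}{\ngrid\ndelta}\sum_{j,m}b_1^\beta(x_i,x_j,r,u_{ik},u_{jm}),
\end{equation*}
and likewise for $\sigma_1^\beta$. Combining the Lipschitz and growth bounds \eqref{b_0 sigma_0 lipschitz property}--\eqref{b_1 sigma_1 sublinear property} with the global Lipschitz property of the outer scalar functions $\phi_{b_\beta}$ and $\phi_{\sigma_\beta}$, and observing that $\frac{1}{\ngrid\ndelta}\sum_{j,m}|u_{jm}-u'_{jm}|\leq|U-U'|$, one obtains
\begin{equation*}
    |B(r,U)-B(r,U')|+|\Sigma(r,U)-\Sigma(r,U')|\leq C_{\ngrid,\ndelta}\,|U-U'|,
\end{equation*}
together with the analogous linear growth bound $|B(r,U)|+|\Sigma(r,U)|\leq C_{\ngrid,\ndelta}(1+|U|)$.

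Finally I would apply the reflected SDE theory of \cite{Lions-Sznitman-1984-SDEreflectingBC} on the convex polyhedral domain $D=(\R_+^4)^{\ngrid\ndelta}$: on each face the admissible inward reflection direction is the single coordinate axis corresponding to the vanishing component $u_{ik}^\beta$, so the Skorokhod problem decouples across coordinates into independent one-dimensional reflection problems at zero, yielding exactly the processes $\ell_{ik}^\beta$ described by \eqref{reflection term concrete model particle system}. With $B,\Sigma$ globally Lipschitz of linear growth and with $\sup_{i,k}\E[|u_{ik}(0)|^{2}]<+\infty$, a standard Picard iteration composed with the Skorokhod map produces a pathwise unique solution $(U,\mathbf{L})$ on every finite interval $[0,T]$, which gives the claim. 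The main obstacle to watch is the Lipschitz dependence of $B$ on $U$ through the probability measure $f_{\ngrid,\ndelta}[U]$: I would \emph{not} route this through Lemma~\ref{b sigma lipschitz and sublinear properties if disintegration}, since the empirical measure does not admit an $L^{\infty}(Q;\pr_m(\R^4))$ disintegration (its $Q$-support is discrete), but the finite-sum identity above bypasses the issue completely, reducing everything to the classical reflected SDE theory in a convex domain.
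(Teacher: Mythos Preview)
Your proposal is correct and is arguably the most economical way to dispatch this theorem in isolation: once the particles are stacked into a single state $U\in(\R^4)^{\ngrid\ndelta}$ and the empirical measure is recognized as a Lipschitz function of $U$ via the finite-sum identity, the problem becomes a standard reflected SDE in the orthant with globally Lipschitz coefficients, and the coordinate-wise decoupling of the Skorokhod map makes the boundary issues trivial. Your remark that Lemma~\ref{b sigma lipschitz and sublinear properties if disintegration} is unavailable here (the empirical measure has no $L^{\infty}(Q;\pr_m(\R^4))$ density) is well taken and your workaround is the right one.

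The paper proceeds differently: it sets up a contraction map $F:(H_T^2)^{\ngrid\ndelta}\to(H_T^2)^{\ngrid\ndelta}$ that freezes the empirical measure computed from the \emph{input} $v_{ik}$, solves the resulting decoupled reflected SDEs, and then shows $F$ contracts for short time via It\^o's formula applied to $|\Tilde{u}_{ik}-\Tilde{v}_{ik}|^2$. The key computation there is that the cross term $\int(\Tilde{u}_{ik}-\Tilde{v}_{ik})(d\ell^v_{ik}-d\ell^u_{ik})$ is nonpositive, which kills the reflection contribution. This is more laborious than your reduction, but the paper reuses exactly this device---It\^o on a squared difference, drop the reflection term by sign, BDG, Gr\"onwall---in every subsequent proof (the McKean--Vlasov contraction, the H\"older regularity, and the mean-field error estimate). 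So the paper's approach buys a template; yours buys brevity for this single statement.
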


Next we state the theorems on well-posedness of the McKean--Vlasov equations and the associated PDE.
The following two results are proved in Section \ref{the limiting model}.

\begin{theorem}[Strong existence and uniqueness of the McKean--Vlasov equation]\label{Strong existence and uniqueness for the McKean--Vlasov}
Under the assumptions \eqref{drift term structure}--\eqref{b_1 sigma_1 sublinear property} on the coefficients, for any initial data $u(\cdot,0)\in L^{\infty}(Q;L^2(\Omega))$,  and for any $\epsilon>0$, there exists a pathwise unique solution $u^{\epsilon}\in L^{\infty}(Q;L^2(\Omega;C([0,T];\R^4)))$ of the McKean--Vlasov equation \eqref{abstract 4 model McKean--Vlasov}.
Moreover, for every $T<\infty$,
\begin{equation}
\label{Strong existence and uniqueness for the McKean--Vlasov eq 1}
    \sup_{x\in Q}\E\left[\sup_{t\in[0,T]}|u^\epsilon(x,t)|^{2}\right]
    \leq
    C\left(1+\sup_{x\in Q}\E[|u(x,0)|^{2}]\right),
\end{equation}
for a constant $C = C(T,b,\sigma)$.
 Finally, if the initial data satisfies $u(\cdot,0)\in C^{\alpha}(Q;L^2(\Omega))$ for some $\alpha\in(0,1)$, then $u^{\epsilon}\in C^{\alpha}(Q;L^2(\Omega;C([0,T];\R^4)))$.
\end{theorem}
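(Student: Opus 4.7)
The plan is to run a Picard iteration on the space $E_T := L^\infty(Q;\pr_2(C([0,T];\R^4)))$ equipped with the distance $d_{E_T}(f,g) := \sup_{x\in Q}\w_2(f(x,\cdot),g(x,\cdot))$, where the Wasserstein distance uses the sup-norm metric on $C([0,T];\R^4)$. The decisive feature enabling this strategy, in contrast to the ill-posedness described in Remark~\ref{ill-posedeness of the uncorrelated mckean-vlasov equation}, is that $W^\epsilon$ is jointly measurable in $(x,\omega)$ and H\"older-continuous in $x$ (as recorded after \eqref{formula/epsilon convoluted noise quadratic variation}); this is precisely what the construction \eqref{formula/epsilon convoluted and rescaled noise} was designed to deliver.

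Given $f\in E_T$, let $\Pi_t f$ denote its time-$t$ marginal, viewed as an element of $L^\infty(Q;\pr_2(\R^4))$ and then lifted to $\pr(Q\times\R^4)$ via \eqref{McKean--Vlasov law on Q times R^4}. For each fixed $x\in Q$, I would first solve the pointwise reflected SDE with coefficients $b(x,\cdot,\cdot,\Pi_\cdot f)$ and $\sigma(x,\cdot,\cdot,\Pi_\cdot f)$, noise $W^\epsilon(x,\cdot)$, and initial datum $u(x,0)$. By Lemma~\ref{b sigma lipschitz and sublinear properties if disintegration} the coefficients are globally Lipschitz and of sublinear growth in $u$, so the Lions--Sznitman theory \cite{Lions-Sznitman-1984-SDEreflectingBC} yields a pathwise unique strong solution $u^f(x,\cdot)\geq 0$ together with the reflection term $\bar\ell(x,\cdot)$, and the map
\begin{equation*}
    \Psi:E_T \to E_T, \qquad \Psi(f)(x) := \law(u^f(x,\cdot)),
\end{equation*}
is well defined. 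Measurability of $x\mapsto u^f(x,\cdot)$ follows from the H\"older continuity in $x$ of both the coefficients and $W^\epsilon$, combined with the Lipschitz continuity of the Skorokhod map in the sup-norm topology. The moment bound \eqref{Strong existence and uniqueness for the McKean--Vlasov eq 1} is then a standard consequence of It\^o's formula applied to $|u^f(x,t)|^2$, the BDG inequality, the sublinear-growth estimate of Lemma~\ref{b sigma lipschitz and sublinear properties if disintegration}, and Gronwall's lemma, using that the reflection contributes nonpositively since $\bar u^\epsilon_\beta\geq 0$ while $\bar\ell^\beta$ is nonincreasing with support in $\{\bar u^\epsilon_\beta=0\}$.

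For the contraction step, given $f,g\in E_T$ I would couple $u^f(x,\cdot)$ and $u^g(x,\cdot)$ by driving them with the very same noise $W^\epsilon(x,\cdot)$. The Lions--Sznitman pathwise stability estimate for the Skorokhod map reduces the problem to estimating the sup-norm difference of the unreflected It\^o integrals; combining It\^o/BDG with the Lipschitz bound of Lemma~\ref{b sigma lipschitz and sublinear properties if disintegration} and Gronwall yields, pointwise in $x$,
\begin{equation*}
    \E\!\left[\sup_{r \leq T}|u^f(x,r) - u^g(x,r)|^2\right] \leq C T e^{CT} \, d_{E_T}(f,g)^2.
\end{equation*}
Taking the supremum in $x$ gives $d_{E_T}(\Psi(f),\Psi(g))\leq (CTe^{CT})^{1/2} d_{E_T}(f,g)$, which is a strict contraction for $T_0$ small. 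Existence and uniqueness on $[0,T_0]$ follow, and the moment bound above lets me reinitialize and iterate on intervals of length $T_0$ to cover any $[0,T]$. The same coupling/Gronwall estimate applied to two arbitrary strong solutions yields pathwise uniqueness on the full horizon.

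Finally, for the H\"older regularity when $u(\cdot,0)\in C^\alpha(Q;L^2(\Omega))$, I would estimate $\E[\sup_{t\leq T}|u^\epsilon(x,t)-u^\epsilon(y,t)|^2]$ with the same Skorokhod stability/Gronwall machinery applied to the two equations at $x$ and $y$. The initial datum and the coefficients contribute $O(|x-y|^{2\alpha})$ via \eqref{b_0 sigma_0 lipschitz property}--\eqref{b_1 sigma_1 lipschitz property}, while the noise difference contributes $CT |x-y|^2/\epsilon^2$ by the quadratic variation bound \eqref{formula/epsilon convoluted noise quadratic variation}; since $Q$ is bounded and $\alpha\in(0,1)$, the last term is absorbed into an $O(|x-y|^{2\alpha})$ bound with an $\epsilon$-dependent constant. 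The main obstacle I anticipate is the contraction step itself: the estimate must be carried out pointwise in $x$ because the coupling only makes sense at a fixed $x$, and the $\sup_x$ uniformity hinges on the constants in Lemma~\ref{b sigma lipschitz and sublinear properties if disintegration} and in the Lions--Sznitman Skorokhod estimate being independent of $x$. A closely related subtlety is the verification of joint measurability of $(x,\omega)\mapsto u^f(x,\omega,\cdot)$, which forces one to work with modifications continuous in both $t$ and $x$.
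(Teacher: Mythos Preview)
Your proposal is correct and follows essentially the same fixed-point/coupling/Gronwall strategy as the paper. The one technical difference worth flagging is the handling of the reflection term in the contraction and H\"older estimates: you invoke the Lipschitz stability of the Skorokhod map to reduce matters to the unreflected driving paths, whereas the paper applies It\^o's formula directly to $|S^\epsilon[f](x,t)-S^\epsilon[g](x,t)|^2$ and simply drops the reflection contribution after observing that it is nonpositive (on $\{S^\epsilon[f]^\beta=0\}$ the integrand reduces to $-S^\epsilon[g]^\beta\,d|\ell^\beta[f]|\leq 0$, and symmetrically). Both devices close the estimate; interestingly, for the a priori moment bound the paper does use your Skorokhod-map control of $|\ell|$ rather than the sign trick. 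The paper also runs the iteration on the slightly simpler space $L^\infty(Q;C([0,T];\pr_2(\R^4)))$ of time-marginal curves rather than on your path-law space, but since your coupling already controls the sup-in-time norm this is cosmetic.
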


\begin{theorem}[Well-posedness of the non-linear Fokker--Planck equation]\label{Well-posedness of the non-linear fokker--planck equations}
Under the assumptions \eqref{drift term structure}--\eqref{b_1 sigma_1 sublinear property} on the coefficients,  for any initial data $f_0(x,du)\in L^{\infty}(Q;\pr_2(\R^4))$, there exists a weak solution $f(x,t,du)\in L^{\infty}(Q;C([0,\infty);\pr_2(\R^4)))$ of the non-linear Fokker--Planck equation \eqref{abstract 4 model nonlinear fokker--planck}.
If $|\sigma(x,t,u,g)|\geq c>0$ for every $x,t,u$ and $g$, the solution is also unique.
 The map $f$ is uniquely characterized as $f(t,x,du)=\law_{\R^4}\left(\bar{u}^{\epsilon}(x,t)\right)$ for any arbitrary $\epsilon>0$.
Moreover, for each fixed $x\in Q$ and for any time $T>0$, the restriction $f(x,t,du)|_{t\in[0,T]}$ can be seen as a probability measure on the space $C([0,T];\R^4)$ of continuous paths, and it satisfies 
\begin{equation}\label{Well-posedness of the non-linear fokker--planck equations eq 1}
    \sup_{x\in Q}\int_{C([0,T];\R^4)}\sup_{t\in[0,T]}|v(t)|^{2}\,f(x,dv)
    \leq
    C\left(1+\sup_{x\in Q}\E[|u(x,0)|^{2}]\right),
\end{equation}
for a constant $C = C(T,b,\sigma)$, where $v\in C([0,T];\R^4)$.
 Finally, if $f_0\in C^{\alpha}(Q;\pr_2(\R^4))$, then $f\in C^{\alpha}\left(Q;\pr_2\left(C([0,T];\R^4)\right)\right)$, that is to say
\begin{equation}\label{Well-posedness of the non-linear fokker--planck equations eq 2}
    \w_2\left(C([0,T];\R^4)\right)(f(x,\cdot),f(y,\cdot))\leq C\,|x-y|^\alpha\quad\forall x,y\in Q,
\end{equation}
for a constant $C=C(T,b,\sigma,f_0)$.
\end{theorem}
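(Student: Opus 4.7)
The plan is to build $f$ out of the McKean--Vlasov solution of Theorem \ref{Strong existence and uniqueness for the McKean--Vlasov} and then read the four conclusions (existence, moment bound, Hölder regularity, uniqueness/characterization) off of that construction. Fix any $\epsilon>0$, take any initial datum $u(\cdot,0)\in L^\infty(Q;L^2(\Omega))$ with $\law_{\R^4}(u(x,0))=f_0(x,\cdot)$ (possible because $f_0\in L^\infty(Q;\pr_2(\R^4))$), and let $\bar u^\epsilon$ be the unique solution provided by Theorem \ref{Strong existence and uniqueness for the McKean--Vlasov}. Define
\begin{equation}
f(t,x,du):=\law_{\R^4}\bigl(\bar u^\epsilon(x,t)\bigr).
\end{equation}
To check the weak formulation of \eqref{abstract 4 model nonlinear fokker--planck}, I would apply Itô's formula to $\varphi(\bar u^\epsilon(x,t))$ for $\varphi\in C_c^2(\R^4)$ satisfying the compatibility condition $\partial_{u^\beta}\varphi\equiv 0$ on $\{u^\beta=0\}$; these are precisely the test functions for which the reflection integral $\int_0^t\partial_{u^\beta}\varphi\,d\bar\ell^\beta$ vanishes (the measure $d|\bar\ell^\beta|$ is supported on $\{(\bar u^\epsilon)^\beta=0\}$). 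Taking expectation kills the stochastic integrals, integrating against a smooth weight in $x$ on $Q$ and using Fubini yields exactly the weak formulation, with the no-flux boundary terms absorbed by the compatibility condition on the test functions. Continuity of $t\mapsto f(t,x,\cdot)$ in $\pr_2(\R^4)$, uniformly in $x$, follows from the sample-path continuity and the $L^2$ moment bound on $\bar u^\epsilon$.

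The moment bound \eqref{Well-posedness of the non-linear fokker--planck equations eq 1} is an immediate rewriting of \eqref{Strong existence and uniqueness for the McKean--Vlasov eq 1}: viewing $f(x,\cdot)$ restricted to $[0,T]$ as a probability on $C([0,T];\R^4)$ (the pushforward under $\omega\mapsto\bar u^\epsilon(x,\cdot)(\omega)$), one has $\int\sup_t|v(t)|^2 f(x,dv)=\E[\sup_t|\bar u^\epsilon(x,t)|^2]$. For the Hölder estimate \eqref{Well-posedness of the non-linear fokker--planck equations eq 2}, I would use the canonical coupling $(\bar u^\epsilon(x,\cdot),\bar u^\epsilon(y,\cdot))$ on the same probability space; this gives
\begin{equation}
\w_2\bigl(C([0,T];\R^4)\bigr)\bigl(f(x,\cdot),f(y,\cdot)\bigr)^2\le \E\Bigl[\sup_{t\in[0,T]}|\bar u^\epsilon(x,t)-\bar u^\epsilon(y,t)|^2\Bigr]\le C\,|x-y|^{2\alpha},
\end{equation}
the last inequality being the $C^\alpha(Q;L^2(\Omega;C_T))$ regularity from the last sentence of Theorem \ref{Strong existence and uniqueness for the McKean--Vlasov}.

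For uniqueness under the non-degeneracy condition $|\sigma|\ge c>0$, the idea is to go back from the PDE to the McKean--Vlasov SDE by a superposition argument and then invoke pathwise uniqueness. Given a weak solution $f$ with initial datum $f_0$, freeze the coefficients $\tilde b(x,t,u):=b(x,t,u,f(t))$, $\tilde\sigma(x,t,u):=\sigma(x,t,u,f(t))$; then for each $x\in Q$ the marginal $f(\cdot,x,\cdot)$ solves, in the weak sense, the \emph{linear} reflected Fokker--Planck equation with these coefficients. Under non-degeneracy and the Lipschitz bounds of Lemma \ref{b sigma lipschitz and sublinear properties if disintegration}, the Figalli--Trevisan superposition theorem (in its reflected-boundary variant) produces, on a suitable probability space, a reflected process $v^\epsilon(x,\cdot)$ driven by $W^\epsilon(x,\cdot)$, with initial law $f_0(x,\cdot)$ and time-marginals equal to $f(t,x,\cdot)$, solving the reflected SDE with coefficients $\tilde b,\tilde\sigma$. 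But then $v^\epsilon$ solves the nonlinear McKean--Vlasov equation \eqref{abstract 4 model McKean--Vlasov}, so by the uniqueness half of Theorem \ref{Strong existence and uniqueness for the McKean--Vlasov}, $v^\epsilon\stackrel{\mathrm d}{=}\bar u^\epsilon$, forcing $f(t,x,\cdot)=\law_{\R^4}(\bar u^\epsilon(x,t))$. This simultaneously proves uniqueness of $f$ and the $\epsilon$-independent characterization, since it applies to every $\epsilon>0$.

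The main obstacle I expect is the superposition step, for two reasons. First, standard statements of Figalli's and Trevisan's theorems are given for SDEs in $\R^d$ with non-degenerate diffusion, and one needs the variant adapted to obliquely reflected diffusions on the orthant $\R_+^4$ with the no-flux boundary condition built in; verifying that the integrability needed to apply superposition (coming from the moment bound) suffices in the presence of reflection requires some care. Second, the construction must be carried out \emph{uniformly in the parameter} $x\in Q$ (with the correct joint measurability in $(x,\omega)$) so that the resulting process can be fed back into the McKean--Vlasov equation; this is exactly the measurability subtlety flagged in Remark \ref{ill-posedeness of the uncorrelated mckean-vlasov equation}, which is the reason we work at fixed $\epsilon>0$. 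The remaining pieces (Itô with reflection, moment propagation, the coupling estimate for the $\w_2$ Hölder bound) are routine consequences of Theorem \ref{Strong existence and uniqueness for the McKean--Vlasov}.
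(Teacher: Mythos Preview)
Your construction of the solution, the verification via It\^o's formula on Neumann test functions, the moment bound, and the H\"older estimate via the canonical coupling all match the paper's argument essentially verbatim.

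Where you diverge is the uniqueness step. You propose to pass from a given weak PDE solution $g$ back to an SDE solution by a Figalli--Trevisan superposition principle for reflected diffusions, and then invoke the McKean--Vlasov uniqueness of Theorem~\ref{Strong existence and uniqueness for the McKean--Vlasov}. The paper avoids superposition entirely: given a weak solution $g$, it simply solves the reflected SDE \eqref{solution map S} with coefficients frozen at $g$, i.e.\ it forms $S^\epsilon[g]$. Then $h\coloneqq L\circ S^\epsilon[g]$ is shown, by the same It\^o computation, to solve the \emph{linear} Fokker--Planck equation with $g$ fixed in the coefficients. Uniqueness for that linear equation is obtained by a short duality argument against solutions of the backward Kolmogorov equation with Neumann boundary conditions (this is where non-degeneracy $|\sigma|\ge c>0$ enters, to guarantee solvability of the dual). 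Since $g$ itself also solves the linear equation, one concludes $g=L\circ S^\epsilon[g]$, so $g$ is a fixed point of the contraction $L\circ S^\epsilon$ and hence equals $f^\epsilon$. The $\epsilon$-independence then follows by taking $g=f^{\tilde\epsilon}$.

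The paper's route is strictly more elementary and sidesteps both obstacles you flag: no reflected-boundary superposition principle is needed, and no $x$-measurable selection of a process from the PDE solution is required, because the process $S^\epsilon[g]$ is constructed directly from the already-available noise $W^\epsilon$ and initial data. Your approach, if carried out, would additionally need a Yamada--Watanabe type statement for the reflected McKean--Vlasov SDE to upgrade the weak solution produced by superposition to the setting where Theorem~\ref{Strong existence and uniqueness for the McKean--Vlasov} applies; this is another layer the paper's argument does not require.
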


We finally present two statements concerning the convergence of the particle system towards the limiting model as $M, N\to\infty$.
The setting is the following.
For $k\in\N$, let $(W_k(x,t))_{x\in Q,t\geq 0}$ be independent $4$-dimensional space-time white noise terms  over $Q\times[0,\infty)$, which we then convolve and rescale to obtain $W_k^{\epsilon}$ as in formula \eqref{formula/epsilon convoluted and rescaled noise}.
 Similarly, let $u_k(\cdot,0)\in C^{\alpha}(Q;L^2(\Omega))$ be i.i.d. families of random initial conditions along the cortex $Q$, and let them be independent of all the white noise terms.
Finally, let $X_i$ for $i=1,\dots,N$ be points on a equispaced grid on $Q$, with squares of sidelength $N^{\frac{1}{d}}$.
More details on the setting and the proofs of the results are given in Sections \ref{Comparison between the particle system and the limiting model} and \ref{Convergence of empirical measures}.

\begin{theorem}[Mean squared error estimates for actual particles vs. McKean--Vlasov particles]\label{Mean squared error estimates for actual particles vs McKean--Vlasov particles}
In the setting  outlined above and in Theorems \ref{Strong existence and uniqueness for the particle systems} and \ref{Strong existence and uniqueness for the McKean--Vlasov}, for any $N,\ndelta\in\N$, let $u_{ik}^{\epsilon}(\cdot)$ be the solution of the particle system \eqref{abstract 4 model particle system}, with initial data $u_{ik}(0)\coloneqq u_k(X_i,0)$ and noise terms $W_{ik}(t)\coloneqq W_k^{\epsilon}(X_i,t)$. 
For each $k\in\N$, let $\Bar{u}_k^{\epsilon}(x,t)$ be the solution of the McKean--Vlasov equation \eqref{abstract 4 model McKean--Vlasov}, with initial data $(u_k(x,0))_{x\in Q}$ and rescaled $\epsilon$-correlated noise $(W_k^{\epsilon}(x,t))_{x\in Q,t\geq 0}$.
For each $i\in\N$, denote $\Bar{u}_{ik}^{\epsilon}(t)\coloneqq \Bar{u}_k^{\epsilon}(X_i,t)$.
Then, for any $T>0$,
\begin{equation}\label{Mean squared error estimates for actual particles vs McKean--Vlasov particles eq 1}
    \E\left[\,\sup_{r\in[0,t]}|u_{ik}^{\epsilon}(r)-\Bar{u}_{ik}^{\epsilon}(r)|^{2}\right]^{\nicefrac{1}{2}}\leq C \,t\,\left(\frac{1}{\ngrid^{\frac{\alpha}{d}}}+\frac{1}{\ndelta^{\frac{1}{2}}}\right)\left(1+\sup_{x\in Q}\E\left[|u_k(x,0)|^{2}\right]^{\nicefrac{1}{2}}\right),
\end{equation}
for any $i=1,\dots,\ngrid$, $k=1,\dots,\ndelta$ and $t\in[0,T]$, where $C = C(T,\rho,b,\sigma,[u(\cdot,0)]_{\alpha})$ and $[u(\cdot,0)]_{\alpha}$ denotes the H\"older seminorm of $u(\cdot,0)$.
\end{theorem}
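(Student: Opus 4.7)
The plan is to form the difference $\Delta_{ik}(t):=u_{ik}^{\epsilon}(t)-\bar u_{ik}^{\epsilon}(t)$ and exploit that, by construction, both processes are driven by the same Brownian path $W_k^{\epsilon}(X_i,\cdot)$ and start from the same datum $u_k(X_i,0)$; only the measure arguments of $b,\sigma$ and the two reflection terms differ. Applying It\^o's formula to $|\Delta_{ik}|^2$, the reflection cross-terms $-2\Delta_{ik}^\beta(d\ell_{ik}^{\epsilon,\beta}-d\bar\ell_{ik}^{\epsilon,\beta})$ are nonpositive by standard Skorokhod monotonicity: on the support of $d\ell_{ik}^{\epsilon,\beta}$ one has $u_{ik}^{\epsilon,\beta}=0$ and $\bar u_{ik}^{\epsilon,\beta}\ge 0$, so $-2\Delta_{ik}^\beta\,d\ell_{ik}^{\epsilon,\beta}=2\bar u_{ik}^{\epsilon,\beta}\,d\ell_{ik}^{\epsilon,\beta}\le 0$, and symmetrically for the other term. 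Discarding these contributions and applying Burkholder--Davis--Gundy to the stochastic integral, I would reduce to a Gronwall-type inequality of the shape
\begin{equation*}
\E\bigl[\sup_{s\le t}|\Delta_{ik}(s)|^2\bigr]\le C\int_0^t\E\bigl[|\Delta_{ik}(r)|^2\bigr]dr+C\int_0^t\E\bigl[\mathcal{E}_{\ngrid,\ndelta}(r)^2\bigr]dr,
\end{equation*}
where $\mathcal{E}_{\ngrid,\ndelta}(r)$ collects the discrepancies between $b$ and $\sigma$ evaluated at $f_{\ngrid,\ndelta}(r)$ and at $f(r)$, with the common $u$-argument $\bar u_{ik}^{\epsilon}$ factored out via Lemma~\ref{b sigma lipschitz and sublinear properties if disintegration}.

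\textbf{Splitting the empirical-measure error.} Exploiting the structural decomposition \eqref{drift term structure}--\eqref{diffusion term structure} and the Lipschitz property of $\phi_{b_\beta},\phi_{\sigma_\beta}$, it suffices to estimate
\begin{equation*}
\tfrac{1}{\ngrid\ndelta}\sum_{j,m}b_1^\beta(X_i,X_j,r,\bar u_{ik}^{\epsilon},u_{jm}^{\epsilon}(r))-\int_{Q\times\R^4}b_1^\beta(X_i,y,r,\bar u_{ik}^{\epsilon},v)f(r,dy,dv),
\end{equation*}
and the analogous expression for $\sigma_1^\beta$. I would decompose this difference as $(A)+(B)+(C)$, where $(A)$ replaces each $u_{jm}^{\epsilon}(r)$ by $\bar u_{jm}^{\epsilon}(r)$ in the empirical sum, $(B)$ replaces, at each column $j$, the column average $\tfrac1\ndelta\sum_m b_1^\beta(\cdot,\bar u_{jm}^{\epsilon})$ by the exact integral $\int b_1^\beta(\cdot,v)f(r,X_j,dv)$, and $(C)$ replaces the column-grid average $\tfrac1\ngrid\sum_j$ by $\int_Q dy$. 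Piece $(A)$ is controlled by the Lipschitz hypothesis \eqref{b_1 sigma_1 lipschitz property}, giving the bound $\tfrac{L}{\ngrid\ndelta}\sum_{j,m}|\Delta_{jm}(r)|$, which after squaring by Cauchy--Schwarz feeds into the Gronwall loop once $\sup_{i,k}$ is taken. Piece $(B)$ is a Monte Carlo estimate: for fixed $j$ the variables $\{\bar u_{jm}^{\epsilon}(r)\}_m$ are i.i.d.\ of law $f(r,X_j,\cdot)$ in $m$ and, since the initial data and noise families are independent in $m$, they are independent of $\bar u_{ik}^{\epsilon}$ for $m\neq k$; conditioning on the $k$-th family, the sublinearity \eqref{b_1 sigma_1 sublinear property} together with the uniform second-moment bound \eqref{Strong existence and uniqueness for the McKean--Vlasov eq 1} yields $\E[|(B)|^2]\le C/\ndelta$, with the excluded $m=k$ term contributing another $O(1/\ndelta)$.

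\textbf{The Riemann-sum piece $(C)$, the main obstacle.} For a frozen realization of $\bar u_{ik}^{\epsilon}$, the map $y\mapsto g(y):=\int_{\R^4}b_1^\beta(X_i,y,r,\bar u_{ik}^{\epsilon},v)f(r,y,dv)$ must be shown to be $\alpha$-H\"older on $Q$. For this I would split $g(y)-g(y')$ into a piece where the integrand is compared for fixed $v$, bounded by \eqref{b_1 sigma_1 lipschitz property}, and a piece where the measure $f(r,\cdot,dv)$ is compared at $y$ versus $y'$, bounded via Kantorovich--Rubinstein duality together with the spatial $\alpha$-H\"older regularity of $y\mapsto f(r,y,\cdot)$ in $\w_2$ supplied by Theorem~\ref{Well-posedness of the non-linear fokker--planck equations}; this step is precisely what forces the assumption $u_k(\cdot,0)\in C^\alpha(Q;L^2(\Omega))$ and accounts for the $[u(\cdot,0)]_\alpha$ dependence in the final constant. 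A direct midpoint Riemann-sum argument on the equispaced grid of mesh $\ngrid^{-1/d}$ then yields $|(C)|\le C\ngrid^{-\alpha/d}$.

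\textbf{Closing the loop.} Collecting $(A)$, $(B)$, $(C)$, taking $\sup_{i,k}$ on both sides, and applying Gronwall's lemma on $[0,T]$ yields
\begin{equation*}
\sup_{i,k}\E\bigl[\sup_{s\le t}|\Delta_{ik}(s)|^2\bigr]\le C(T)\Bigl(\frac{1}{\ngrid^{2\alpha/d}}+\frac{1}{\ndelta}\Bigr)\Bigl(1+\sup_{x\in Q}\E[|u_k(x,0)|^2]\Bigr),
\end{equation*}
with $C(T)$ absorbing the Gronwall exponential and the moment bounds. Taking square roots gives \eqref{Mean squared error estimates for actual particles vs McKean--Vlasov particles eq 1} up to the precise form of the $t$-prefactor. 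The genuine difficulties lie in $(C)$, where one must leverage the Wasserstein-H\"older continuity of the McKean--Vlasov law from Theorem~\ref{Well-posedness of the non-linear fokker--planck equations}, and in the partial-exchangeability subtlety in $(B)$ (the particles are exchangeable only in the $k$-index, so one must carefully condition on the $k$-th noise/initial-data family before invoking a variance estimate).
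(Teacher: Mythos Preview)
Your proposal is correct and follows essentially the same route as the paper: It\^o's formula on $|\Delta_{ik}|^2$, the Skorokhod monotonicity to drop the reflection cross-terms, BDG plus Gr\"onwall to reduce to the measure discrepancy, and then the same three-way split $(A)+(B)+(C)$ into a Lipschitz piece feeding back into the loop, a column-wise law-of-large-numbers piece controlled by conditioning on $\bar u_{ik}^\epsilon$, and a Riemann-sum piece handled via the $\alpha$-H\"older regularity of $x\mapsto f(r,x,\cdot)$ in $\w_2$ from Theorem~\ref{Well-posedness of the non-linear fokker--planck equations}. The only cosmetic difference is that you close the Gr\"onwall loop by taking $\sup_{i,k}$ directly, whereas the paper first averages over $i,k$ to obtain a bound on $\tfrac{1}{MN}\sum_{i,k}\E[\sup_s|\Delta_{ik}|^2]$ and then feeds this averaged bound back into the individual estimate; both variants work and yield the same rate.
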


We notice that the decay has rate $ \left(\unitfrac{1}{N^{\frac{\alpha}{d}}}+\unitfrac{1}{M^{\frac{1}{2}}}\right)$ instead of the usual $\unitfrac{1}{(MN)^{\unitfrac[]{1}{2}}}$, as we might expect according to classical results in mean field theory \cite{Sznitman1991} since we have $MN$ particles. 
As anticipated in Remark \ref{loss of indepence}, this phenomenon goes back to the fact that the particles $u_{ik}$ are exchangeable in the second index only. Hence, what we will get is a mean field limit in the column index $k$, but a Riemann sum type convergence in the space index $i$.
This phenomenon will be made clear when we perform the computations in Section \ref{Comparison between the particle system and the limiting model}.

We also remark that the ratio between $\epsilon$ and $N^{-\nicefrac{1}{d}}$ in Theorem \ref{Mean squared error estimates for actual particles vs McKean--Vlasov particles}, that is between the correlation radius of the noise and the distance among the neuron locations $x_i$, is completely arbitrary and the decay rate in \eqref{Mean squared error estimates for actual particles vs McKean--Vlasov particles eq 1} is independent of this.
The choice of this ratio is purely dictated by modelling arguments, namely by the correlation strength we want for the noise sensed by two nearby neurons, which for example can be taken to be zero.

Finally we translate the previous result about convergence of particles to the level of laws.

\begin{theorem}[Rate of convergence for the empirical measure]\label{Rate of convergence for empirical measures}
In the setting of Theorem \ref{Mean squared error estimates for actual particles vs McKean--Vlasov particles}, let $$f^{\epsilon}_{\ngrid,\ndelta}(t,dx,du)=\frac{1}{MN}\sum_{j=1}^N\sum_{m=1}^M\delta_{\big(X_j,u_{jm}^{\epsilon}(t)\big)}$$ be the empirical measure on $Q\times\R^4$ associated with the particle system \eqref{abstract 4 model particle system}.
Let $f(t,x,du)$ be the unique solution of the Fokker--Planck equation \eqref{abstract 4 model nonlinear fokker--planck} and consider the induced probability measure $f(t,dx,du)$ on $Q\times\R^4$ given by \eqref{McKean--Vlasov law on Q times R^4}.
Then, as $M,N\to\infty$, and possibly but not necessarily as $\epsilon\to0$, $f^{\epsilon}_{\ngrid,\ndelta}(t,dx,du)$ converges to $f(t,dx,du)$ in the Wasserstein distance in the sense
\begin{equation}
    \sup_{t\in[0,T]}\E\left[\w_1(Q\times\R^4)(f^{\epsilon}_{\ngrid,\ndelta}(t),f(t))\right]
    \leq
     C\left(1+\sup_{x\in Q}\E\left[|u_k(x,0)|^{2}\right]^{\frac{1}{2}}\right)\left(\frac{1}{\ngrid^{\frac{\alpha}{d}}}+\frac{1}{\ndelta^{\frac{1}{2}}}+\frac{1}{M^{\frac{1}{4}}}\right),
\end{equation}
for any $T>0$, for $C=C(T,\rho,b,\sigma,[u(\cdot,0)]_{\alpha},Q)$.
\end{theorem}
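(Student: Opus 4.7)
The plan is to introduce two intermediate measures and apply the triangle inequality for $\w_1$, reducing the problem to three independent pieces: a particle-level comparison, a statistical fluctuation estimate for a genuine empirical average, and a deterministic Riemann-sum error. Concretely, I would define
\begin{equation}
\bar f^{\epsilon}_{N,M}(t):=\frac{1}{MN}\sum_{j=1}^N\sum_{m=1}^M \delta_{(X_j,\bar u_{jm}^{\epsilon}(t))},
\qquad
\tilde f_N(t):=\frac{1}{N}\sum_{j=1}^N \delta_{X_j}\otimes f(t,X_j,du),
\end{equation}
where $\bar u_{jm}^{\epsilon}(t)=\bar u_m^{\epsilon}(X_j,t)$ are the McKean--Vlasov trajectories from Theorem~\ref{Mean squared error estimates for actual particles vs McKean--Vlasov particles} evaluated at the grid nodes. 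Writing
\begin{equation}
\w_1(f^{\epsilon}_{N,M}(t),f(t))\leq \w_1(f^{\epsilon}_{N,M}(t),\bar f^{\epsilon}_{N,M}(t))+\w_1(\bar f^{\epsilon}_{N,M}(t),\tilde f_N(t))+\w_1(\tilde f_N(t),f(t))
\end{equation}
then reduces the proof to bounding each summand separately.

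For the first summand, the diagonal coupling pairing atom $(X_j,u_{jm}^{\epsilon})$ with $(X_j,\bar u_{jm}^{\epsilon})$ gives $\w_1(f^{\epsilon}_{N,M}(t),\bar f^{\epsilon}_{N,M}(t))\leq\frac{1}{MN}\sum_{j,m}|u_{jm}^{\epsilon}(t)-\bar u_{jm}^{\epsilon}(t)|$, and Theorem~\ref{Mean squared error estimates for actual particles vs McKean--Vlasov particles} combined with Jensen's inequality bounds the expectation by $C\,T\,(N^{-\alpha/d}+M^{-1/2})(1+\sup_x\E[|u(x,0)|^2]^{1/2})$ uniformly for $t\in[0,T]$. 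For the second summand, writing $\bar f^{\epsilon}_{N,M}(t)=\frac{1}{N}\sum_j \delta_{X_j}\otimes \mu_j^M(t)$ with $\mu_j^M(t):=\frac{1}{M}\sum_m \delta_{\bar u_{jm}^{\epsilon}(t)}$ and keeping the $X_j$-coordinates diagonally coupled yields
\begin{equation}
\w_1(\bar f^{\epsilon}_{N,M}(t),\tilde f_N(t))\leq\frac{1}{N}\sum_{j=1}^N \w_1(\R^4)(\mu_j^M(t),f(t,X_j,\cdot)).
\end{equation}
For each fixed $j$, the samples $\{\bar u_{jm}^{\epsilon}(t)\}_{m=1}^M$ are i.i.d. with common law $f(t,X_j,du)$ on $\R^4$ (this is where independence in the column index $m$ is essential), and Theorem~\ref{Well-posedness of the non-linear fokker--planck equations} supplies a second-moment bound uniform in $j$. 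The Fournier--Guillin empirical-measure estimate \cite{Fournier2013OnTR} in dimension $4$ then yields $\E[\w_1(\R^4)(\mu_j^M(t),f(t,X_j,\cdot))]\lesssim(1+\sup_x\E[|u(x,0)|^2]^{1/2})\,M^{-1/4}$ uniformly in $j$.

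For the third, purely deterministic summand, the $\alpha$-H\"older regularity $\w_2(\R^4)(f(t,x,\cdot),f(t,y,\cdot))\leq C|x-y|^{\alpha}$ of Theorem~\ref{Well-posedness of the non-linear fokker--planck equations} is the key input. Partitioning $Q$ into cubes $C_j$ of sidelength $N^{-1/d}$ centred at $X_j$ and constructing the coupling that sends $(x,u)$ under $f(t)$ to $(X_{j(x)},v)$ via the optimal coupling of $f(t,x,\cdot)$ and $f(t,X_{j(x)},\cdot)$ for $x\in C_{j(x)}$, the transport cost in the $x$-variable is $O(N^{-1/d})$ and in the $u$-variable is $O(N^{-\alpha/d})$, giving $\w_1(\tilde f_N(t),f(t))\leq CN^{-\alpha/d}$ since $\alpha\leq 1$. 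Summing the three contributions produces the stated rate $N^{-\alpha/d}+M^{-1/2}+M^{-1/4}$. The main technical hurdle is ensuring the Fournier--Guillin rate with constants uniform in the space index $j$, which relies critically on the $L^{\infty}_x$ second-moment control established in~\eqref{Well-posedness of the non-linear fokker--planck equations eq 1}; without it, averaging over $j$ would pick up unbounded moments and destroy the uniformity.
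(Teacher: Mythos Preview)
Your proposal is correct and follows essentially the same route as the paper: the same triangle inequality with the two intermediate measures $\bar f^{\epsilon}_{N,M}$ and $\tilde f_N$, the same diagonal coupling plus Theorem~\ref{Mean squared error estimates for actual particles vs McKean--Vlasov particles} for the first piece, the same reduction to per-node empirical measures and the Fournier--Guillin bound for the second, and the same H\"older-regularity Riemann-sum argument for the third. The only cosmetic difference is that the paper also records a weaker convergence-in-probability statement (Lemma~\ref{convergence bar f_MN vs bar f_N}) before invoking Fournier--Guillin, but this is not needed for the quantitative estimate.
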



\section{Strong existence and uniqueness for the particle systems}
\label{Strong existence and uniqueness for the particle systems section}

In this section we establish strong existence and uniqueness for the particle system \eqref{abstract 4 model particle system}, thus proving Theorem \ref{Strong existence and uniqueness for the particle systems}.
The proof is based on a classical contraction argument and a crucial observation about the reflection term $\ell_{ik}$.

\begin{proof}[\textbf{Proof of Theorem \ref{Strong existence and uniqueness for the particle systems}.}]
\noindent
Fix $\ngrid,\ndelta\in\N$.
Take a probability space $(\Omega, \F, \p)$ supporting the initial conditions $u_{ik}(0):\Omega\to\R^4$ and the $4$-dimensional Brownian motions $W_{ik}(t)$ for $i=1,\dots,\ngrid$ and $k=1,\dots,\ndelta$.
Suppose $\E\left[|u_{ik}(0)|^2\right]<\infty$ for all $i$ and $k$.
For any $T>0$ let us define the Banach space 
\begin{equation}
    \label{continuous second moment processes}
    H_T^2\coloneqq\left\{\text{continuous adapted processes $Y_t:\Omega\to\R^4\quad$ with $\quad \E\left[\,\sup_{t \in[0,T]}|Y_t|^2\right]^{\frac{1}{2}}<\infty$}\right\}, 
\end{equation}
endowed with the norm $\|Y_{\cdot}\|\coloneqq\E\big[\sup_{t\in[0,T]}|Y_t|^2\big]^{\frac{1}{2}}$, and then consider the product space $(H_T^2)^{\ngrid\ndelta}$ equipped with the product norm.

Define $F:(H_T^2)^{\ngrid\ndelta}\to(H_T^2)^{\ngrid\ndelta}$ by sending an element $v_{ik}\in(H_T^2)^{\ngrid\ndelta}$ to the pathwise solutions $\Tilde{v}_{ik}$ of the SDEs with reflecting boundary conditions, for $i=1,\dots,\ngrid$ and $k=1,\dots,\ndelta$,
\begin{align}[left ={\empheqlbrace}]\label{strong existence and uniqueness 2nd moments proof 1}
    \begin{split}
    \Tilde{v}_{ik}(t)= &\, u_{ik}(0)+\int_0^tb(x_i,r,v_{ik}(r),f^v_{\ngrid,\ndelta}(r))\,dr+\int_0^t\sigma(x_i,r,v_{ik}(r),f^v_{\ngrid,\ndelta}(r))\,dW_{ik}(r)-\ell^v_{ik}(t),
    \\
    \ell_{ik}^{v,\beta}(t)= & -|\ell_{ik}^{v,\beta}|(t),\quad |\ell_{ik}^{v,\beta}|(t)=\int_0^t1_{\{\Tilde{v}_{ik}(r)=0\}}d|\ell_{ik}^{v,\beta}|(r)\quad\text{for }\beta=1,2,3,4.
    \end{split}
\end{align}
where we define $$f^v_{\ngrid,\ndelta}(t)=\frac{1}{\ngrid\ndelta}\sum_{j=1}^{\ngrid}\sum_{m=1}^\ndelta\delta_{\left(x_j,v_{jm}(t)\right)}.$$

Under the hypotheses \eqref{drift term structure}--\eqref{b_1 sigma_1 sublinear property} on $b$ and $\sigma$, and by straightforward modifications of the setting and the proofs in \cite{Sznitman-1984-nonlinear-reflecting-diffusion, Lions-Sznitman-1984-SDEreflectingBC}, strong existence and uniqueness can be established for the SDEs \eqref{strong existence and uniqueness 2nd moments proof 1} with initial data with bounded second moments. Moreover, for initial data with $\E\big[|u_{ik}(0)|^2\big]<\infty$ and data $v_{ik}\in H_T^2$, the solutions $\Tilde{v}_{ik}$ belong to $H_T^2$.

We want to find $T$ small enough so that the map $F$ is a contraction.
Take two elements $u_{ik},v_{ik}\in (H_T^2)^{\ngrid\ndelta}$, and consider $\Tilde{u}_{ik}=F(u_{ik})$ and $\Tilde{v}_{ik}=F(v_{ik})$.
We apply It\^o formula to $|\Tilde{u}_{ik}-\Tilde{v}_{ik}|^2$ and exploit the respective equations \eqref{strong existence and uniqueness 2nd moments proof 1} to get
\begin{gather}\label{strong existence and uniqueness 2nd moments proof 2}
\begin{aligned}
|\Tilde{u}_{ik}(t)-&\Tilde{v}_{ik}(t)|^2 \\
= & \,
2\int_0^t(\Tilde{u}_{ik}(r)-\Tilde{v}_{ik}(r))\big(b(x_i,r,u_{ik}(r),f^u_{\ngrid,\ndelta}(r))-b(x_i,r,v_{ik}(r),f^v_{\ngrid,\ndelta}(r))\big)\,dr
\\
&+2\int_0^t(\Tilde{u}_{ik}(r)-\Tilde{v}_{ik}(r))\big(\sigma(x_i,r,u_{ik}(r),f^u_{\ngrid,\ndelta}(r))-\sigma(x_i,r,v_{ik}(r),f^v_{\ngrid,\ndelta}(r))\big)\,dW_{ik}(r)
\\
&+ 2\int_0^t(\Tilde{u}_{ik}-\Tilde{v}_{ik})\big(d\ell_{ik}^{v}(r)-d\ell_{ik}^{u}(r)\big)
+\int_0^t\big(\sigma(x_i,r,u_{ik},f^u_{\ngrid,\ndelta})-\sigma(x_i,r,v_{ik},f^v_{\ngrid,\ndelta})\big)^2\,dr,
\end{aligned}
\end{gather}
Exploiting the very definition of the reflection terms $\ell_{ik}^u$ and $\ell_{ik}^v$ shows that the third term on the right hand side of \eqref{strong existence and uniqueness 2nd moments proof 2} is negative.
Indeed, we use the second line in \eqref{strong existence and uniqueness 2nd moments proof 1} to expand this term as
\begin{gather}\label{reflection term is negative}
\begin{aligned}
    \int_0^t(\Tilde{u}_{ik}-\Tilde{v}_{ik})\big(d\ell_{ik}^{v}(r)-d\ell_{ik}^{u}(r)\big)
    =
    \sum_{\beta=1}^{4}
    \bigg(
    &
    \int_0^t(\Tilde{u}_{ik}^{\beta}-\Tilde{v}_{ik}^{\beta})1_{\{\Tilde{u}_{ik}(r)=0\}}d|\ell_{ik}^{u,\beta}|(r)
    \\
    &+
    \int_0^t(\Tilde{v}_{ik}^{\beta}-\Tilde{u}_{ik}^{\beta})1_{\{\Tilde{v}_{ik}(r)=0\}}d|\ell_{ik}^{v,\beta}|(r)\bigg).
\end{aligned}
\end{gather}
Since the reflecting boundary conditions ensure that $\Tilde{u}_{ik}^{\beta},\Tilde{v}_{ik}^{\beta}\geq0$, we see that all the integrals in the sum on the right hand side are negative, since the integrands are.

Now we drop the third term in \eqref{strong existence and uniqueness 2nd moments proof 2}, take the supremum in time and apply the expectation to get
\begin{gather}
\label{strong existence and uniqueness 2nd moments proof 3}
\begin{aligned}
\E\Bigg[\sup_{t\in[0,T]}& |\Tilde{u}_{ik}(t)-\Tilde{v}_{ik}(t)|^2\Bigg]
\\
\leq
\Bigg(&\int_0^T\E\big[\left|\Tilde{u}_{ik}-\Tilde{v}_{ik}\right|\left|b(x_i,r,u_{ik}(r),f^u_{\ngrid,\ndelta}(r))-b(x_i,r,v_{ik}(r),f^v_{\ngrid,\ndelta}(r))\right|\big]\,dr
\\
&+\!\E\bigg[\!\!\sup_{t\in[0,T]}\!\!\Big(\!\int_0^t\!(\Tilde{u}_{ik}-\Tilde{v}_{ik})\big(\sigma(x_i,r,u_{ik}(r),f^u_{\ngrid,\ndelta}(r))\!-\!\sigma(x_i,r,v_{ik}(r),f^v_{\ngrid,\ndelta}(r))\big)\,dW_{ik}(r)\Big)\bigg]
\\
&+\int_0^T\E\Big[\left|\sigma(x_i,r,u_{ik}(r),f^u_{\ngrid,\ndelta}(r))-\sigma(x_i,r,v_{ik}(r),f^v_{\ngrid,\ndelta}(r))\right|^2\Big]\,dr
\,\Bigg).
\end{aligned}
\end{gather}
The second term on the right hand side is handled with the Burkholder--Davis--Gundy inequality and with H\"older's inequality:
\begin{gather}
\label{strong existence and uniqueness 2nd moments proof 4}
\begin{aligned}
\E\bigg[\!&\sup_{t\in[0,T]}\!\Big(\!\int_0^t\!(\Tilde{u}_{ik}-\Tilde{v}_{ik})\big(\sigma(x_i,r,u_{ik}(r),f^u_{\ngrid,\ndelta}(r))-\sigma(x_i,r,v_{ik}(r),f^v_{\ngrid,\ndelta}(r))\big)\,dW_{ik}(r)\Big)\bigg]
\\
&\leq
\E\bigg[\Big(\!\int_0^T\!\left|\Tilde{u}_{ik}-\Tilde{v}_{ik}\right|^2\left|\sigma(x_i,r,u_{ik}(r),f^u_{\ngrid,\ndelta}(r))-\sigma(x_i,r,v_{ik}(r),f^v_{\ngrid,\ndelta}(r))\right|^2\,dr\Big)^{\frac{1}{2}}\bigg]
\\
&\leq
\E\bigg[\!\sup_{t\in[0,T]}\!\left|\Tilde{u}_{ik}-\Tilde{v}_{ik}\right|\!\Big(\int_0^T\!\left|\sigma(x_i,r,u_{ik}(r),f^u_{\ngrid,\ndelta}(r))-\sigma(x_i,r,v_{ik}(r),f^v_{\ngrid,\ndelta}(r))\right|^2\,dr\Big)^{\frac{1}{2}}\bigg]
\\
&\leq
\frac{1}{2}
\E\bigg[\!\sup_{t\in[0,T]}\!\left|\Tilde{u}_{ik}-\Tilde{v}_{ik}\right|^2\bigg]
+\frac{1}{2}
\E\bigg[\!\int_0^T\!\left|\sigma(x_i,r,u_{ik}(r),f^u_{\ngrid,\ndelta}(r))-\sigma(x_i,r,v_{ik}(r),f^v_{\ngrid,\ndelta}(r))\right|^2\,dr\bigg].
\end{aligned}
\end{gather}
Then we absorb the first term on the right hand side of \eqref{strong existence and uniqueness 2nd moments proof 4} into the left hand side of \eqref{strong existence and uniqueness 2nd moments proof 3} to get, for $C$ a numeric constant,
\begin{gather}
\label{strong existence and uniqueness 2nd moments proof 5}
\begin{aligned}
\E\Bigg[\!\!\sup_{t\in[0,T]}\! |\Tilde{u}_{ik}(t)&-\Tilde{v}_{ik}(t)|^2\Bigg]
\\
\leq\,C \bigg(\!&\int_0^T\!\!\E\big[\left|\Tilde{u}_{ik}-\Tilde{v}_{ik}\right|\left|b(x_i,r,u_{ik}(r),f^u_{\ngrid,\ndelta}(r))-b(x_i,r,v_{ik}(r),f^v_{\ngrid,\ndelta}(r))\right|\big]\,dr
\\
&+\int_0^T\E\Big[\left|\sigma(x_i,r,u_{ik}(r),f^u_{\ngrid,\ndelta}(r))-\sigma(x_i,r,v_{ik}(r),f^v_{\ngrid,\ndelta}(r))\right|^2\Big]\,dr\bigg).
\end{aligned}
\end{gather}

Now, we use the structure \eqref{drift term structure}--\eqref{diffusion term structure} and the Lipschitz properties \eqref{b_0 sigma_0 lipschitz property}--\eqref{b_1 sigma_1 lipschitz property} of $b$ and $\sigma$, the definition of $f_{MN}^u$ and $f_{MN}^v$, and applications of H\"older's inequality to get, for $C=C(b,\sigma)$: 
\begin{align}\label{strong existence and uniqueness 2nd moments proof 6}
\E\left[\sup_{t\in[0,T]}|\Tilde{u}_{ik}(t)-\Tilde{v}_{ik}(t)|^2\right]\leq
C\bigg(& \int_0^T\E\left[|\Tilde{u}_{ik}(r)-\Tilde{v}_{ik}(r)|^2\right] \,dr
\\
&+ \frac{1}{MN}\sum_{j=1}^{\ngrid}\sum_{m=1}^\ndelta \int_0^T\E\left[|u_{jm}(r)-v_{jm}(r)|^2\right] \,dr \bigg).
\end{align}
Then, we exploit Gr\"onwall's lemma to get, for $C=C(T,b,\sigma)$:
\begin{gather*}\label{strong existence and uniqueness 2nd moments proof 4}
\begin{aligned}
\E\left[\sup_{t \in[0,T]}|\Tilde{u}_{ik}(t)-\Tilde{v}_{ik}(t)|^2\right]
&\leq
C\frac{1}{MN}\sum_{j=1}^{\ngrid}\sum_{m=1}^\ndelta \int_0^T\E\left[|u_{jm}(r)-v_{jm}(r)|^2\right] \,dr
\\
& \leq
T C \frac{1}{MN} \sum_{j=1}^{\ngrid}\sum_{m=1}^\ndelta \E\left[\sup_{t\in[0,T]}|u_{jm}(t)-v_{jm}(t)|^2\right].
\end{aligned}
\end{gather*}
Finally we sum over $i=1,\dots,\ngrid$ and $k=1,\dots,\ndelta$. In conclusion, by taking another time $T^*<T$ small enough with respect to $C=C(T,b,\sigma)$, we find that the map $F:(H_{T^*}^2)^{\ngrid\ndelta}\to(H_{T^*}^2)^{\ngrid\ndelta}$ is indeed a contraction.
The unique fixed point $u_{ik}=F(u_{ik})\in H_{T^*}^2$ is then the (pathwise unique) solution on $[0,T^*]$.
We conclude by gluing solutions on subsequent intervals $[nT^*,(n+1)T^*]$ up to $[0,\infty)$.
\end{proof}


\section{Well-posedness of the limiting McKean--Vlasov SDEs and PDE}
\label{the limiting model}

In this section we analyze the limiting model for the particle system \eqref{abstract 4 model particle system}, that is the McKean--Vlasov equation \eqref{abstract 4 model McKean--Vlasov} and the nonlinear Fokker--Planck equation \eqref{abstract 4 model nonlinear fokker--planck}.
In particular, Theorems \ref{Strong existence and uniqueness for the McKean--Vlasov} and \ref{Well-posedness of the non-linear fokker--planck equations} about existence and uniqueness for these equations will be proved using a contraction argument.

Let us define the functional setting for the contraction argument.
The Banach space $H_T^2\coloneqq L^2(\Omega$ is defined as in \eqref{continuous second moment processes}.
For any $T>0$ we shall also consider the complete metric space $C_T^2\coloneqq C([0,T];\pr_2(\R^4))$ of continuous functions with values in the complete metric space $(\pr_2(\R^4),\w_2(\R^4))$, where $\w_2$ is the Wasserstein distance \eqref{wasserstein distance}, endowed with the supremum distance $d_{C_T^2}(f,g)=\sup_{t\in[0,T]}\w_2(f(t),g(t))$.
Finally, we will employ the Banach space $\linfproctwo$ of bounded measurable maps $Q\to H_T^2$ endowed with the norm 
\begin{equation*}
    |u|_{\linfproctwo}\coloneqq\sup_{x\in Q}|u(x,\cdot)|_{H_T^2}=\sup_{x\in Q}\E\left[\sup_{t\in[0,T]}|u(x,t)|^2\right]^{\frac{1}{2}}.
\end{equation*}
Similarly, we also make use of the space $\linfmeastwo$.
Notice that despite $C_T^2=C([0,T];\pr_2(\R^4))$ not being a vector space, it still makes sense to say that a function $f:Q\to C_T^2$ is bounded by taking an arbitrary point $P_0\in C_T^2$ and imposing $\sup_{x\in Q}d_{C_T^2}(f(x),P_0)<\infty$.
For simplicity, we take $P_0(t)\equiv\delta_0$ the function (in $t$) identically equal to $\delta_0\in\pr_2(\R^4)$ --- the Dirac mass centered at zero. With abuse of notation, we denote 
\begin{equation*}
    |f|_{\linfmeastwo}\coloneqq\sup_{x\in Q}d_{C_T^2}(f(x),\delta_0)=\sup_{x\in Q}\sup_{t\in[0,T]}\Big(\int_{\R^4}|v|^2\,f(t,x,dv)\,\Big)^{\frac{1}{2}}.
\end{equation*}
Then $\linfmeastwo$ is a complete metric space with the distance $d_{\linfmeastwo}(f,g)\coloneqq\sup_{x\in Q}d_{C_T^2}(f(x),g(x))$.

Let us now introduce the maps yielding the contraction.
We are interested in the composition
\begin{equation}
    \label{contraction map 2nd moments}
    \linfmeastwo\xrightarrow{S^{\epsilon}}\linfproctwo\xrightarrow{L}\linfmeastwo.
\end{equation}
The map $L$ sends an element $u\in\linfproctwo$ to its bounded-in-space and continuous-in-time law on $\R^4$.
That is to say $L[u](x,\cdot)\in C([0,T];\pr_2(\R^4))$ is given by $L[u](x,t)=\law_{\R^4}(u(x,t))$ for each $x\in Q$ and $t\in [0,T]$.  
A direct computation gives
\begin{gather}\label{map L is well-defined}
\begin{aligned}
    \sup_{x\in Q}d_{C_T^2}(L[u](x),\delta_0)
    & =
    \sup_{x\in Q}\sup_{t\in[0,T]}\w_2(L[u](x,t),\delta_0)
    \\
    & =
    \sup_{x\in Q}\sup_{t\in[0,T]}\E\left[|u(x,t)|^2\right]^{\frac{1}{2}}
     \leq
    \sup_{x\in Q}\E\left[\sup_{t\in[0,T]}|u(x,t)|^2\right]^{\frac{1}{2}}
    =
    |u|_{\linfproctwo}
    <\infty,
\end{aligned}
\end{gather}
and $L[u]$ is indeed an element of $\linfmeastwo$.

The map $S^{\epsilon}$ is defined by sending an element $f\in\linfmeastwo$ to the solutions $(S^{\epsilon}[f](x,t))_{t\geq0}$ of the following SDEs with reflecting boundary conditions: for each fixed $x\in Q$
\begin{align}[left ={\empheqlbrace}]\label{solution map S}
\small
    \begin{split}
    S^{\epsilon}[f](x,t)=&\,u(x,0)\!+\!\!\int_0^t\!\!b(x,r,S^{\epsilon}[f](x,r),f(r))\,dr+\int_0^t\!\!\sigma(x,r,S^{\epsilon}[f](x,r),f(r))\,dW^{\epsilon}(x,t)-\ell[f](x,t),
    \\
    \ell^{\beta}[f](x,t)=&\,-|\ell^{\beta}[f](x,\cdot)|(t),\, |\ell^{\beta}[f](x,\cdot)|(t)\!=\!\int_0^t\! \! 1_{\{S^{\epsilon}[f]^\beta(x,r)=0\}}d|\ell^{\beta}[f](x,\cdot)|(r),\,\,\beta = 1,2,3,4,
    \end{split}
\end{align}where $u(\cdot,0)\in L^{\infty}(Q;L^2(\Omega)$ is the initial condition for the McKean--Vlasov equation.
Notice that we slightly abuse notation since we identify an element $f\in\linfmeastwo$ with the time dependent probability measure $f(t,dx,du)$ on $Q\times\R^4$ defined by
\begin{equation}
    \int_{Q\times\R^4}\varphi(x,u) f(t,dx,du)\coloneqq\int_Q\int_{\R^4}\varphi(x,u)f(t,x,du)\,dx\qquad\text{for any} \quad \varphi\in C_b(Q\times\R^4).
\end{equation}
Standard theory of SDEs with reflecting boundary conditions \cite{Sznitman-1984-nonlinear-reflecting-diffusion} ensures that, for each fixed $x\in Q$, equation \eqref{solution map S} has a pathwise unique solution. Indeed, owing to the conditions \eqref{drift term structure}--\eqref{b_1 sigma_1 sublinear property} on $b$ and $\sigma$, for fixed $x\in Q$ and $f\in\linfmeastwo$ the drift $\Tilde{b}(r,u)\coloneqq b(x,r,u,f(r))$ and diffusion $\Tilde{\sigma}(r,u)\coloneqq \sigma(x,r,u,f(r))$ terms can be verified to satisfy the needed assumptions.
The measurability of $x\mapsto S^{\epsilon}[f](x,\cdot)\in H^2_T$ then immediately follows from that of the initial data $u(x,0)$ and of the noise $W^{\epsilon}(x,t)$, using a Picard iteration representation of the solution of the SDEs \eqref{solution map S}.
The fact that the map $S$ is well-defined, i.e. that $S^{\epsilon}[f](x,\cdot)$ is indeed an element of $H_T^2$ uniformly bounded in $x\in Q$, is the subject of Lemma \ref{a priori estimates 2nd moments}.

By definition, for every $f\in\linfmeastwo$ we have $(L\circ S^{\epsilon})[f](x,t)=\law_{\R^4}(S^{\epsilon}[f](x,t))$ for all $x\in Q$ and $t\in[0,T]$, but in fact we can say that $(L\circ S^{\epsilon})[f](x,\cdot)=\law_{C([0,T];\R^4)}(S^{\epsilon}[f](x,\cdot))$, since it is the law of the SDE \eqref{solution map S}. That is, $(L\circ S^{\epsilon})[f](x,\cdot)$ can be seen as a probability measure on the space of continuous paths $C([0,T];\R^4)$. Furthermore, if $f\in\linfmeastwo$ is a fixed point of $L\circ S^{\epsilon}$, namely $f(x,t)=\law_{\R^4}(S^{\epsilon}[f](x,t))$ for all $x\in Q$ and $t\in[0,T]$, then $f(x,\cdot)=\law_{C([0,T];\R^4)}(S^{\epsilon}[f](x,\cdot))$.

\begin{lemma}[A priori estimates on moments]\label{a priori estimates 2nd moments}
Given $f\in\linfmeastwo$, the pathwise unique solution $(S^{\epsilon}[f](x,t))_{x\in Q,\,t\geq0}$ to \eqref{solution map S} satisfies
\begin{align}
    \label{a priori estimates 2nd moments eq 1}
    |S^{\epsilon}[f]|_{\linfproctwo}^2
    &=\sup_{x\in Q}\E\left[\sup_{t\in[0,T]}|S^{\epsilon}[f](x,t)|^2\right]\nonumber
    \\
    &\leq
    C\left(1+\sup_{x\in Q}\E\left[|u(x,0)|^2\right]+\int_0^T\sup_{x\in Q}\int_{\R^4}|u|^2\,f(t,x,du)\,dt\right) 
    \\
    &\leq
    C\left(1+\sup_{x\in Q}\E\left[|u(x,0)|^2\right]+|f|_{\linfmeastwo}^2\right),\nonumber
\end{align}
for a constant $C=C(T,b,\sigma)$. In particular, $S^{\epsilon}[f]\in\linfproctwo$, and the map $S^{\epsilon}$ and the composition $L\circ S^{\epsilon}$ are well defined.
Moreover, if $f\in\linfmeastwo$ is a fixed point of $L\circ S^{\epsilon}$, then $f(x,\cdot)$, as a probability measure on the space of continuous paths $C([0,T];\R^4)$, satisfies the stronger bound
\begin{equation}
    \label{a priori estimates 2nd moments eq 2}
    \sup_{x\in Q}\int_{C([0,T];\R^4)}\sup_{t\in[0,T]}|v(t)|^2\,f(x,dv)=|S^{\epsilon}[f]|_{\linfproctwo}^2
    \leq
    C\Big(1+\sup_{x\in Q}\E\left[|u(x,0)|^2\right]\Big).
\end{equation}
\end{lemma}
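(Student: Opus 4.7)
The plan is to apply It\^o's formula to $|S^{\epsilon}[f](x,t)|^2 = \sum_{\beta=1}^4 (S^{\epsilon}[f]^{\beta}(x,t))^2$ for each fixed $x\in Q$, using that for fixed $x$ the process $t\mapsto W^{\epsilon}(x,t)$ is a standard $4$-dimensional Brownian motion. The key observation is that the contribution from the reflection term vanishes: each summand $-2\int_0^t S^{\epsilon}[f]^{\beta}(x,r)\,d\ell^{\beta}[f](x,r) = 2\int_0^t S^{\epsilon}[f]^{\beta}(x,r)\,d|\ell^{\beta}[f](x,\cdot)|(r)$ is identically zero, since by the Skorokhod constraint $d|\ell^{\beta}[f](x,\cdot)|$ is supported on $\{S^{\epsilon}[f]^{\beta}(x,r)=0\}$. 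This is the same cancellation used in the proof of Theorem \ref{Strong existence and uniqueness for the particle systems} but now applied to a single process rather than a difference.

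After this cancellation, taking the supremum over $t\in[0,T']$ for $T'\leq T$, passing to the expectation and invoking the Burkholder--Davis--Gundy inequality on the local martingale term, I would use Young's inequality in the BDG-bound to split off a term of the form $\tfrac{1}{4}\E[\sup_{t\leq T'}|S^{\epsilon}[f](x,t)|^2]$ which gets absorbed into the left-hand side. Applying Lemma \ref{b sigma lipschitz and sublinear properties if disintegration} to control $|b|^2+|\sigma|^2\leq C\big(1+|S^{\epsilon}[f](x,r)|^2+\sup_y\int|v|^2 f(r,y,dv)\big)$, I obtain, uniformly in $x\in Q$,
\begin{equation*}
\E\Big[\sup_{t\leq T'}|S^{\epsilon}[f](x,t)|^2\Big] \leq C\Big(1+\E[|u(x,0)|^2]+\int_0^{T'}\E[|S^{\epsilon}[f](x,r)|^2]\,dr+\int_0^{T'}\sup_y\int_{\R^4}|v|^2 f(r,y,dv)\,dr\Big).
\end{equation*}
Taking the supremum in $x$ on both sides and applying Gr\"onwall's lemma to $g(T')\coloneqq\sup_{x\in Q}\E[\sup_{t\leq T'}|S^{\epsilon}[f](x,t)|^2]$ yields \eqref{a priori estimates 2nd moments eq 1}; bounding the time integral of the supremum moment of $f$ by $T\,|f|_{\linfmeastwo}^2$ gives the second line.

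For the improved bound \eqref{a priori estimates 2nd moments eq 2} in the fixed-point case, I would bootstrap: if $f=L\circ S^{\epsilon}[f]$, then $\int_{\R^4}|v|^2 f(r,y,dv)=\E[|S^{\epsilon}[f](y,r)|^2]\leq g(r)$, and substituting this into the estimate above gives
\begin{equation*}
g(T')\leq C\Big(1+\sup_{x\in Q}\E[|u(x,0)|^2]\Big) + C\int_0^{T'}g(r)\,dr,
\end{equation*}
so a second application of Gr\"onwall produces the claim. The identity $\sup_x\int_{C([0,T];\R^4)}\sup_t|v(t)|^2 f(x,dv)=|S^{\epsilon}[f]|_{\linfproctwo}^2$ then follows from the fact already observed in the discussion preceding the lemma that, at a fixed point, $f(x,\cdot)=\law_{C([0,T];\R^4)}(S^{\epsilon}[f](x,\cdot))$.

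The only genuinely delicate point is the commutation of $\sup_{x\in Q}$ with the Gr\"onwall iteration; this works because all constants appearing in Lemma \ref{b sigma lipschitz and sublinear properties if disintegration} and in the BDG step are independent of $x$, so the pointwise-in-$x$ integral inequality upgrades to one for $g(T')$. The reflection-term cancellation in the first step is what makes the argument essentially identical to the classical unconstrained case; all other steps are standard.
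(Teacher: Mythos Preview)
Your argument is correct, and it follows the same overall architecture as the paper's proof (sublinear growth, Gr\"onwall, then bootstrap at the fixed point), but the treatment of the reflection term is genuinely different.

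The paper does \emph{not} apply It\^o's formula to $|S^{\epsilon}[f](x,t)|^{2}$. Instead it squares the SDE \eqref{solution map S} directly, which forces one to control $|\ell[f](x,t)|$ itself. For this the paper invokes the explicit Skorokhod bound
\[
|\ell[f](x,t)|\leq \sup_{\tau\in[0,t]}\Big|u(x,0)+\int_0^{\tau}b\,dr+\int_0^{\tau}\sigma\,dW^{\epsilon}(x,r)\Big|,
\]
so that after convexity inequalities the reflection term is absorbed into the same quantities that already appear, and the stochastic integral is handled via Doob/It\^o isometry rather than BDG on a cross term.

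Your route---It\^o on $|S^{\epsilon}[f]|^{2}$, exact vanishing of $\int S^{\epsilon}[f]^{\beta}\,d|\ell^{\beta}|$ by the Skorokhod support condition, then BDG with Young's inequality to absorb---is the same mechanism the paper uses in its \emph{other} proofs (Theorems \ref{Strong existence and uniqueness for the particle systems} and \ref{Strong existence and uniqueness for the McKean--Vlasov}, cf.\ \eqref{reflection term is negative} and \eqref{strong existence and uniqueness 2nd moments proof 4}), just applied here to a single process rather than a difference. What your approach buys is that you never need the quantitative Skorokhod bound on $|\ell|$; what the paper's approach buys is that the martingale term is $\int\sigma\,dW$ rather than $\int S^{\epsilon}[f]\,\sigma\,dW$, so one can get away with It\^o isometry and no absorption step. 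Both lead to the same pre-Gr\"onwall inequality \eqref{a priori estimates 2nd moments eq 6}, and the fixed-point bootstrap is identical in the two arguments.
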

\begin{proof}
Fix any $f\in\linfmeastwo$, we want to estimate $|S^\epsilon[f](x,t)|^2$.
Owing to the structure \eqref{drift term structure}--\eqref{diffusion term structure} and the sublinear growth properties \eqref{b_0 sigma_0 sublinear property}--\eqref{b_1 sigma_1 sublinear property} of the drift and diffusion terms, we have
\begin{equation}\label{a priori estimates 2nd moments eq 3}
    |b(x,r,u,f(r))|+|\sigma(x,r,u,f(r))|\leq C \left(1+|u|+\sup_{y\in Q}\int_{\R^4}|v|\,f(r,y,dv)\,\right).
\end{equation}
Then, using H\"older's inequality one gets
\begin{equation}
    \label{a priori estimates 2nd moments eq 4}
    \sup_{y\in Q}\!\int_{\R^4}\!\!|v|\,f(r,y,dv)
    \leq
    \sup_{y\in Q}\Big(\!\int_{\R^4}|v|^2\,f(r,y,dv)\Big)^{\frac{1}{2}}
    \!\!\!\leq
    \sup_{y\in Q}\sup_{r\in[0,T]}\!\!\Big(\!\int_{\R^4}|v|^2\,f(r,y,dv)\Big)^{\frac{1}{2}}\!\!=|f|_{\linfmeastwo}.
\end{equation}
Moreover, the explicit details in \cite{Sznitman-1984-nonlinear-reflecting-diffusion} on the construction of the reflection term $\ell$ in the SDE \eqref{solution map S} imply that we can control it as follows:
\begin{equation}
    \label{a priori estimates 2nd moments eq 5}
    |\ell[f](x,t)|
    \!\leq
    \!\sup_{\tau\in[0,t]}\!\left|u(x,0)\!+\!\int_0^\tau\!\!\! b(x,r,S^{\epsilon}[f](x,r),f(r))\,dr\!+\!\int_0^\tau \!\!\!\sigma(x,r,S^{\epsilon}[f](x,r),f(r))\,dW^{\epsilon}(x,r)\right|.
\end{equation}
Squaring both sides of the SDE \eqref{solution map S}, controlling the reflection term with the estimate \eqref{a priori estimates 2nd moments eq 5}, applying convexity inequalities, taking the supremum over $t\in[0,T]$ and then the expectation, and finally handling the deterministic integral with H\"older's inequality and the stochastic integral with It\^o isometry, we obtain
\begin{gather}\label{a priori estimates 2nd moments eq 5.5}
\begin{aligned}
    \E\left[\sup_{t\in[0,T]}|S^{\epsilon}[f](x,t)|^2\right]
    \leq
    C\bigg(&\E\left[u(x,0)^2\right]+\E\left[\int_0^T b(x,r,S^{\epsilon}[f](x,r),f(r))^2\,dr\right]\,dt
    \\
    &+\E\left[\int_0^T \sigma(x,r,S^{\epsilon}[f](x,r),f(r))^2\,dr)\right]\bigg),
\end{aligned}
\end{gather}
for a numeric constant $C$.
In turn, using the sublinear growth estimates \eqref{a priori estimates 2nd moments eq 3}--\eqref{a priori estimates 2nd moments eq 4}, we get
\begin{multline}
\label{a priori estimates 2nd moments eq 6}
    \E\left[\sup_{t\in[0,T]}|S^\epsilon[f](x,t)|^2\right]
    \leq
    C\Bigg(1+\E\left[u(x,0)^2\right] 
    \\
    \left.+\int_0^T\E\left[\sup_{r\in[0,t]}|S^\epsilon[f](x,r)|^2\right]\,dt
    +\int_0^T\sup_{y\in Q}\int_{\R^4}|u|^2\,f(t,y,du)\,dt\right),
\end{multline}
for a constant $C=C(T,b,\sigma)$.
Then we exploit Gr\"onwall's Lemma to get rid of the third term on the right hand side of the inequality in \eqref{a priori estimates 2nd moments eq 6}. Eventually, by taking the supremum over $x\in Q$ and using \eqref{a priori estimates 2nd moments eq 4} again, we deduce the inequalities \eqref{a priori estimates 2nd moments eq 1}.

Suppose now that $f$ is a fixed point of $L\circ S^{\epsilon}$.
Since $f(t,y)=\law_{\R^4}(S^{\epsilon}[f](y,t))$, we readily verify that
\begin{equation*}
    \int_0^T\sup_{y\in Q}\int_{\R^4}|u|^2\,f(t,y,du)\,dt
    =
    \int_0^T\sup_{y\in Q}\E\left[|S^{\epsilon}[f](y,t)|^2\right]\,dt
    \leq
    \int_0^T\sup_{y\in Q}\E\left[\sup_{r\in[0,t]}|S^{\epsilon}[f](y,r)|^2\right]\,dt.
\end{equation*}
Then we exploit this bound and again use Gr\"onwall's Lemma in the first inequality \eqref{a priori estimates 2nd moments eq 1} to get rid of the third term at the right hand side and obtain \eqref{a priori estimates 2nd moments eq 2} in the statement.
\end{proof}

Now, assuming that the composition $L\circ S^{\epsilon}$ is a contraction in $\linfmeastwo$, we first show how to conclude the strong existence and uniqueness for the McKean--Vlasov equation \eqref{abstract 4 model McKean--Vlasov}.
Let $f\in\linfmeastwo$ be the unique fixed point of $L\circ S^{\epsilon}$: since $S^{\epsilon}[f]$ solves \eqref{solution map S} and $L\circ S^{\epsilon}[f]=f$, we obtain that $S^{\epsilon}[f]$ solves the McKean--Vlasov equation \eqref{abstract 4 model McKean--Vlasov} on our stochastic basis with initial data $(u(x,0))_{x\in Q}$.
Conversely, let $(u(x,t))_{x\in Q,\,t\geq0}$ be a strong solution of \eqref{abstract 4 model McKean--Vlasov} on our stochastic basis with these initial data, then $L[u]\in\linfmeastwo$ is a fixed point of $L\circ S^{\epsilon}$ and thus we must have $L[u]=f$, the unique fixed point; but then, since we have strong uniqueness for the SDEs \eqref{solution map S} defining the map $S^{\epsilon}$ and since $u(x,t)$ solves these SDEs with this data $f$, we conclude that $(u(x,t))_{x\in Q,\,t\geq 0}=S^{\epsilon}[f]$.

\begin{proof}[\textbf{Proof of Theorem \ref{Strong existence and uniqueness for the McKean--Vlasov}}]
\noindent
 We show that the mapping $L\circ S^\epsilon$ is a strict contraction and then apply the Banach fixed point theorem.
Take $f,g\in\linfmeastwo$.
By definition of the map $S^\epsilon$ we have
\begin{gather}
    S^{\epsilon}[f](x,t)=u(x,0)+\int_0^tb(x,r,S^{\epsilon}[f],f)\,dr+\int_0^t\sigma(x,r,S^{\epsilon}[f],f)\,dW^{\epsilon}(x,r)-\ell[f](x,t), \label{strong existence uniqueness McKean--Vlasov proof eq 1}
    \\
    S^{\epsilon}[g](x,t)=u(x,0)+\int_0^tb(x,r,S^{\epsilon}[g],g)\,dr+\int_0^t\sigma(x,r,S^{\epsilon}[g],g)\,dW^{\epsilon}(x,r)-\ell[g](x,t).\label{strong existence uniqueness McKean--Vlasov proof eq 2}
\end{gather}
Then, by definition of the map $L$, we have that $(S^\epsilon[f](x,t),S^\epsilon[g](x,t))$ is an admissible coupling for $(L\circ S^\epsilon[f](x,t),L\circ S^\epsilon[g](x,t))$ and we can use it to estimate $\w_2(L\circ S^\epsilon[f](x,t),L\circ S^\epsilon[g](x,t))$.

We take the difference of equations \eqref{strong existence uniqueness McKean--Vlasov proof eq 1} and \eqref{strong existence uniqueness McKean--Vlasov proof eq 2} and use the It\^o formula to get
\begin{gather}\label{strong existence uniqueness McKean--Vlasov proof eq 3}
\begin{aligned}
    |S^\epsilon[f]&(x,t)-S^\epsilon[g](x,t)|^2
    \\
    =&\,2\int_0^t\!\!\!\big(S^\epsilon[f](x,r)-S^\epsilon[g](x,r)\big)\big(b(x,r,S^\epsilon[f],f)-b(x,r,S^\epsilon[g],g)\big)\,dr 
    \\
    &+
    2\!\int_0^t\!\!\!\big(S^\epsilon[f](x,r)\!-\!S^\epsilon[g](x,r)\big)\big(\sigma(x,r,S^\epsilon[f],f)\!-\!\sigma(x,r,S^\epsilon[g],g)\big)\,dW^\epsilon(x,r)
    \\
    &+
    2\int_0^t\big(S^\epsilon[f](x,r)-S^\epsilon[g](x,r)\big)\big(d\ell[g](x,r)-d\ell[f](x,r)\big)
    \\
    &+
    \int_0^t\!\!\!\big(\sigma(x,r,S^\epsilon[f](x,r),f(r))-\sigma(x,r,S^\epsilon[g](x,r),g(r))\big)^2\,dr.
\end{aligned}
\end{gather}
We now argue analogously to \eqref{strong existence and uniqueness 2nd moments proof 2}--\eqref{strong existence and uniqueness 2nd moments proof 5} in the proof of Theorem \ref{Strong existence and uniqueness for the particle systems}.
First, as in \eqref{strong existence and uniqueness 2nd moments proof 3} the third 
the second term on the right hand side of \eqref{strong existence uniqueness McKean--Vlasov proof eq 3} is negative, and we drop it. 
Then we take the supremum in time and apply the expectation, we control the first deterministic integral with H\"older's inequality and the stochastic integral with the Burkholder--Davis--Gundy and H\"older's inequality, and finally we absorb the necessary terms on the left hand side of \eqref{strong existence uniqueness McKean--Vlasov proof eq 3} to get
\begin{gather}\label{strong existence uniqueness McKean--Vlasov proof eq 4}
\begin{aligned}
    \E\left[\sup_{r\in[0,t]}|S^\epsilon[f](x,r)-S^\epsilon[g](x,r)|^2\right]
    \leq
    C\,\E\bigg[&\int_0^t\!\!\!\big(b(x,r,S^\epsilon[f],f)-b(x,r,S^\epsilon[g],g)\big)^2\,dr
    \\
    &+\int_0^t\big(\sigma(x,r,S^\epsilon[f],f)-\sigma(x,r,S^\epsilon[g],g)\big)^2\,dr\bigg],
\end{aligned}
\end{gather}
for a numeric constant $C$.
Now we exploit the Lipschitz properties of the drift and diffusion terms stated in Lemma \ref{b sigma lipschitz and sublinear properties if disintegration} and we obtain, for $C=C(b,\sigma)$,
\begin{gather}\label{strong existence uniqueness McKean--Vlasov proof eq 5}
\begin{aligned}
\E\left[\sup_{r\in[0,t]}|S^\epsilon[f](x,r)-S^\epsilon[g](x,r)|^2\right]
    \leq
    C\bigg(&\int_0^t\E\left[|S^\epsilon[f](x,r)-S^\epsilon[g](x,r)|^2\right]\,dr
    \\
    &+
    \int_0^t\sup_{y\in Q}\w_2(f(r,y,du),g(r,y,du))^2\,dr\bigg).
\end{aligned}
\end{gather}
Using Gr\"onwall's Lemma we get rid of the first term on the right hand side at the expense of a larger constant $C=C(T,b,\sigma)$.
Moreover, we have $\w_2(f(r,y,du),g(r,y,du))\leq\sup_{r\in[0,T]}\w_2(f(r,y,du),g(r,y,du))$ for any $r\in[0,T]$, and we conclude that
\begin{equation}\label{strong existence uniqueness McKean--Vlasov proof eq 6}
    \E\left[\sup_{t\in[0,T]}|S^\epsilon[f](x,t)-S^\epsilon[g](x,t)|^2\right]\leq C\,T\,\sup_{y\in Q}\sup_{r\in[0,T]}\w_2(f(r,y,du),g(r,y,du))^2,
\end{equation}
for a constant $C=C(T,b,\sigma)$.
Finally, since the right hand side is independent of $x$, we take the supremum over $x\in Q$ on the left hand side of \eqref{strong existence uniqueness McKean--Vlasov proof eq 6}.

In conclusion, recalling that $(S^\epsilon[f](x,t),S^\epsilon[g](x,t))$ is a coupling for $(L\circ S^\epsilon[f](x,t),L\circ S^\epsilon[g](x,t))$, we obtain
\begin{align*}\label{strong existence uniqueness McKean--Vlasov proof eq 7}
    d_{\linfmeastwo}(L\circ S^\epsilon[f],L\circ S^\epsilon[g])^2
    &=
    \sup_{x\in Q}\sup_{t\in[0,T]}\w_2(L\circ S^\epsilon[f](x,t),L\circ S^\epsilon[g](x,t))^2
    \\
    &\leq
    \sup_{x\in Q}\sup_{t\in[0,T]}\E\left[|S^\epsilon[f](x,t)-S^\epsilon[g](x,t)|^2\right]
    \\
    &
    \leq
    \sup_{x\in Q}\E\left[\sup_{t\in[0,T]}|S^\epsilon[f](x,t)-S^\epsilon[g](x,t)|^2\right]
    \\
    &
    \leq
    C\,\,T\,\,\sup_{x\in Q}\sup_{t\in[0,T]}\w_2(f(x,t),g(x,t))^2
    =
    C\,\,T\,\,d_{\linfmeastwo}(f,g)^2,
\end{align*}
for $C=C(T,b,\sigma)$.
This constant $C$ is increasing in $T$. Therefore, upon possibly working in $L^{\infty}(Q;C_{T^*}^2)$ for some smaller $T^*<T$, we can assume that $CT<1$.
That is to say, if $T>0$ is small enough, we have a contraction in $\linfmeastwo$.
In turn this implies that we have a pathwise unique solution to \eqref{abstract 4 model McKean--Vlasov} over $[0,T]$.
Repeating the same argument over $[T,2T]$, $[2T,3T]$ and so on, and exploiting the uniqueness, we can show there exists a pathwise unique solution defined over all $[0,\infty)$.

Now, assume in addition that $u(x,0)\in C^{\alpha}(Q;L^2(\Omega))$. The following argument proves that in this case, for any $f\in\linfmeastwo$, we have $S^{\epsilon}[f]\in C^{\alpha}(Q;L^2(\Omega;C([0,T];\R^4)))$.
In particular, the solution of the McKean--Vlasov equation \eqref{abstract 4 model McKean--Vlasov} satisfies $u^\epsilon(x,t)\in C^{\alpha}(Q;L^2(\Omega;C([0,T];\R^4)))$.

Given $x,y\in Q$, we manipulate the equations \eqref{solution map S} for $S^{\epsilon}[f](x)$ and $S^{\epsilon}[f](y)$ to write 
\begin{gather}\label{strong existence uniqueness McKean--Vlasov proof eq 7.bis}
    \begin{aligned}
    S^\epsilon[f](x,t)-S^\epsilon[f](y,t)
    =&\,\big(u(x,0)-u(y,0)\big)
    \\
    &+\int_0^t\!\!\big(b(x,r,S^\epsilon[f](x,r),f(r))-b(x,r,S^\epsilon[f](y,r),f(r))\big)\,dr 
    \\
    &+
    \int_0^t\!\!\big(\sigma(x,r,S^\epsilon[f](x,r),f(r))\!-\!\sigma(y,r,S^\epsilon[f](y,r),f(r))\big)\,dW^{\epsilon}(x,r)
    \\
    &+\int_0^t\!\!\sigma(y,r,S^\epsilon[f](y,r),f(r))
    \left(dW^\epsilon(x,r)-dW^{\epsilon}(y,r)\right) 
    \\
    &+\big(\ell[f](y,r)-\ell[f](x,r)\big).
    \end{aligned}
\end{gather}
Applying the It\^o formula to the squared power yields
\begin{gather}\label{strong existence uniqueness McKean--Vlasov proof eq 8}
    \begin{aligned}
    |S^\epsilon[f](x,t)-&S^\epsilon[f](y,t)|^2
    \\
    =&\,(u(x,0)-u(y,0))^2
    \\
    &+\,2\int_0^t\!\!\!\big(S^\epsilon[f](x,r)-S^\epsilon[f](y,r)\big)\big(b(x,r,S^\epsilon[f],f)-b(x,r,S^\epsilon[f],f)\big)\,dr 
    \\
    &+
    2\!\int_0^t\!\!\!\big(S^\epsilon[f](x,r)\!-\!S^\epsilon[f](y,r)\big)\big(\sigma(x,r,S^\epsilon[f],f)\!-\!\sigma(y,r,S^\epsilon[f],f)\big)\,dW^\epsilon(x,r)
    \\
    &+
    2\!\int_0^t\!\!\!\big(S^\epsilon[f](x,r)\!-\!S^\epsilon[f](y,r)\big)\,\sigma(y,r,S^\epsilon[f],f)\left(dW^\epsilon(x,r)-dW^{\epsilon}(y,r)\right) 
    \\
    &+
    2\int_0^t\big(S^\epsilon[f](x,r)-S^\epsilon[f](y,r)\big)\big(d\ell[f](y,r)-d\ell[f](x,r)\big)
    \\
    &+
    \int_0^t\!\!\!\big(\sigma(x,r,S^\epsilon[f](x,r),f(r))-\sigma(y,r,S^\epsilon[f](y,r),f(r))\big)^2\,dr
    \\
    &+
    \int_0^t\!\sigma(y,r,S^\epsilon[f](y,r),f(r))^2\,d\left[W^{\epsilon}(x,r)-W^\epsilon(y,r)\right].
    \end{aligned}
\end{gather}
For the first stochastic integral, the Burkholder--Davis--Gundy inequality and H\"older's inequality yield
\begin{gather}\label{strong existence uniqueness McKean--Vlasov proof eq 9}
    \begin{aligned}
    \E\Bigg[&\sup_{t\in[0,T]}\left|\int_0^t\!\!\!\big(S^\epsilon[f](x,r)\!-\!S^\epsilon[f](y,r)\big)\big(\sigma(x,r,S^\epsilon[f],f)\!-\!\sigma(y,r,S^\epsilon[f],f)\big)\,dW^\epsilon(x,r)\right|\Bigg]
    \\
    &\leq
    \E\!\left[\!\left(\int_0^T\!\!\!\!\!\big(S^\epsilon[f](x,r)\!-\!S^\epsilon[f](y,r)\big)^2\!\big(\sigma(x,r,S^\epsilon[f],f)\!-\!\sigma(y,r,S^\epsilon[f],f\big)^2dr\right)^{\frac{1}{2}}\!\right]
    \\
    &\leq
    \E\left[\!\sup_{t\in[0,T]}\!\big|S^\epsilon[f](x,t)\!-\!S^\epsilon[f](y,t)\big|\left(\int_0^T\!\!\!\!\!\big(\sigma(x,r,S^\epsilon[f],f)\!-\!\sigma(y,r,S^\epsilon[f],f\big)^2dr\right)^{\frac{1}{2}}\!\!\right]
    \\
    &\leq
    \delta\,\E\left[\sup_{t\in[0,T]}\big|S^\epsilon[f](x,t)\!-\!S^\epsilon[f](y,t)\big|^2\right]
    +
    \frac{1}{\delta}\,\E\left[\int_0^T\!\!\!\big(\sigma(x,r,S^\epsilon[f],f)\!-\!\sigma(y,r,S^\epsilon[f],f\big)^2dr\right],
    \end{aligned}
\end{gather}
where $\delta>0$ shall be chosen small enough so as to absorb the first term on the right hand side.
Similarly, for the second stochastic integral we find
\begin{gather}\label{strong existence uniqueness McKean--Vlasov proof eq 9.bis}
    \begin{aligned}
    \E&\Bigg[\sup_{t\in[0,T]}\left|\int_0^t\!\!\!\big(S^\epsilon[f](x,r)\!-\!S^\epsilon[f](y,r)\big)\,\sigma(y,r,S^\epsilon[f](y,r),f(r))\left(dW^\epsilon(x,r)-dW^\epsilon(y,r\right)\right|\Bigg]
    \\
    &\leq
    \E\!\left[\!\left(\int_0^T\!\!\!\!\!\big(S^\epsilon[f](x,r)\!-\!S^\epsilon[f](y,r)\big)^2\,\sigma(y,r,S^\epsilon[f],f)^2\,d\left[W^{\epsilon}(x,r)-W^\epsilon(y,r)\right] \right)^{\frac{1}{2}}\!\right]
    \\
    &\leq
    \delta\,\E\left[\sup_{t\in[0,T]}\big|S^\epsilon[f](x,t)\!-\!S^\epsilon[f](y,t)\big|^2\right]
    +
    \frac{1}{\delta}\,\E\left[\int_0^T\!\!\!\sigma(y,r,S^\epsilon[f],f)^2d\left[W^{\epsilon}(x,r)-W^\epsilon(y,r)\right]\right],
    \end{aligned}
\end{gather}
where again $\delta>0$ shall be chosen small enough to absorb the first term on the right hand side.

We now go back to \eqref{strong existence uniqueness McKean--Vlasov proof eq 8}.
As in \eqref{strong existence and uniqueness 2nd moments proof 2}, the third term on the right hand side is always negative and we drop it.
Then we take the supremum in time and we apply the expectation, we handle the first deterministic integral with H\"older's inequality and we use estimates \eqref{strong existence uniqueness McKean--Vlasov proof eq 9} and \eqref{strong existence uniqueness McKean--Vlasov proof eq 9.bis} for the stochastic integrals, absorbing the necessary terms on the left hand side by choosing $\delta$ small enough.
We obtain, for a numeric constant $C$,
\begin{gather}\label{strong existence uniqueness McKean--Vlasov proof eq 10}
    \begin{aligned}
    \E\Bigg[\!\sup_{t\in[0,T]}\!|S^\epsilon[f](x,t)-S^\epsilon[f](y,t)|^2\Bigg]
    \leq C\Bigg(&\E\left[|u(x,0)-u(y,0)|^2\right]
    \\
    &+\!\int_0^T\E\left[\left|S^\epsilon[f](x,r)-S^\epsilon[f](y,r)\right|^2\right]dr
    \\
    &+\!
    \int_0^T\E\left[\left|b(x,r,S^\epsilon[f],f)\!-\!b(y,r,S^\epsilon[f],f\big)\right|^2\right]dr
    \\
    &+\!
    \int_0^T\E\left[\left|\sigma(x,r,S^\epsilon[f],f)\!-\!\sigma(y,r,S^\epsilon[f],f)\right|^2\right]dr
    \\
    &+\!
    \E\left[\int_0^T\!\!\!\sigma(y,r,S^\epsilon[f](y,r),f(r))^2\,d\left[W^{\epsilon}(x,r)\!-\!W^\epsilon(y,r)\right]\right]
    \Bigg).
    \end{aligned}
\end{gather}
Now we recall formula \eqref{formula/epsilon convoluted noise quadratic variation} for the quadratic variation of $W^{\epsilon}(x,t)-W^\epsilon(y,t)$, we use the Lipschitz and H\"older properties \eqref{lipschitz and holder properties for f fixed} of $b$ and $\sigma$ and convexity inequalities to get
\begin{gather}\label{strong existence uniqueness McKean--Vlasov proof eq 11}
    \begin{aligned}
    \E\Bigg[\sup_{t\in[0,T]}|S^\epsilon&[f](x,t)-S^\epsilon[f](y,t)|^2\Bigg]
    \\
    \leq C\Bigg(&\E\left[|u(x,0)-u(y,0)|^2\right]
    +\int_0^T\E\left[\left|S^\epsilon[f](x,r)-S^\epsilon[f](y,r)\right|^2\right]dr+|x-y|^{2\alpha}
    \\
    &
    +\frac{|x-y|^2}{\epsilon^2}\int_0^T\!\E\left[|\sigma(y,r,S^{\epsilon}[f],f)|^2\right]dr
    \Bigg),
    \end{aligned}
\end{gather}
for a constant $C=C(T,b,\sigma,\rho)$.
We get rid of the second term on the right hand side of \eqref{strong existence uniqueness McKean--Vlasov proof eq 11}  with Gr\"onwall's Lemma, at the price of a larger constant $C=C(T,b,\sigma)$.
The first term is handled with the assumption $u(\cdot,0)\in C^{\alpha}(Q;L^2(\Omega)$.
We control the last term with the sublinear growth property \eqref{lipschitz and holder properties for f fixed} of $\sigma$ and the a priori estimate \eqref{a priori estimates 2nd moments eq 1}.
In conclusion we obtain
\begin{gather}\label{strong existence uniqueness McKean--Vlasov proof eq 12}
\small
    \begin{aligned}
    \E\left[\!\sup_{t\in[0,T]}\!|S^\epsilon[f](x,t)\!-\!S^\epsilon[f](y,t)|^2\right]
    \!\leq\! C\left(\!|x-y|^{2\alpha}\!+\!\frac{|x-y|^2}{\epsilon^2}\bigg(1+\sup_{z\in Q}\E\left[|u(z,0)|^2\right]+|f|_{\linfmeastwo}^2\bigg)\right),
    \end{aligned}
\end{gather}
for a constant $C=C(T,b,\sigma,\rho,[u(\cdot,0)]_{\alpha})$.
Since $x,y\in Q$ are arbitrary, this concludes the proof that $S^\epsilon[f]\in C^\alpha(Q;L^2(\Omega;C([0,T];\R^4)))$.
\end{proof}

We end this section by proving the existence and uniqueness of solutions to the associated Fokker--Planck equation.

\begin{proof}[\textbf{Proof of Theorem \ref{Well-posedness of the non-linear fokker--planck equations}}]
\noindent
The result is a consequence of the It\^o formula, the same fixed point argument as for the McKean--Vlasov equation and the uniqueness statement for the \emph{linear} version of the Fokker--Planck type equation.
Given any admissible initial condition $f_0(x,du)\in L^{\infty}(Q;\pr_2(\R^4))$, standard probability theory ensures that we can find a probability space $(\Omega, \F, \p)$ supporting a $4$-dimensional space-time white noise $(W(x,t))_{x\in Q,t\geq0}$ and a family of random variables $u(x,0)\in L^{\infty}(Q;L^2(\Omega))$ independent of the noise $W(x,t)$ with $\law_{\R^4}(u(x,0))=f_0(x,du)$ for every $x\in Q$.
Given any $\epsilon>0$, we convolve and rescale the white noise to obtain $W^{\epsilon}(x,t)$ as in \eqref{formula/epsilon convoluted and rescaled noise}.
With this stochastic basis and initial data, let $(\Bar{u}^{\epsilon}(x,t)\in L^{\infty}(Q;H_T^2)$ be the solution of the $\epsilon$-correlated McKean--Vlasov equation \eqref{abstract 4 model McKean--Vlasov}, whose existence is guaranteed by Theorem \ref{Strong existence and uniqueness for the McKean--Vlasov}, and let us denote $f^\epsilon(x,t,du)=\law_{\R^4}(\Bar{u}^\epsilon(x,t))\in \linfmeastwo$.
We claim that $f^\epsilon$ is a weak solution of equation \eqref{abstract 4 model nonlinear fokker--planck}.

Take any $\phi\in C_c^2(\R_+\times \R^4)$ satisfying the Neumann boundary condition
\begin{equation} \label{Well-posedness of the non-linear fokker--planck equations proof eq 1}
    \nabla_u\phi(t,u)\cdot n_{\partial(\R^4_+)}(u)=0\quad \text{for all $t,u\in\R^+\times \partial(\R_+^4)$},
\end{equation}
where $n_{\partial(\R^4_+)}(u)$ denotes the unit outward normal at $u$.
An application of the It\^o formula yields
\begin{align} \label{Well-posedness of the non-linear fokker--planck equations proof eq 2}
    \phi(t,\Bar{u}^\epsilon(x,t))
    =&\phi(0,\Bar{u}(x,0))
    \!+\!
    \int_0^t\!\!\partial_t\phi(r,\Bar{u}^\epsilon(x,r))\,dr \nonumber
    +
    \!\int_0^t\!\nabla_u\phi(r,\Bar{u}^\epsilon(x,r))\cdot b(x,r,\Bar{u}^\epsilon(x,r),f^\epsilon(r))\,dr
    \\
    &+\!\int_0^t\!\nabla_u\phi(r,\Bar{u}^\epsilon(x,r))\cdot \sigma(x,r,\Bar{u}^\epsilon(x,r),f^\epsilon(r))\,dW^\epsilon(x,r)
    \\
    &+
    \int_0^t\sum_{\beta=1}^4\partial_{u^\beta}\phi(r,\Bar{u}^\epsilon(x,r))\,1_{\{\Bar{u}^{\epsilon,\beta}(x,r)=0\}}\,d|\ell^{\beta}(x,\cdot)|(r) \nonumber
    \\
    &+
    \int_0^t\frac{1}{2}\sum_{\beta=1}^4\partial^2_{u^\beta u^\beta}\phi(r,\Bar{u}^\epsilon(x,r))\,\,\big(\sigma_{\beta}(x,r,\Bar{u}^\epsilon(x,r),f^\epsilon(r))\big)^2\,dr. \nonumber
\end{align}
The fifth term on the right hand side is identically zero thanks to the condition \eqref{Well-posedness of the non-linear fokker--planck equations proof eq 1} on $\phi$.
Now we apply the expectation on both sides.
The fourth term on the right hand side vanishes by the martingale property of the stochastic integral.
Recalling that $\Bar{u}^\epsilon(x,t)$ takes values in $\R_+^4$ only, we get
\begin{align} \label{Well-posedness of the non-linear fokker--planck equations proof eq 3}
    \int_{\R^4_+}\phi(t,u)f^\epsilon(t,x,du)
    = &
    \int_{\R^4_+}\phi(0,u)f_0(x,du)
    +
    \int_0^t\int_{\R^4_+}\partial_t\phi(r,u)f^\epsilon(r,x,du)\,dr \nonumber
    \\
    &+
    \int_0^t\int_{\R^4_+}\nabla_u\phi(r,u)\cdot b(x,r,u,f^\epsilon(r))f^\epsilon(r,x,du)\,dr
    \\
    &+
    \int_0^t\int_{\R_+^4}\frac{1}{2}\sum_{\beta=1}^4\partial^2_{u^\beta u^\beta}\phi(r,u)\,\,\big(\sigma_{\beta}(x,r,u,f^\epsilon(r))\big)^2\,f^\epsilon(r,x,du)\,dr. \nonumber
\end{align}
This is nothing but the weak formulation of \eqref{abstract 4 model nonlinear fokker--planck} subjected to the no-flux boundary conditions.
Since for every $T>0$ we have $\Bar{u}^\epsilon(x,\cdot)\in H_T^2$ and since it satisfies the bound \eqref{Strong existence and uniqueness for the McKean--Vlasov eq 1}, we conclude that $f^\epsilon(x,t)=\law_{\R^4}(\Bar{u}^\epsilon(x,t))$ is a weak solution of \eqref{abstract 4 model nonlinear fokker--planck} with initial condition $f_0(x,du)$, that it lies in the space $L^{\infty}(Q;C([0,\infty);\pr_2(\R^4)))$ and that it actually satisfies the stronger bound \eqref{Well-posedness of the non-linear fokker--planck equations eq 1}. 

Conversely, let $g\in L^{\infty}(Q;C([0,\infty);\pr_2(\R^4)))$ be a weak solution of the non-linear Fokker--Planck equation \eqref{abstract 4 model nonlinear fokker--planck} with the same initial data $f_0$. We claim that $g=f^\epsilon$.
First, we can solve the family of standard SDEs with reflecting boundary conditions for the chosen $g$, for $x\in Q$:
\begin{align*}[left ={\empheqlbrace}]\label{Well-posedness of the non-linear fokker--planck equations proof eq 4}
    \begin{split}
    \displaystyle
    \,\,\,S^\epsilon[g](x,t)=&\,u(x,0)+\int_0^tb(r,x,S^\epsilon[g](x,r),g(r))\,dr\!+\!\!\int_0^t\!\!\sigma(r,x,S^\epsilon[g](x,r),g(r))\,dW^\epsilon(x,r)-\ell[g](x,t),
    \\
    \,\,\,\ell^{\beta}[g](x,t)=&-|\ell^{\beta}[g](x,\cdot)|(t)\,,\,\, |\ell^{\beta}[g](x,\cdot)|(t)\!=\!\int_0^t1_{\{S^\epsilon[g]^\beta(x,r)=0\}}d|\ell^{\beta}[g](x,\cdot)|(r)\quad\text{for }\beta=1,2,3,4.
    \end{split}
\end{align*}
Arguing as in \eqref{Well-posedness of the non-linear fokker--planck equations proof eq 2}--\eqref{Well-posedness of the non-linear fokker--planck equations proof eq 3}, we see that $h\coloneqq L\circ S^\epsilon[g]$ now solves the \emph{linear} Fokker--Planck equation with this fixed $g$ and with the same initial data $f_0$:
\begin{align}[left ={\empheqlbrace}]\label{Well-posedness of the non-linear fokker--planck equations proof eq 5}
    \begin{split}
        & \partial_th(t,x,u)+\nabla_u\cdot\big(b(x,t,u,g(t))h(t,x,u)\big)=\frac{1}{2}\sum_{\beta=1}^4\partial^2_{u^\beta u^\beta}\big(\sigma_\beta(x,t,u,g(t))^2h(t,x,u)\big),
        \\
        & b^\beta(x,t,u,g(t))h(t,x,u)-\frac{1}{2}\frac{\partial}{\partial u^\beta}\big(\sigma_{\beta}(x,t,u,g(t))^2h(t,x,u)\big)\Big|_{u^\beta=0}=0\quad\text{for $\beta=1,2,3,4$}.
    \end{split}
\end{align}
This linear equation is readily verified to satisfy uniqueness by a duality argument: indeed, for fixed $x\in Q$, it suffices to test it against arbitrary functions $\varphi(t,u)$ satisfying the so-called \emph{backward Kolmogorov equation} with Neumann boundary conditions on $\R_+^4$.
That is,
\begin{align*}[left ={\empheqlbrace}]\label{Well-posedness of the non-linear fokker--planck equations proof eq 6}
    \begin{split}
        & \partial_t\varphi+\nabla_u\varphi\cdot b(x,t,u,g(t))+\frac{1}{2}\sum_{\beta=1}^4\big(\sigma_{\beta}(x,t,u,g(t))\big)^2\,\partial^2_{u^\beta u^\beta}\varphi=0 \quad \text{on $(0,t_0)\times\R_+^4$},
        \\
        & \nabla_u\varphi(t,u)\cdot n_{\partial(\R^4_+)}(u)=0 \quad \text{on $(0,t_0)\times\partial(\R_+^4)$ },
        \\
        & \varphi(t_0,u)=\Phi(u)\quad \text{on $\{t_0\}\times\R_+^4$},
    \end{split}
\end{align*}
where we let $t_0\in\R^+$ and $\Phi\in C_c^2(\R_+^4)$ be arbitrary. 
Such an equation is always solvable since we have the right sign of the diffusion term (see \cite{risken_1996_fokker_planck} for details).
Going back to \eqref{Well-posedness of the non-linear fokker--planck equations proof eq 5}, we know that $g$ as well is a solution of this equation and thus we must conclude that $g=L\circ S^\epsilon[g]$.
Now let $T>0$ be small enough so that the composition map $L\circ S^\epsilon$ is a contraction in $L^{\infty}(Q;C_T^2)$.
This implies that $g\in L^{\infty}(Q;C_T^2)$ is a fixed point of $L\circ S^\epsilon$ and hence it must coincide with $f^\epsilon$ over $[0,T]$.
Applying the same argument over subsequent intervals $[T,2T]$, $[2T,3T]$ and so forth proves the uniqueness statement.

In particular, given any two $\epsilon,\Tilde{\epsilon}>0$, we take $g=f^{\tilde{\epsilon}}$ and we conclude that $f^{\epsilon}=f^{\tilde{\epsilon}}$.
That is to say $f(x,t)\coloneqq\text{Law}_{\R^4}(\Bar{u}^{\epsilon}(x,t))$ is independent of $\epsilon$ and is the unique solution of the nonlinear Fokker--Planck equation.

Finally, we assume that $f_0\in C^{\alpha}(Q;\pr_2(\R^4))$ and we show that the corresponding solution satisfies $f\in C^{\alpha}(Q;\pr_2(C[0,T];\R^4))$.
The theory of Wasserstein distances (see e.g. \cite{Villani}) ensures that we can find a stochastic basis supporting the white noise $W$ and random variables $u(x,0)\in C^\alpha(Q;L^2(\Omega))$ such that $\law_{\R^4}(u(x,0))=f_0(x,du)$ for every $x\in Q$.
We fix $\epsilon=1$ and we consider the iteration maps \eqref{contraction map 2nd moments} defined via this stochastic basis and with these initial data.
In particular we have $L\circ S^\epsilon[f]=f$, and thus for any $x,y\in Q$ we obtain
\begin{equation}
    \w_2\left(C\left([0,T];\R^4\right)\right)(f(x,\cdot),f(y,\cdot))\leq\E\left[\sup_{t\in[0,T]}|S^\epsilon[f](x,t)-S^\epsilon[f](y,t)|^2\right].
\end{equation}
This and formula \eqref{strong existence uniqueness McKean--Vlasov proof eq 12} with $\epsilon=1$ show that $f\in C^{\alpha}(Q;\pr_2(C[0,T];\R^4))$.
\end{proof}


\section{Error estimates between the particle system and the limiting model}
\label{Comparison between the particle system and the limiting model}

In this section we rigorously show that the limiting behaviour of the particle system \eqref{abstract 4 model particle system} as $M,N\to\infty$ is described by the McKean--Vlasov equation \eqref{abstract 4 model McKean--Vlasov} as stated in Theorem \ref{Mean squared error estimates for actual particles vs McKean--Vlasov particles} by obtaining an error estimate.
We will use the so-called Sznitman coupling method (cf. \cite{Sznitman1991}).

First, we lay out the right setting so as to get the convergence result. 
We fix a probability space $(\Omega,\mathcal{F},\p)$ and assume it supports all the random variables listed below.
First, for each $k\in\N$, let $\{W_k(x,t)\}_{k\in\N}$ be independent $4$-dimensional space-time white noise terms over $Q\times[0,\infty)$.

For any $\epsilon>0$ we then convolve and rescale the noise terms to obtain the $\epsilon$-correlated noise $W_k^{\epsilon}$ as in formula \eqref{formula/epsilon rescaled white noise}.
For $h\in\N$, we assume i.i.d. families of random initial conditions $u_h(x,0)\in C^{\alpha}(Q;L^2(\Omega))$ on the sheet $Q$. Moreover, we require them to be independent of the white noise terms $\{W_k(x,t)\}_{k\in\N}$.
Finally, as noted in Section \ref{introduction}, we take points $X_1,\dots,X_N\in Q$ in the center of the squares of an equispaced grid on $Q=[0,1]^d$ with side length $N^{-\frac{1}{d}}$.
We denote by $Q^N_i$ the square with center $X_i$, and we notice that $\meas(Q_i^N)=\frac{1}{N}$ and $\text{diam}(Q_i^N)=\sqrt{d}N^{-\frac{1}{d}}$.

We finally introduce the particles for the coupling method.
For $i=1,\dots,N$ and $k=1,\dots,M$, let $u_{ik}^{\epsilon}(t)$ be the solution of the particle system \eqref{abstract 4 model particle system} with initial data $u_{ik}(0)\coloneqq u_k(X_i,0)$ and Brownian motions $W_{ik}^{\epsilon}(t)\coloneqq W^{\epsilon}_k(X_i,t)$.
Let $\bar{u}_k^{\epsilon}(x,t)$ be the solution of the McKean--Vlasov equation with initial data $u_k(x,0)$ and correlated noise $W_k^{\epsilon}(x,t)$, and for $i=1,\dots,N$ define $\bar{u}_{ik}^{\epsilon}(x,t)\coloneqq \bar{u}^{\epsilon}_k(X_i,t)$.

Owing the i.i.d. properties of the initial data and the noise terms, we have the following.
\begin{lemma}
For fixed $i$, the particles $u^{\epsilon}_{ik}(t)$ are exchangeable for $k=1,\dots,M$.
Moreover, for fixed $i$, the particles $\bar{u}_{ik}^{\epsilon}(t)$ are i.i.d. for $k\in\N$.
\end{lemma}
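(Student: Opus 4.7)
The proof naturally splits into the two assertions, and both are ultimately consequences of strong uniqueness (Theorems \ref{Strong existence and uniqueness for the particle systems} and \ref{Strong existence and uniqueness for the McKean--Vlasov}) combined with the structural symmetries of the input data.

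For exchangeability of $(u^{\epsilon}_{ik})_{k=1,\dots,M}$ at fixed $i$, the plan is to show that applying any permutation $\pi$ of $\{1,\dots,M\}$ \emph{globally} across the index $k$ produces a system solving \eqref{abstract 4 model particle system} whose driving data has the same joint law. Precisely, set $\tilde u_{ik}(\cdot) := u^{\epsilon}_{i\pi(k)}(\cdot)$ and observe that the empirical measure $f_{N,M}(r,dy,du)=\frac{1}{MN}\sum_{j,m}\delta_{(X_j,u_{jm}(r))}$ is invariant under relabelling of the summation index $m$. Consequently $(\tilde u_{ik})_{i,k}$ satisfies the same reflected SDE system as $(u^{\epsilon}_{ik})_{i,k}$, but driven by $(u_{\pi(k)}(X_i,0),W^{\epsilon}_{\pi(k)}(X_i,\cdot))_{i,k}$. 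Because the collections $\{u_k(\cdot,0)\}_{k\in\N}$ and $\{W^{\epsilon}_k(\cdot,\cdot)\}_{k\in\N}$ are each i.i.d. in $k$ and are independent of each other, the joint law of the permuted drivers matches that of the original ones. Pathwise uniqueness from Theorem \ref{Strong existence and uniqueness for the particle systems} then forces $(\tilde u_{ik})_{i,k}$ and $(u^{\epsilon}_{ik})_{i,k}$ to be equidistributed, which in particular yields the claimed exchangeability in $k$ at every fixed $i$.

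For the i.i.d. statement concerning $\bar u^{\epsilon}_{ik}(t)=\bar u^{\epsilon}_k(X_i,t)$, the key point is that the nonlinear self-coupling in \eqref{abstract 4 model McKean--Vlasov} runs through a \emph{deterministic} law $f$. By Theorem \ref{Well-posedness of the non-linear fokker--planck equations}, the family $\{\law_{\R^4}(\bar u^{\epsilon}_k(x,t))\}$ is the unique Fokker--Planck solution attached to the initial distribution $\law_{\R^4}(u_k(x,0))$, which by the i.i.d. assumption does not depend on $k$; hence this $f$ is the \emph{same} deterministic function for every $k$. With $f$ frozen, \eqref{abstract 4 model McKean--Vlasov} reduces to a standard reflected SDE, and through the Picard iteration implicit in the map $S^{\epsilon}$ of \eqref{solution map S} each $\bar u^{\epsilon}_k$ can be written as a single measurable functional $\Phi$ applied to its own driver $(u_k(\cdot,0),W^{\epsilon}_k(\cdot,\cdot))$. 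Since these drivers are i.i.d. in $k$, so are the outputs $\bar u^{\epsilon}_k = \Phi(u_k(\cdot,0),W^{\epsilon}_k(\cdot,\cdot))$, and restricting to $x=X_i$ preserves the i.i.d. property across $k$.

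The only genuinely subtle step — and hence the main obstacle — is the decoupling that justifies writing each $\bar u^{\epsilon}_k$ as a function of its own driver alone. This relies crucially on uniqueness of the Fokker--Planck solution, which is what guarantees the same deterministic $f$ appears in the equation for every $k$; without uniqueness, different $\bar u^{\epsilon}_k$ could in principle be matched to different laws and the independence argument would collapse. Everything else is a clean symmetry/relabelling argument made rigorous by strong uniqueness at the SDE level.
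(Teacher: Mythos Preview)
Your argument is correct and matches the paper's intent: the paper does not give a detailed proof of this lemma at all, stating only that it follows ``owing [to] the i.i.d.\ properties of the initial data and the noise terms.'' Your symmetry-plus-strong-uniqueness argument is exactly the standard way to make this precise.

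One small remark: for the i.i.d.\ part you invoke Theorem~\ref{Well-posedness of the non-linear fokker--planck equations} (Fokker--Planck uniqueness) to conclude that the same deterministic $f$ appears for every $k$. That theorem's uniqueness clause requires the non-degeneracy $|\sigma|\geq c>0$, which is an unnecessary extra hypothesis here. It suffices to use Theorem~\ref{Strong existence and uniqueness for the McKean--Vlasov} directly: the contraction map $L\circ S^{\epsilon}$ has a unique fixed point in $\linfmeastwo$, and this map depends on the stochastic basis only through the \emph{law} of $(u_k(\cdot,0),W^{\epsilon}_k)$, which is the same for every $k$. Hence the fixed point $f$ is identical across $k$ without any appeal to PDE uniqueness, and the rest of your decoupling argument goes through unchanged.
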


We point out that this is not the case for the index $i$, both for the particles $u_{ik}^{\epsilon}$ and $\bar{u}^{\epsilon}_{ik}$.
Indeed, the laws of $u_{ik}^{\epsilon}$ and $u_{jk}^{\epsilon}$, or $\bar{u}_{ik}^{\epsilon}$ and $\bar{u}_{ik}^{\epsilon}$ respectively, might differ as a result of the $x$ dependence of their defining equations.
Furthermore, even if the points $X_i,X_j\in Q$ are far from each other, namely if $|X_i-X_j|>2\epsilon$, so that their noise terms $W^{\epsilon}_k(X_i,t)$ and $W^{\epsilon}_k(X_j,t)$ are independent, the particles might still be correlated as a result of their initial data.
In fact, from the point of view of modelling in neuroscience, we expect $u_k(x,0)$ to be close to $u_k(y,0)$ for $x$ close to $y$.

We are finally ready to prove the convergence result of Theorem \ref{Mean squared error estimates for actual particles vs McKean--Vlasov particles}.
We first stress the following.

\begin{remark}
As mentioned in Section \ref{main results}, we point out that we do not need to impose any constraint on the ratio between the correlation radius $\epsilon$ of the noise and the minimum distance $\sqrt{d}N^{-\nicefrac{1}{d}}$ between two grid points $X_i,X_j\in Q$.
The choice of the scaling regime $(\epsilon, N)$, with $N\to\infty$ and $\epsilon\to0$ or possibly also $\epsilon\equiv\epsilon_0$ a constant, is purely arbitrary and dictated by modelling arguments only.
One might impose $\epsilon\,N^{\frac{1}{d}}<\sqrt{d}$ so that all the particles sense independent noise, or choose to impose a certain ratio $\epsilon\, N^{\frac{1}{d}}>\sqrt{d}$, so that neurons at locations close enough to each other sense correlated noise.
The results and the proof of Theorem \ref{Mean squared error estimates for actual particles vs McKean--Vlasov particles} are unchanged.
\end{remark}
\begin{proof}[\textbf{Proof of Theorem \ref{Mean squared error estimates for actual particles vs McKean--Vlasov particles}}]
\noindent
For any $i=1,\dots,N$ and $k=1,\dots,M$, take the difference $|u^\epsilon_{ik}-\Bar{u}^\epsilon_{ik}|$ between actual particles and McKean--Vlasov particles.
Applying the It\^o formula and exploiting the respective equations \eqref{abstract 4 model particle system} and \eqref{abstract 4 model McKean--Vlasov}, we get 
\begin{align}\label{Mean squared error estimates for actual particles vs McKean--Vlasov particles proof eq 1} 
    \begin{split}
    |u_{ik}^\epsilon(t)-\Bar{u}^\epsilon_{ik}(t)|^2=\,&
    2\int_0^t\big(u^\epsilon_{ik}(r)-\Bar{u}^\epsilon_{ik}(r)\big)\big(b(X_i,r,u^\epsilon_{ik}(r),f^\epsilon_{MN}(r))-b(X_i,r,\Bar{u}^\epsilon_{ik}(r),f(r))\big)\,dr
    \\
    &+2\int_0^t\!\!\!\big(u_{ik}^\epsilon(r)-\Bar{u}_{ik}^\epsilon(r)\big)\big(\sigma(X_i,r,u_{ik}^\epsilon,f^\epsilon_{MN})-\sigma(X_i,r,\Bar{u}_{ik}^\epsilon,f)\big)\,dW^\epsilon(X_i,r)
    \\
    &+2\int_0^t\big(u_{ik}^\epsilon(r)-\Bar{u}_{ik}^\epsilon(r)\big)\big(d\Bar{\ell}_{ik}(r)-d\ell_{ik}(r)\big)
    \\
    &+\int_0^t\!\!\!\big(\sigma(X_i,r,u_{ik}^\epsilon(r),f^\epsilon_{MN}(r))-\sigma(X_i,r,\Bar{u}^\epsilon_{ik}(r),f(r))\big)^2\,dr.
    \end{split}
\end{align}
Now we argue as in \eqref{strong existence and uniqueness 2nd moments proof 2}--\eqref{strong existence and uniqueness 2nd moments proof 5}.
First we drop the third term in \eqref{Mean squared error estimates for actual particles vs McKean--Vlasov particles proof eq 1}, which is always negative owing to the definition of the reflection terms $\ell_{ik}$ and $\Bar{\ell}_{ik}$.
Then we take the supremum in $t\in[0,\tau]$ and apply the expectation. 
Next we use the Burkholder--Davis--Gundy and H\"older's inequality, we absorb the necessary terms into the left hand side and finaly we exploit Gr\"onwall's Lemma.
We eventually obtain, for $C=C(T)$,
\begin{align}\label{Mean squared error estimates for actual particles vs McKean--Vlasov particles proof eq 2}
    \begin{split}
    \E\left[\sup_{t\in[0,\tau]}|u_{ik}^\epsilon(t)-\Bar{u}^\epsilon_{ik}(t)|^2\right]
    \leq
    C\Bigg(&\int_0^{\tau}\E\left[|b(X_i,r,u^\epsilon_{ik},f^\epsilon_{MN})-b(X_i,r,\Bar{u}^\epsilon_{ik},f)|^2\,\right]\,dr
    \\
    &+\int_0^{\tau}\E\left[|\sigma(X_i,r,u^\epsilon_{ik},f^\epsilon_{MN})-\sigma(X_i,r,\Bar{u}^\epsilon_{ik},f)|^2\,\right]\,dr\Bigg).
\end{split}
\end{align}

In order to split the terms on the right hand side of the inequality \eqref{Mean squared error estimates for actual particles vs McKean--Vlasov particles proof eq 2} and exploit the particular structure of the drift and diffusion terms, we introduce the following probability measure on $Q\times\R^4$:
\begin{equation}\label{Mean squared error estimates for actual particles vs McKean--Vlasov particles proof eq 3}
    \Bar{f}^\epsilon_{MN}(t,dy,dv)=\frac{1}{MN}\sum_{j=1}^N\sum_{m=1}^M\delta_{(X_j,\Bar{u}^\epsilon_{jm}(t))}\in\pr(Q\times\R^4).
\end{equation}
This measure is just the empirical measure associated to the collection of McKean--Vlasov particles $(X_j,\Bar{u}^\epsilon_{jm}(t))$.
We have
\begin{align}\label{Mean squared error estimates for actual particles vs McKean--Vlasov particles proof eq 4}
    |b(X_i,r,u^\epsilon_{ik},f^\epsilon_{MN})-b(X_i,r,\Bar{u}^\epsilon_{ik},f)|
    \leq &\,
    |b(X_i,r,u^\epsilon_{ik},f^\epsilon_{MN})-b(X_i,r,\Bar{u}^\epsilon_{ik},f^\epsilon_{MN})| \nonumber
    \\
    &
    +
    |b(X_i,,r,\Bar{u}^\epsilon_{ik},f^\epsilon_{MN})-b(X_i,r,\Bar{u}^\epsilon_{ik},\Bar{f}^\epsilon_{MN})|
    \\
    &
    +
    |b(X_i,,r,\Bar{u}^\epsilon_{ik},\Bar{f}^\epsilon_{MN})-b(X_i,r,\Bar{u}^\epsilon_{ik},f)|. \nonumber
\end{align}
Due to the structure of the drift term \eqref{drift term structure} and its Lipschitz properties \eqref{b_0 sigma_0 lipschitz property}--\eqref{b_1 sigma_1 lipschitz property}, and owing to the definition of $\Bar{f}^\epsilon_{MN}(r)$ in \eqref{Mean squared error estimates for actual particles vs McKean--Vlasov particles proof eq 3}, we get the following estimates for terms on the right hand side of \eqref{Mean squared error estimates for actual particles vs McKean--Vlasov particles proof eq 4}:
\begin{align}\label{Mean squared error estimates for actual particles vs McKean--Vlasov particles proof eq 5}
    \big|b(X_i,r,u^\epsilon_{ik},f^\epsilon_{MN})- b(X_i,r,\Bar{u}^\epsilon_{ik},f^\epsilon_{MN})\big|
    & \leq
    C\,\big|u^\epsilon_{ik}(r)-\Bar{u}^\epsilon_{ik}(r)\big|,
    \nonumber \\[2mm]
    \big|b(X_i,r,\Bar{u}^\epsilon_{ik},f^\epsilon_{MN})-  b(X_i,r,\Bar{u}^\epsilon_{ik},\Bar{f}^\epsilon_{MN})\big| 
    &
    \leq C\,\Bigg|\int_{Q\times\R^4}b_1(X_i,y,r,\Bar{u}^\epsilon_{ik},v)f^\epsilon_{MN}(r,dy,dv) 
     \nonumber \\
    & \qquad\quad -\int_{Q\times\R^4}b_1(X_i,y,r,\Bar{u}^\epsilon_{ik},v)\Bar{f}^\epsilon_{MN}(r,dy,dv)\Bigg|  
    \nonumber \\
    &
    \leq
    C\,\frac{1}{MN}\sum_{j=1}^N\sum_{m=1}^M\big|u^\epsilon_{jm}(r)-\Bar{u}^\epsilon_{jm}(r)\big|
    ,
\\[2mm]
    \big|b(X_i,r,\Bar{u}^\epsilon_{ik},\Bar{f}^\epsilon_{MN})-  b(X_i,r,\Bar{u}^\epsilon_{ik},f)\big|
    &
    \leq
    C\,\Bigg|\int_{Q\times\R^4}b_1(X_i,y,r,\Bar{u}^\epsilon_{ik},v)\Bar{f}^\epsilon_{MN}(r,dy,dv)
    \nonumber \\ 
    & \qquad\quad-\int_{Q\times\R^4}b_1(X_i,y,r,\Bar{u}^\epsilon_{ik},v)f(r,dy,dv)\Bigg| 
    \\
    &
    =
    C\,\Bigg|\frac{1}{MN}\!\!\sum_{j=1}^N\sum_{m=1}^M\!\bigg(b_1(X_i,X_j,r,\Bar{u}^\epsilon_{ik}(r),\Bar{u}^\epsilon_{jm}(r))\!
    \\
     &\qquad\qquad\quad-\!\int_{Q\times\R^4}b_1(X_i,y,r,\Bar{u}^\epsilon_{ik},v)f(r,dy,dv)\bigg)\,\Bigg|
     ,
\end{align}
for a constant $C=C(b)$ only depending on the Lipschitz constants of $b$.
An identical splitting \eqref{Mean squared error estimates for actual particles vs McKean--Vlasov particles proof eq 4} holds for the term $(\sigma(X_i,r,u^\epsilon_{ik},f^\epsilon_{MN})-\sigma(X_i,r,\Bar{u}^\epsilon_{ik},f))$ and using the Lipschitz properties \eqref{b_0 sigma_0 lipschitz property}--\eqref{b_1 sigma_1 lipschitz property} of $\sigma$ we obtain analogous estimates to \eqref{Mean squared error estimates for actual particles vs McKean--Vlasov particles proof eq 5}.

Going back to \eqref{Mean squared error estimates for actual particles vs McKean--Vlasov particles proof eq 2}, we exploit \eqref{Mean squared error estimates for actual particles vs McKean--Vlasov particles proof eq 4} and \eqref{Mean squared error estimates for actual particles vs McKean--Vlasov particles proof eq 5}.
After standard convexity inequalities we obtain, for $C=C(T,b,\sigma)$,
\begin{align}\label{Mean squared error estimates for actual particles vs McKean--Vlasov particles proof eq 6}
    \E\!\left[\!\sup_{t\in[0,\tau]}\!|u^\epsilon_{ik}(t)-\Bar{u}^\epsilon_{ik}(t)|^2\!\right]
    \!\leq\,&
    C
    \Bigg\{\!\!\int_0^{\tau}\!\!\!\!\E\big[|u^\epsilon_{ik}(r)-\Bar{u}^\epsilon_{ik}(r)|^2\big]\,dr
    \nonumber \\
    &\quad+
    \int_0^{\tau}\!\!\frac{1}{MN}\!\!\sum_{j,m=1}^{N,M}\E\big[|u^\epsilon_{jm}(r)-\Bar{u}^\epsilon_{jm}(r)|^2\big]\,dr 
   \nonumber \\
    &\quad+
    \int_0^{\tau}\!\!\!\E\Bigg[\bigg|\frac{1}{MN}\sum_{j,m=1}^{N,M}\bigg(b_1(X_i,X_j,r,\Bar{u}^\epsilon_{ik},\Bar{u}^\epsilon_{jm})
     \\
    &\qquad\qquad-\!\!\int_{Q\times\R^4}\!\!\!\!\!\!\!\!\!\!\!\!b_1(X_i,y,r,\Bar{u}^\epsilon_{ik},v)f(r,dy,dv)\bigg)\bigg|^2\Bigg]dr
    \Bigg\}
    \\
    &\quad+
    \int_0^{\tau}\!\!\!\E\Bigg[\bigg|\frac{1}{MN}\sum_{j,m=1}^{N,M}\bigg(\sigma_1(X_i,X_j,r,\Bar{u}^\epsilon_{ik},\Bar{u}^\epsilon_{jm})
    \\
    &\qquad\qquad-\!\!\int_{Q\times\R^4}\!\!\!\!\!\!\!\!\!\!\!\!\sigma_1(X_i,y,r,\Bar{u}^\epsilon_{ik},v)f(r,dy,dv)\bigg)\bigg|^2\Bigg]dr
    \Bigg\}.
\end{align}
Averaging \eqref{Mean squared error estimates for actual particles vs McKean--Vlasov particles proof eq 6} over $i=1,\dots,N$ and $k=1,\dots,M$, and then using Gr\"onwall's Lemma to get rid of the first two terms on the right hand side, we obtain 
\begin{gather}
\begin{aligned}\label{Mean squared error estimates for actual particles vs McKean--Vlasov particles proof eq 7}
    \frac{1}{MN}\sum_{i=1}^N\sum_{k=1}^M\E\!\left[\!\sup_{t\in[0,\tau]}\!|u^\epsilon_{ik}(t)-\Bar{u}^\epsilon_{ik}(t)|^2\!\right]
    \!\leq\,&
    C
    \frac{1}{MN}\sum_{i=1}^N\sum_{k=1}^M\int_0^{\tau}\!\! R^b_{ik}(t)+R^{\sigma}_{ik}(t)\,\,dt,
\end{aligned}
\end{gather}
for another constant $C=C(T,b,\sigma)$. 
Here we have defined 
\begin{align}
    R^b_{ik}\!(t)\!=&\,\E\Bigg[\bigg|\frac{1}{MN}\sum_{j=1}^N\sum_{m=1}^M\Big(b_1(X_i,X_j,t,\Bar{u}^\epsilon_{ik}(t),\Bar{u}^\epsilon_{jm}(t))\!-\!\int_{Q\times\R^4}\!\!\!\!\!\!\!\!\!\!\!\!b_1(X_i,y,t,\Bar{u}^\epsilon_{ik}(t),v)f(t,y,dv)\,dy\Big)\,\bigg|^2\Bigg],
    \label{Mean squared error estimates for actual particles vs McKean--Vlasov particles proof eq 8.1}
    \\
    R^{\sigma}_{ik}\!(t)\!=&\,\E\Bigg[\bigg|\frac{1}{MN}\sum_{j=1}^N\sum_{m=1}^M\!\!\!\Big(\sigma_1(X_i,X_j,t,\Bar{u}^\epsilon_{ik}(t),\Bar{u}^\epsilon_{jm}(t))\!-\!\int_{Q\times\R^4}\!\!\!\!\!\!\!\!\!\!\!\!\sigma_1(X_i,y,t,\Bar{u}^\epsilon_{ik}(t),v)f(t,y,dv)\,dy\Big)\,\bigg|^2\!\Bigg],
    \label{Mean squared error estimates for actual particles vs McKean--Vlasov particles proof eq 8.2}
\end{align}
which are the arguments of the last two integrals on the right hand side of \eqref{Mean squared error estimates for actual particles vs McKean--Vlasov particles proof eq 6}.
Heuristically, the error terms $R_{ik}^b$ and $R_{ik}^{\sigma}$ should be small in view of the weak law of large numbers. Indeed, upon conditioning on $\Bar{u}^\epsilon_{ik}$, for each fixed $j=1,\dots,N$, we are essentially taking the average of the i.i.d. terms $b_1(X_i,X_j,t,\Bar{u}^\epsilon_{ik}(t),\Bar{u}^\epsilon_{jm}(t))$ for $m=1,\dots,M$, and then subtracting their common expectation $\int_{Q\times\R^4}b_1(X_i,y,r,\Bar{u}^\epsilon_{ik}(t),v)f(t,y,dv)\,dy$.

In order to control the term to the right in \eqref{Mean squared error estimates for actual particles vs McKean--Vlasov particles proof eq 7}, we need the following estimate whose proof is postponed for the sake of the reader.
For any $T>0$, we have
\begin{equation}\label{R^b estimate}
    \sup_{t\in[0,T]}|R^b_{ik}(t)|+|R^\sigma_{ik}(t)|\leq C\left(1+\sup_{x\in Q}\E\left[|u_k(x,0)|^2\right]\right)\,\left(\frac{1}{M}+\frac{1}{N^{\frac{\alpha}{d}}}\right),
\end{equation}
for a constant $C=C(T,b,\sigma,\rho,[u(\cdot,0)]_{\alpha})$, for every $i=1,\dots,N$ and $k=1,\dots,M$.
Plugging \eqref{R^b estimate} into \eqref{Mean squared error estimates for actual particles vs McKean--Vlasov particles proof eq 7} we obtain, for $C=C(T,b,\sigma,\rho,[u(\cdot,0)]_{\alpha})$,
\begin{gather}
\begin{aligned}\label{Mean squared error estimates for actual particles vs McKean--Vlasov particles proof eq 9}
    \frac{1}{MN}\sum_{i=1}^N\sum_{k=1}^M\E\!\left[\!\sup_{t\in[0,\tau]}\!|u^\epsilon_{ik}(t)-\Bar{u}^\epsilon_{ik}(t)|^2\!\right]
    \!\leq\,&
    C\left(1+\sup_{x\in Q}\E\left[|u_k(x,0)|^2\right]\right)\,\left(\frac{1}{M}+\frac{1}{N^{\frac{\alpha}{d}}}\right).
\end{aligned}
\end{gather}

We can now finally prove Theorem \ref{Mean squared error estimates for actual particles vs McKean--Vlasov particles}.
We go back to \eqref{Mean squared error estimates for actual particles vs McKean--Vlasov particles proof eq 6}, and get rid of the first term on the right hand side with Gr\"onwall's Lemma.
We control the second term on the right hand side with \eqref{Mean squared error estimates for actual particles vs McKean--Vlasov particles proof eq 9} and the last two terms with \eqref{R^b estimate}.
This yields formula \eqref{Mean squared error estimates for actual particles vs McKean--Vlasov particles eq 1} and concludes the proof.

\end{proof}

\begin{proof}[\textbf{Proof of estimate \eqref{R^b estimate}}]
We prove the estimate for $R_{ik}^b$.
Identical computations replacing $b$ with $\sigma$ prove the analogous result for $R^{\sigma}_{ik}$.
Recalling that $\meas(Q_j^N)=\frac{1}{N}$, we split the term as
\begin{align}\label{R^b estimate proof eq 1}
    R^b_{ik}\!(t)\!=&\,\E\Bigg[\bigg|\frac{1}{MN}\sum_{j=1}^N\sum_{m=1}^M\Big(b_1(X_i,X_j,t,\Bar{u}^\epsilon_{ik}(t),\Bar{u}^\epsilon_{jm}(t))\!-\!\int_{Q\times\R^4}\!\!\!\!\!\!\!\!\!\!\!\!b_1(X_i,y,t,\Bar{u}^\epsilon_{ik}(t),v)f(t,y,dv)\,dy\Big)\,\bigg|^2\Bigg]
    \nonumber \\
    \leq \,
    &
    \E\Bigg[\bigg|\frac{1}{MN}\sum_{j=1}^N\sum_{m=1}^M\Big(b_1(X_i,X_j,t,\Bar{u}^\epsilon_{ik}(t),\Bar{u}^\epsilon_{jm}(t))\!-\!\int_{\R^4}\!\!\!\!b_1(X_i,X_j,t,\Bar{u}^\epsilon_{ik}(t),v)f(t,X_j,dv)\Big)\,\bigg|^2\Bigg]
    \\
    &
    +
    \E\Bigg[\bigg|\sum_{j=1}^N\Big(\int_{Q_j^N}\!\int_{\R^4}\!\!b_1(X_i,X_j,t,\Bar{u}^\epsilon_{ik}(t),v)\,f(t,X_j,dv)\! \nonumber \\
     &\qquad -\!\int_{\R^4}\!\!\!\!\!b_1(X_i,y,t,\Bar{u}^\epsilon_{ik}(t),v)f(t,y,dv)\,\,dy\Big)\,\bigg|^2\Bigg]. \nonumber
\end{align}

For the first term of \eqref{R^b estimate proof eq 1}, the estimate is proved similarly to the weak law of large numbers.
Indeed, for $C=C(T,b,\sigma)$, we compute
\begin{align}\label{R^b estimate proof eq 2}
    \E&\Bigg[\bigg|\frac{1}{MN}\sum_{j=1}^N\sum_{m=1}^M\Big(b_1(X_i,X_j,t,\Bar{u}^\epsilon_{ik}(t),\Bar{u}^\epsilon_{jm}(t))\!-\!\int_{\R^4}\!\!\!\!b_1(X_i,X_j,t,\Bar{u}^\epsilon_{ik}(t),v)f(t,X_j,dv)\Big)\,\bigg|^2\Bigg]
    \nonumber \\
    &\leq
    \frac{1}{N}\sum_{j=1}^N
    \E\Bigg[\bigg|\frac{1}{M}\sum_{m=1}^M\Big(b_1(X_i,X_j,t,\Bar{u}^\epsilon_{ik}(t),\Bar{u}^\epsilon_{jm}(t))\!-\!\int_{\R^4}\!\!\!\!b_1(X_i,X_j,t,\Bar{u}^\epsilon_{ik}(t),v)f(t,X_j,dv)\Big)\,\bigg|^2\Bigg]
    \nonumber \\
    &=
    \frac{1}{N}\sum_{j=1}^N\frac{1}{M^2}\sum_{\substack{m_1=1\\m_2=1}}^M
    \E\Bigg[\Big(b_1(X_i,X_{j},t,\Bar{u}^\epsilon_{ik},\Bar{u}^\epsilon_{jm_1})-\!\int_{\R^4}\!\!\!\!b_1(X_i,X_j,t,\Bar{u}^\epsilon_{ik},v)f(t,X_j,dv)\Big)
    \\
    &\qquad\qquad\qquad\qquad\qquad\cdot\Big(b_1(X_i,X_{j},t,\Bar{u}^\epsilon_{ik},\Bar{u}^\epsilon_{jm_2})\!-\!\int_{\R^4}\!\!\!\!b_1(X_i,X_j,t,\Bar{u}^\epsilon_{ik},v)f(t,X_j,dv)\Big)\Bigg]
    \\
    &=
    \frac{1}{N}\sum_{j=1}^N\frac{1}{M^2}\sum_{m=1}^M
    \E\Bigg[\bigg(b_1(X_i,X_{j},t,\Bar{u}^\epsilon_{ik},\Bar{u}^\epsilon_{jm})-\!\int_{\R^4}\!\!\!\!b_1(X_i,X_j,t,\Bar{u}^\epsilon_{ik},v)f(t,X_j,dv)\bigg)^2\Bigg]
    \\
    &\leq
    \frac{1}{M}\, C\,\left(1+\sup_{x\in Q}\E\left[|u_k(x,0)|^2\right]\right).
\end{align}
In the first passage we used a convexity inequality.
In the last passage we used the sublinear growth properties \eqref{b_1 sigma_1 sublinear property} of $b_1$ and the a priori estimate \eqref{Strong existence and uniqueness for the McKean--Vlasov eq 1} for McKean--Vlasov particles.
In the second passage we unfolded the square, and in the third we noticed that, after conditioning with respect to $\Bar{u}^\epsilon_{ik}(t)$, only the ``diagonal terms'' survive in the sum, i.e. those with $m_1=m_2$.
Namely, when $m_1\neq m_2$ the corresponding term in \eqref{R^b estimate proof eq 2} is identically zero.
Indeed, under this condition, assuming by symmetry $m_1\neq k$, we have that $\Bar{u}^\epsilon_{jm_1}(t)$ is independent of $\Bar{u}^\epsilon_{jm_2}(t)$ and $\Bar{u}^\epsilon_{ik}(t)$.
Hence we compute
\begin{align}\label{R^b estimate proof eq 3}
    \E&\Bigg[\Big(b_1(X_i,X_{j},t,\Bar{u}^\epsilon_{ik},\Bar{u}^\epsilon_{jm_1})-\!\int_{\R^4}\!\!\!\!b_1(X_i,X_j,t,\Bar{u}^\epsilon_{ik},v)f(t,X_j,dv)\Big)
    \nonumber \\
    &\qquad\qquad\qquad\qquad\qquad\cdot\Big(b_1(X_i,X_{j},t,\Bar{u}^\epsilon_{ik},\Bar{u}^\epsilon_{jm_2})\!-\!\int_{\R^4}\!\!\!\!b_1(X_i,X_j,t,\Bar{u}^\epsilon_{ik},v)f(t,X_j,dv)\Big)\Bigg]
    \nonumber \\
    &=
    \E\Bigg[\E\bigg[\Big(b_1(X_i,X_{j},t,\Bar{u}^\epsilon_{ik},\Bar{u}^\epsilon_{jm_1})-\!\int_{\R^4}\!\!\!\!b_1(X_i,X_j,t,\Bar{u}^\epsilon_{ik},v)f(t,X_j,dv)\Big)
    \\
    &\qquad\qquad\qquad\qquad\cdot\Big(b_1(X_i,X_{j},t,\Bar{u}^\epsilon_{ik},\Bar{u}^\epsilon_{jm_2})\!-\!\int_{\R^4}\!\!\!\!b_1(X_i,X_j,t,\Bar{u}^\epsilon_{ik},v)f(t,X_j,dv)\Big)\bigg|\,\,\Bar{u}^\epsilon_{ik}\,\bigg]\Bigg]
    \\
    &=
    \E\Bigg[\E\bigg[\Big(b_1(X_i,X_{j},t,u,\Bar{u}^\epsilon_{jm_1})-\!\int_{\R^4}\!\!\!\!b_1(X_i,X_j,t,u,v)f(t,X_j,dv)\Big)\bigg]_{u=\bar{u}_{ik}^\epsilon}
    \\
    &\qquad\qquad\qquad\qquad\cdot\E\bigg[\Big(b_1(X_i,X_{j},t,\Bar{u}^\epsilon_{ik},\Bar{u}^\epsilon_{jm_2})\!-\!\int_{\R^4}\!\!\!\!b_1(X_i,X_j,t,\Bar{u}^\epsilon_{ik},v)f(t,X_j,dv)\Big)\bigg|\,\,\Bar{u}^\epsilon_{ik}\,\bigg]\Bigg]
    \\
    &=0.
\end{align}
In the second passage we conditioned on $\bar{u}_{ik}^\epsilon$ and in the third passage we used standard properties of the conditional expectation (see e.g. \cite[Chapter 2]{daprato_zabczyk_1992}).
Finally we used that $E[b_1(X_i,X_j,t,u,\Bar{u}^\epsilon_{jm})]=\int_{\R^4}b_1(X_i,X_j,t,u,v)f(t,X_j,dv)$ by definition of $f(t,X_j,dv)$.

For the second term second term on the right hand side of \eqref{R^b estimate proof eq 1}, we first compute

\begin{align}\label{R^b estimate proof eq 4}
    \bigg|\int_{\R^4}&\!\!b_1(X_i,X_j,t,\Bar{u}^\epsilon_{ik}(t),v)\,f(t,X_j,dv)\!-\!\int_{\R^4}\!\!\!\!\!b_1(X_i,y,t,\Bar{u}^\epsilon_{ik}(t),v)f(t,y,dv)\bigg|
    \nonumber \\
    &
    \leq
    \int_{(\R^4)^2}\!\!\left|b_1(X_i,X_j,t,\Bar{u}^\epsilon_{ik}(t),v)\!-\!b_1(X_i,y,t,\Bar{u}^\epsilon_{ik}(t),w)\right|\,\pi_0(X_j,y,dv,dw)
    \nonumber \\
    &
    \leq  C\,
     \int_{(\R^4)^2}\!\!\!\!\left|X_j-y\right|^\alpha+|v-w|\,\,\pi_0(X_j,y,dv,dw)
     \\
     &
     \leq C\,
     |X_j-y|^\alpha+\w_1\left(\R^4\right)(f(t,X_j,dv),f(t,y,dv))
     \\
     &\leq C\,|X_j-y|^\alpha
\end{align}
for a constant $C=C(T,b,\sigma,\rho,[u(\cdot,0)]_{\alpha})$.
In the second passage we took any optimal pairing $\pi_0(X_j,y,dv,dw)$ for $\w_1(f(t,X_j,dv),f(t,y,dv))$, in the third we used the Lipschitz and H\"older properties \eqref{b_1 sigma_1 lipschitz property} of $b_1$, and in the last we used the ordering $\w_1\leq\w_2$ of Wasserstein distances and the H\"older continuity \eqref{Well-posedness of the non-linear fokker--planck equations eq 2} of $f$ in $\w_2$.
Then, using \eqref{R^b estimate proof eq 4} and recalling that $\meas(Q_j^N)=\frac{1}{N}$ and $\text{diam}(Q_j^N)=N^{-\frac{1}{d}}$, we compute

\begin{align}\label{R^b estimate proof eq 5}
    \E&\Bigg[\bigg|\sum_{j=1}^N\Big(\int_{Q_j^N}\!\int_{\R^4}\!\!b_1(X_i,X_j,t,\Bar{u}^\epsilon_{ik}(t),v)\,f(t,X_j,dv)\!-\!\int_{\R^4}\!\!\!\!\!b_1(X_i,y,t,\Bar{u}^\epsilon_{ik}(t),v)f(t,y,dv)\,\,dy\Big)\,\bigg|^2\Bigg]
    \nonumber \\
    &
    \leq
    \E\Bigg[\bigg|\sum_{j=1}^N\int_{Q_j^N}\!\!C|X_j-y|^\alpha\,dy\bigg|^2\Bigg]
    \\
    &
    \leq 
    C\,\text{diam}(Q_j^N)^{2\alpha}
    \\
    &=C\,N^{-\frac{2\alpha}{d}},
\end{align}
for a constant $C=C(T,b,\sigma,\rho,[u(\cdot,0)]_{\alpha})$.

In conclusion, combining \eqref{R^b estimate proof eq 1} with estimates \eqref{R^b estimate proof eq 3} and \eqref{R^b estimate proof eq 5} we obtain the estimate \eqref{R^b estimate}.
\end{proof}


\section{Convergence of empirical measures}
\label{Convergence of empirical measures}

In this last section, we further analyze the limiting behaviour of the particle system as we let $M,N\to\infty$ and prove Theorem \ref{Rate of convergence for empirical measures}.
In the same setting outlined in Section \ref{Comparison between the particle system and the limiting model}, we show that the time dependent empirical measure 
$$
f^{\epsilon}_{MN}(t,dx,du)=\frac{1}{MN}\sum_{j=1}^N\sum_{m=1}^M\delta_{(X_j,u^{\epsilon}_{jm}(t))}\in\pr(Q\times\R^4),
$$ 
associated to the particle system \eqref{abstract 4 model particle system}, located at the grid points $X_1,\dots,X_N$, converges in Wasserstein distance $\w_1(Q\times\R^4)$ to the measure $f(t,dx,du)$, obtained from the solution of the Fokker--Planck equation \eqref{abstract 4 model nonlinear fokker--planck} via formula \eqref{McKean--Vlasov law on Q times R^4}.
The key step towards the result is to split the Wasserstein distance:
\begin{align}\label{splitting wasserstein distance}
    \w_1(Q\times\R^4)(f^{\epsilon}_{MN}(t),f(t))
    \leq
    \w_1(f^{\epsilon}_{MN}(t),\Bar{f}^{\epsilon}_{MN}(t))+
    \w_1(\Bar{f}^{\epsilon}_{MN}(t),\Bar{f}_N(t))+
    \w_1(\Bar{f}_N(t),f(t)).
\end{align}
Here $\bar{f}^{\epsilon}_{MN}=\frac{1}{MN}\sum_{j=1}^N\sum_{m=1}^M\delta_{(X_j,\Bar{u}^{\epsilon}_{jm}(t))}$ is the empirical measure of the associated McKean--Vlasov particles as in Section \ref{Comparison between the particle system and the limiting model}, and
\begin{equation}
    \bar{f}_N(t,dx,du)\coloneqq\frac{1}{N}\sum_{j=1}^N\delta_{X_j}\otimes f(t,X_j,du),
\end{equation}
an auxiliary measure, can be viewed as a Riemann sum approximation for the measure $f(t,dx,du)$.
Then Theorem \ref{Rate of convergence for empirical measures} is an immediate consequence of the splitting \eqref{splitting wasserstein distance} and Lemma \ref{rate of convergence f_MN vs bar f_MN}, \ref{rate of convergence bar f_MN vs bar f_N} and \ref{rate of convergence bar f_N vs f} below. The first term in \eqref{splitting wasserstein distance} is readily handled with Theorem \ref{Mean squared error estimates for actual particles vs McKean--Vlasov particles} as follows.

\begin{lemma}\label{rate of convergence f_MN vs bar f_MN}
In the setting above, for any $T>0$ we have
\begin{equation}\label{rate of convergence f_MN vs bar f_MN eq 1}
    \sup_{t\in[0,T]}\E\left[\w_1(Q\times\R^4)(f^{\epsilon}_{MN}(t),\Bar{f}^{\epsilon}_{MN}(t))\right]\leq C  \left(1+\sup_{x\in Q}\E\left[|u_k(x,0)|^2\right]^{\frac{1}{2}}\right) \left(\frac{1}{M^{\frac{1}{2}}}+\frac{1}{N^{\frac{\alpha}{d}}}\right),
\end{equation}
for a constant $C=C(T,\rho,b,\sigma,[u(\cdot,0)]_{\alpha})$.
\end{lemma}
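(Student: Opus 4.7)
The plan is to exploit the fact that $f^{\epsilon}_{MN}(t)$ and $\Bar{f}^{\epsilon}_{MN}(t)$ are supported on atoms that share the same first coordinate $X_j$ and are naturally indexed by the same pair $(j,m)$. This suggests the obvious synchronous coupling: match the atom $(X_j, u^{\epsilon}_{jm}(t))$ with the atom $(X_j, \Bar{u}^{\epsilon}_{jm}(t))$, both carrying equal mass $\tfrac{1}{MN}$. Under this coupling the first coordinates cancel exactly and one immediately gets
\begin{equation*}
\w_1(Q\times\R^4)\big(f^{\epsilon}_{MN}(t),\Bar{f}^{\epsilon}_{MN}(t)\big) \leq \frac{1}{MN}\sum_{j=1}^{N}\sum_{m=1}^{M}\big|u^{\epsilon}_{jm}(t)-\Bar{u}^{\epsilon}_{jm}(t)\big|.
\end{equation*}

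Next I would take expectation on both sides, pass the expectation inside the finite sum by Fubini, and apply Jensen's inequality to each term to obtain $\E\big[|u^{\epsilon}_{jm}(t)-\Bar{u}^{\epsilon}_{jm}(t)|\big]\leq \E\big[|u^{\epsilon}_{jm}(t)-\Bar{u}^{\epsilon}_{jm}(t)|^{2}\big]^{1/2}$. Since $|u^{\epsilon}_{jm}(t)-\Bar{u}^{\epsilon}_{jm}(t)|\leq \sup_{r\in[0,t]}|u^{\epsilon}_{jm}(r)-\Bar{u}^{\epsilon}_{jm}(r)|$, I can now invoke Theorem \ref{Mean squared error estimates for actual particles vs McKean--Vlasov particles} uniformly in the indices $(j,m)$ to bound each summand by
\begin{equation*}
C\,t\,\left(\frac{1}{N^{\alpha/d}}+\frac{1}{M^{1/2}}\right)\left(1+\sup_{x\in Q}\E\big[|u_{k}(x,0)|^{2}\big]^{1/2}\right),
\end{equation*}
with $C = C(T,\rho,b,\sigma,[u(\cdot,0)]_{\alpha})$. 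Averaging over $(j,m)$ is harmless since this upper bound does not depend on $(j,m)$, and taking the supremum over $t\in[0,T]$ absorbs the factor $t$ into $CT$, yielding \eqref{rate of convergence f_MN vs bar f_MN eq 1}.

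There is essentially no obstacle here: the whole argument is a direct consequence of choosing the synchronous index-matching coupling and then invoking the previously proved rate in Theorem \ref{Mean squared error estimates for actual particles vs McKean--Vlasov particles}. The only small point to check is that Theorem \ref{Mean squared error estimates for actual particles vs McKean--Vlasov particles} applies uniformly in $(j,m)$ with a constant independent of the indices, which is built into its statement (the right-hand side does not depend on $i,k$). This is why the exponents $1/M^{1/2}$ and $1/N^{\alpha/d}$ in \eqref{rate of convergence f_MN vs bar f_MN eq 1} are inherited verbatim from the particle-level estimate rather than improved by averaging.
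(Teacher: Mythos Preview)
Your proposal is correct and follows essentially the same approach as the paper: the paper also uses the synchronous coupling $\pi_0=\frac{1}{MN}\sum_{j,m}\delta_{(X_j,u^{\epsilon}_{jm}(t),X_j,\Bar{u}^{\epsilon}_{jm}(t))}$, obtains the same pathwise bound on $\w_1$, then takes expectations, passes to the $L^2$ norm (the paper invokes H\"older where you invoke Jensen, which is equivalent here), and concludes via Theorem~\ref{Mean squared error estimates for actual particles vs McKean--Vlasov particles}.
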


\begin{proof}
It suffices to notice that
    \begin{equation}\label{rate of convergence f_MN vs bar f_MN proof eq 1}
        \pi_0=\frac{1}{MN}\sum_{j=1}^N\sum_{k=1}^M\delta_{(X_j,u^{\epsilon}_{jk}(t),X_j,\Bar{u}^{\epsilon}_{jk}(t))},\nonumber
    \end{equation}
is an admissible pairing for $f^{\epsilon}_{MN}(t)$ and $\Bar{f}^{\epsilon}_{MN}(t)$. 
Then, by definition of the Wasserstein distance,
\begin{equation}\label{rate of convergence f_MN vs bar f_MN proof eq 2}\nonumber
    \w_1(f^{\epsilon}_{MN}(t),\Bar{f}^{\epsilon}_{MN}(t))
    \leq
    \int_{(Q\times\R^4)^2}\!\!\!\!\!\!\!\!\!\!\!\!\!\!\!\!|x-y|+|u-v|\,\,d\pi_0
    =
    \frac{1}{MN}\sum_{j=1}^N\sum_{m=1}^M|u^{\epsilon}_{jm}(t)-\Bar{u}^{\epsilon}_{jm}(t)|.
\end{equation}
Next we apply $\E$ at both sides of the inequality and use H\"older's inequality to get
\begin{gather}
\begin{aligned}
        \E\big[\w_1(f^{\epsilon}_{MN}(t),\Bar{f}^{\epsilon}_{MN}(t))\big]
        &
        \leq
        \frac{1}{MN}\sum_{j=1}^N\sum_{m=1}^M
        \E\big[|u^{\epsilon}_{jm}(t)-\Bar{u}^{\epsilon}_{jm}(t)|\big]
        \\
        \label{rate of convergence f_MN vs bar f_MN proof eq 3}
        &\leq 
        \frac{1}{MN}\sum_{j=1}^N\sum_{m=1}^M
        \E\left[|u_{jm}^{\epsilon}(t)-\Bar{u}^{\epsilon}_{jm}(t)|^2\right]^{\frac{1}{2}}.
\end{aligned}
\end{gather}
Now we conclude by plugging \eqref{Mean squared error estimates for actual particles vs McKean--Vlasov particles eq 1} into \eqref{rate of convergence f_MN vs bar f_MN proof eq 3}.
\end{proof}

Let us now turn to the second term in the splitting \eqref{splitting wasserstein distance}. 
To start with, in a weak law of large numbers manner, we get the following lemma.

\begin{lemma}\label{convergence bar f_MN vs bar f_N}
In the setting above, for every $N\in\N$ and every $\epsilon>0$, for any $t\geq0$,
\begin{equation}\label{convergence bar f_MN vs f eq 1}
    \lim_{M\to\infty}\w_1(Q\times\R^4)(\Bar{f}^{\epsilon}_{MN}(t),\bar{f}_N(t))=0\quad\text{in probability.}\nonumber
\end{equation}
\end{lemma}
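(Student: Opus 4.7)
The plan is to exploit the shared $Q$-marginal of the two measures and reduce to a family of classical empirical-measure convergence problems in $\R^4$. Both $\Bar{f}^{\epsilon}_{MN}(t)$ and $\bar{f}_N(t)$ have first marginal equal to $\frac{1}{N}\sum_{j=1}^N\delta_{X_j}$, so I would write
\[
    \Bar{f}^{\epsilon}_{MN}(t)=\frac{1}{N}\sum_{j=1}^N \delta_{X_j}\otimes \hat{f}_j^M(t), \qquad \bar{f}_N(t)=\frac{1}{N}\sum_{j=1}^N \delta_{X_j}\otimes f(t,X_j,du),
\]
where $\hat{f}_j^M(t)\coloneqq \frac{1}{M}\sum_{m=1}^M\delta_{\Bar{u}_{jm}^{\epsilon}(t)}$. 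Pairing the $X_j$ atoms diagonally on the $Q$-component and selecting, for each $j$, an optimal $\w_1(\R^4)$-coupling $\pi_j$ between $\hat{f}_j^M(t)$ and $f(t,X_j,du)$ on the $u$-component yields the admissible coupling
\[
    \pi = \frac{1}{N}\sum_{j=1}^N (\delta_{X_j}\otimes\delta_{X_j})\otimes \pi_j \in \Pi(\Bar{f}^{\epsilon}_{MN}(t),\bar{f}_N(t)).
\]
Substituting $\pi$ into the definition \eqref{wasserstein distance} gives the reduction
\[
    \w_1(Q\times\R^4)(\Bar{f}^{\epsilon}_{MN}(t),\bar{f}_N(t)) \leq \frac{1}{N}\sum_{j=1}^N \w_1(\R^4)\bigl(\hat{f}_j^M(t),\, f(t,X_j,du)\bigr).
\]

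Next, I would use the fact, already noted in the lemma preceding this one, that for each fixed $j$ and $t$ the family $(\Bar{u}_{jm}^{\epsilon}(t))_{m\in\N}$ is i.i.d.\ with common law $f(t,X_j,du)$. Indeed, the initial data $(u_m(X_j,0))_m$ and the noises $(W_m^{\epsilon}(X_j,\cdot))_m$ are i.i.d.\ in $m$ by construction, and the McKean--Vlasov solution is a measurable deterministic functional of this data against the common (non-random and $\epsilon$-independent) law $f$ guaranteed by Theorem \ref{Well-posedness of the non-linear fokker--planck equations}. Since $f(t,X_j,\cdot)\in\pr_2(\R^4)$, the classical convergence of empirical measures in Wasserstein distance --- Varadarajan's theorem combined with the law of large numbers applied to $|\Bar{u}_{jm}^{\epsilon}(t)|$, or equivalently the quantitative bounds of \cite{Fournier2013OnTR} --- yields, for each fixed $j$,
\[
    \w_1(\R^4)\bigl(\hat{f}_j^M(t),\, f(t,X_j,du)\bigr) \xrightarrow{M\to\infty} 0 \quad \text{almost surely.}
\]

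Finally, since $N$ is fixed, the finite sum of $N$ almost-surely vanishing non-negative terms converges to zero almost surely, and therefore in probability, establishing the claim. There is no genuine obstacle at this qualitative level: the diagonal coupling decouples the problem on $Q\times\R^4$ into $N$ independent classical problems on $\R^4$, each resolved by the i.i.d.-in-$m$ structure at the grid point $X_j$. The harder quantitative work, turning this qualitative convergence into an explicit rate in $M$, is what the subsequent lemma will carry out.
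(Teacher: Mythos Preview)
Your proof is correct, and it is genuinely different from --- and considerably more direct than --- the paper's own argument. The paper proceeds by first establishing $L^2(\Omega)$-convergence of $\langle\varphi,\Bar{f}^{\epsilon}_{MN}(t)\rangle$ to $\langle\varphi,\bar{f}_N(t)\rangle$ for test functions of linear growth (via expanding the square and using independence in $m$), then metrizes weak convergence on $\pr(Q\times\R^4)$ through a separable family of bounded uniformly continuous test functions, and finally combines this with convergence of first moments and a subsequence/diagonal argument to reach convergence in probability in $\w_1$. In contrast, you bypass all of this by exploiting the shared $Q$-marginal and the convexity of $\w_1$ to reduce to a finite sum of classical empirical-measure problems on $\R^4$, each handled by Varadarajan's theorem since $f(t,X_j,\cdot)\in\pr_2\subset\pr_1$.

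What each approach buys: your route is shorter and conceptually cleaner, and it is in fact the qualitative skeleton of the paper's \emph{next} lemma (Lemma~\ref{rate of convergence bar f_MN vs bar f_N}), which uses the identical convexity decomposition but replaces Varadarajan with the quantitative Fournier--Guillin bound to extract the rate $M^{-1/4}$. The paper's route for the present lemma has the minor advantage that the $L^2$ estimate \eqref{convergence bar f_MN vs f proof eq 1} is explicitly uniform in $N$, but since the statement fixes $N$ this is not needed here. Given that the subsequent lemma supersedes this one anyway, your more elementary argument is entirely adequate and arguably preferable.
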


\begin{proof}
The key observation is the following: if $\varphi(x,v)\in C(Q\times\R^4)$ has linear growth in $v$, that is $|\varphi(x,v)|\leq L_{\varphi} (1+|v|)$ for some constant $L_{\varphi}\geq0$, then we have $\langle\varphi,\Bar{f}_{MN}(t)\rangle\to\langle\varphi,\bar{f}_N(t)\rangle$ in $L^2(\Omega)$ as $M\to\infty$, uniformly in $N\in\N$.
Indeed, with the same arguments as in the proof of Lemma \ref{R^b estimate}, we have, for a constant $C=C(T,b,\sigma)$ independent of $\varphi$,
{\small
 \begin{align}\label{convergence bar f_MN vs f proof eq 1}
        \E\left[|\langle\varphi,\Bar{f}^{\epsilon}_{MN}(t)\rangle-\langle\varphi,\bar{f}_N(t)\rangle|^2\right]
        &\!\leq\,\frac{1}{N}\sum_{j=1}^N
        \E\Bigg[\bigg|\frac{1}{M}\sum_{m=1}^M\Big(\varphi(X_j,\Bar{u}^{\epsilon}_{jm}(t))\!-\!\!\int_{\R^4}\varphi(X_j,v)f(t,X_j,dv)\Big)\bigg|^2\Bigg]
        \\
        &\!=
        \frac{1}{N}\sum_{j=1}^N
        \frac{1}{M^2}\!\!\sum_{\substack{m_1\neq m_2}}\!\!\E\Bigg[\!\bigg(\!\varphi(X_{j},\Bar{u}^{\epsilon}_{jm_1}(t))\!-\!\!\int_{\R^4}\!\!\!\!\!\varphi(X_j,v)f(t,X_j,dv)\!\bigg)
        \\
        &\qquad\qquad\qquad\qquad\qquad\cdot\!\bigg(\!\varphi(X_{j},\Bar{u}^{\epsilon}_{jm_2}(t))\!-\!\!\int_{\R^4}\!\!\!\!\!\varphi(X_j,v)f(t,X_j,dv)\bigg)\!\Bigg]
        \\
        &
        \leq
        \frac{1}{M}\,L_{\varphi}^2\, C(T,b,\sigma)\,\left(1+\sup_{x\in Q}\E\big[|u(x,0)|^2\big]\right).
\end{align}

We now collect some auxiliary facts and then use these observations to complete the proof.
Since $Q\times\R^4$ is a Polish space, it embeds continuously in the compact space $[0,1]^{\N}$ endowed with the distance $\eta(x,y):=\sum_{k=1}^\infty\frac{1}{2^k}|x_k-y_k|$.
Let $\widebar{Q\times\R^4}$ denote the closure of (the image of) $Q\times\R^4$ in $[0,1]^{\N}$.
Let $U_{\eta}(Q\times\R^4)$ denote the space of function $\psi:Q\times\R^4\to\R$ which are bounded and uniformly continuous with respect to the distance $\eta$ restricted to $Q\times\R^4$.
By the continuous extension theorem we have that $U_{\eta}(Q\times\R^4)=C_b\big(\widebar{Q\times\R^4}\big)$, that is to say each bounded uniformly continuous function on $Q\times\R^4$ extends uniquely to a bounded continuous function on $\widebar{Q\times\R^4}$ and conversely each such function restricts to a bounded uniformly continuous function on $Q\times\R^4$.
The space $U_{\eta}(Q\times\R^4)$ is separable, since $\widebar{Q\times\R^4}$ is compact.
Let $\{\varphi_n\}_{n\in\N}$ be a dense countable subset and set $\psi_n:=\frac{1}{\|\varphi_n\|_{\infty}+1}\varphi_n$ for every $n$.

Given measures $\mu_j,\mu\in\pr(Q\times\R^4)$, the Portmanteau theorem implies that $\mu_j\rightharpoonup\mu$ as $j\to\infty$ if and only if $\langle\varphi,\mu_j-\mu\rangle\to0$ for every $\varphi\in U_{\eta}(Q\times\R^4)$.
Defining the distance $\delta$ on $\pr(Q\times\R^4)$ by
\begin{equation*}
    \delta(\mu,\nu):=\sum_{n\in\N}\frac{1}{2^n}|\langle\psi_n,\mu-\nu\rangle|,
\end{equation*}
we immediately see that, as $j\to\infty$,
\begin{equation*}
    \mu_j\rightharpoonup\mu \iff \delta(\mu_j,\mu)\to0.
\end{equation*}
Finally, we recall that the convergence in Wasserstein distance of order 1 is equivalent to weak convergence combined with convergence of first moments (see e.g. \cite[Chapter 7]{Villani}), i.e.
{\small
\begin{equation}\label{convergence bar f_MN vs f proof eq 2}
    \w_1(Q\times\R^4)(\mu_j,\mu)\to0\iff\begin{cases}
     \delta(\mu_j,\mu)\to0,
     \\
     \displaystyle\int_{Q\times\R^4}\!\!\!\!\!\!|x|+|u|\,d\mu_j\to\int_{Q\times\R^4}\!\!\!\!\!\!|x|+|u|\,d\mu.
    \end{cases}
\end{equation}
}

We now conclude the proof.
Using the definitions of the measures $\bar{f}_{MN}$ and $\bar{f}_N$, convexity inequalities and \eqref{convergence bar f_MN vs f proof eq 1}, we compute, for every $M,N\in\N$ and $\epsilon>0$,
\begin{align}\label{convergence bar f_MN vs f proof eq 3}
    \begin{split}
        \E\left[\delta(\bar{f}_{MN},\bar{f}_N)^2\right]
        &
        \leq\sum_{k=1}^\infty\frac{1}{2^k}\E\left[|\langle\psi_k,\bar{f}_{MN}-\bar{f}_N\rangle|^2\right]
        \\
        &\leq
        \frac{1}{M}C(T,b,\sigma)\left(1+\sup_{x\in Q}\E\big[|u(x,0)|^2\big]\right).
    \end{split}
\end{align}
Analogously we have, for every $M,N\in\N$ and $\epsilon>0$,
\begin{align}\label{convergence bar f_MN vs f proof eq 4}
    \begin{split}
        \E\left[\left|\int_{Q\times\R^4}\!\!\!\!\!\!|x|+|u|\,d\bar{f}_{MN}-\int_{Q\times\R^4}\!\!\!\!\!\!|x|+|u|\,d\bar{f}_{N}\right|^2\right]
        \leq
        \frac{1}{M}C(T,b,\sigma)\left(1+\sup_{x\in Q}\E\big[|u(x,0)|^2\big]\right).
    \end{split}
\end{align}

Given any arbitrary subsequence $M_k\to\infty$, using \eqref{convergence bar f_MN vs f proof eq 3}--\eqref{convergence bar f_MN vs f proof eq 4} and a diagonal argument, we find a sub-subsequence $M_{k_j}\to\infty$ such that
\begin{align}
    \begin{split}
        \forall N\in\N\quad \delta(\bar{f}_{NM_{k_j}},\bar{f}_N)\to0\quad\text{and}\quad\int_{Q\times\R^4}\!\!\!\!\!\!|x|+|u|\,\,\,(d\bar{f}_{NM_{k_j}}\!\!\!-d\bar{f}_N)\,\to0\quad\text{almost surely.}
    \end{split}
\end{align}
The result now follows from \eqref{convergence bar f_MN vs f proof eq 2} and the relation between almost sure convergence and convergence in probability. 
}
\end{proof}

\begin{remark}
It is possible to improve the result of the previous lemma with elementary cut-off techniques and show that
\begin{equation*}
    \lim_{M\to\infty}\E[\w_1(Q\times\R^4)(f^{\epsilon}_{MN}(t),\bar{f}_N(t))]=0\quad\text{for every $t\geq0$, $N\in\N$ and $\epsilon>0$.}
\end{equation*}
However, this method does not retain any information about the precise rate of convergence to 0, which in principle also depends on $N$. To keep track of this, we need to rely on a more sophisticated result by Fournier and Guillin \cite{Fournier2013OnTR} about the convergence of empirical laws of i.i.d particles towards their actual law.
\end{remark}

\begin{lemma}\label{rate of convergence bar f_MN vs bar f_N}
In the setting above, for any $T>0$  and any $N\in\N$,
\begin{equation}\label{rate of convergence bar f_MN vs f eq 1}
    \sup_{t\in[0,T]}\E\left[\w_1(Q\times\R^4)(\Bar{f}^{\epsilon}_{MN}(t),\bar{f}_N(t))\right]\leq C \left(1+\sup_{x\in Q}\E\left[|u_k(x,0)|^2\right]^{\frac{1}{2}}\right)\frac{1}{M^{\frac{1}{4}}},
\end{equation}
for a constant $C=C(T,b,\sigma,Q)$.
\end{lemma}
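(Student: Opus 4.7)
The plan is to reduce the bound to a uniform-in-$x$ application of the Fournier--Guillin quantitative convergence result for empirical measures of i.i.d.\ samples \cite{Fournier2013OnTR}. The decisive structural observation is that both $\Bar{f}^{\epsilon}_{MN}(t)$ and $\bar{f}_N(t)$ have the same first marginal $\frac{1}{N}\sum_{j=1}^N\delta_{X_j}$ on $Q$, so they differ only through their vertical slices over each $X_j$. A coupling which keeps the $Q$-component fixed therefore makes the $|x-y|$ part of the cost vanish and reduces the problem to an average of $\w_1$-distances on $\R^4$.

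First I would construct this coupling explicitly. For each $j=1,\dots,N$, let $\pi_j(du,dv)$ be an optimal $\w_1(\R^4)$-coupling between $\nu_j^M(du)\coloneqq\frac{1}{M}\sum_{m=1}^M\delta_{\bar{u}^{\epsilon}_{jm}(t)}(du)$ and $f(t,X_j,dv)$, and set $\pi\coloneqq\frac{1}{N}\sum_{j=1}^N\delta_{(X_j,X_j)}\otimes\pi_j$. This is an admissible coupling between $\Bar{f}^{\epsilon}_{MN}(t)$ and $\bar{f}_N(t)$ on $(Q\times\R^4)^2$, and integrating $|x-y|+|u-v|$ against it immediately yields
\begin{equation*}
\w_1(Q\times\R^4)\bigl(\Bar{f}^{\epsilon}_{MN}(t),\bar{f}_N(t)\bigr)\leq \frac{1}{N}\sum_{j=1}^N \w_1(\R^4)\bigl(\nu_j^M,f(t,X_j,\cdot)\bigr).
\end{equation*}

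The crucial point is now the i.i.d.\ structure in the column index. For each fixed $j$ and $t$, the random variables $\bar{u}^{\epsilon}_{jm}(t)=\bar{u}^{\epsilon}_{m}(X_j,t)$ are genuinely i.i.d.\ for $m=1,\dots,M$ with common law $f(t,X_j,du)$, since the initial data $u_m(\cdot,0)$ and the correlated noises $W_m^\epsilon$ are i.i.d.\ in $m$ and the McKean--Vlasov equation \eqref{abstract 4 model McKean--Vlasov} depends on $m$ only through these. Hence $\nu_j^M$ is precisely the empirical measure of $M$ i.i.d.\ samples in $\R^4$, and the Fournier--Guillin estimate \cite{Fournier2013OnTR} provides
\begin{equation*}
\E\bigl[\w_1(\R^4)(\nu_j^M,f(t,X_j,\cdot))\bigr]\leq C\,M^{-1/4}\left(1+\Bigl(\int_{\R^4}|v|^2\,f(t,X_j,dv)\Bigr)^{1/2}\right),
\end{equation*}
where the $M^{-1/4}$ rate is the one dictated by the ambient dimension $d=4$ and finite second moments suffice.

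Finally I would close the estimate using the a priori bound \eqref{Well-posedness of the non-linear fokker--planck equations eq 1}, which controls $\sup_{t\in[0,T]}\sup_{x\in Q}\int_{\R^4}|v|^2 f(t,x,dv)$ by $C(T,b,\sigma)\bigl(1+\sup_{x\in Q}\E[|u(x,0)|^2]\bigr)$, uniformly in $j$. Averaging the Fournier--Guillin bound over $j=1,\dots,N$ and taking $\sup_{t\in[0,T]}$ then produces the claimed estimate \eqref{rate of convergence bar f_MN vs f eq 1}. The reduction step is essentially cost-free thanks to the shared first marginal; the only nontrivial ingredient, and the main potential obstacle, is invoking \cite{Fournier2013OnTR} in dimension $4$ under just a second-moment assumption, which is exactly what our a priori estimates on the McKean--Vlasov solution guarantee.
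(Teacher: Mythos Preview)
Your proposal is correct and follows essentially the same route as the paper: the paper invokes convexity of the Wasserstein distance to reduce $\w_1(Q\times\R^4)(\bar f^{\epsilon}_{MN},\bar f_N)$ to an average over $j$ of $\w_1(\R^4)$-distances between the empirical and true laws at each grid point, which is exactly what your explicit coupling $\pi=\frac{1}{N}\sum_j\delta_{(X_j,X_j)}\otimes\pi_j$ produces, and then applies Fournier--Guillin in dimension $4$ with $p=1$, $q=2$ together with the a priori second-moment bound on the McKean--Vlasov particles. The only cosmetic difference is that the paper cites \eqref{Strong existence and uniqueness for the McKean--Vlasov eq 1} rather than \eqref{Well-posedness of the non-linear fokker--planck equations eq 1} for the moment control, which amounts to the same estimate.
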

\begin{proof}
{
 The explicit expressions for $\bar{f}_{MN}^{\epsilon}$ and $\bar{f}_N$ and the convexity of the Wasserstein distance yield
\begin{gather}\label{rate of convergence bar f_MN vs bar f_N proof eq 1}
    \begin{aligned}
        \E\left[\!\w_1(Q\times\R^4)\!\!\left(f^{\epsilon}_{MN}(t),\bar{f}_N(t)\right)\right]\!
        &\leq\!
        \frac{1}{N}\!\sum_{j=1}^N\!\E\left[\w_1(Q\times\R^4)\!\!\left(\!\frac{1}{M}\!\sum_{m=1}^M\!\delta_{(X_j,\bar{u}^{\epsilon}_{jm})},\delta_{X_j}\otimes f(t,X_j,dv)\right)\!\!\right]
        \\
        &=
        \frac{1}{N}\sum_{j=1}^N\E\left[\w_1(\R^4)\left(\frac{1}{M}\sum_{m=1}^M\delta_{\bar{u}^{\epsilon}_{jm}},f(t,X_j,dv)\right)\right].
    \end{aligned}
\end{gather}
Now, observe that for each fixed $j$, the particles $\bar{u}_{jm}^{\epsilon}(t)$ for $m=1,\dots,M$ are i.i.d. with common law $f(t,X_j,dv)$.
A direct application of Theorem 1 in \cite{Fournier2013OnTR}, with $p=1$, $q=2$ and $d=4$, implies
\begin{equation} \label{rate of convergence bar f_MN vs bar f_N proof eq 2}
    \E\left[\w_1(\R^4)\left(\frac{1}{M}\sum_{m=1}^M\delta_{\bar{u}^{\epsilon}_{jm}(t)},f(t,X_j,dv)\right)\right]\leq C\, \E\left[|\bar{u}_{jm}^{\epsilon}(t)|^2\right]\, M^{-\frac{1}{4}}.
\end{equation}
We conclude using the a priori estimates \eqref{Strong existence and uniqueness for the McKean--Vlasov eq 1} and plugging formula \eqref{rate of convergence bar f_MN vs bar f_N proof eq 2} into \eqref{rate of convergence bar f_MN vs bar f_N proof eq 1}.
} 
\end{proof}

Finally we consider the last term in \eqref{splitting wasserstein distance}.
For this \emph{deterministic} term we have the following.
\begin{lemma}\label{rate of convergence bar f_N vs f}
In the setting above, for any $T\geq0$,
\begin{equation}\label{convergence bar f_MN vs f eq 1}
    \sup_{t\in[0,T]}\w_1(Q\times\R^4)(\Bar{f}_{N}(t),f(t))\leq C \frac{1}{N^{\frac{\alpha}{d}}},
\end{equation}
for a constant $C=C(T,\rho,b,\sigma,[u(\cdot,0)]_{\alpha})$.
\end{lemma}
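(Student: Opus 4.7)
The plan is to build an explicit, deterministic coupling between $\bar{f}_N(t)$ and $f(t)$ and bound the Wasserstein cost cell-by-cell, taking advantage of the H\"older regularity of $f$ in $x$ provided by Theorem \ref{Well-posedness of the non-linear fokker--planck equations}. The key observation is that both measures admit a natural partition into $N$ pieces of mass $1/N$ indexed by the grid cells $Q_j^N$: the measure $\bar{f}_N(t)$ assigns the atom $\frac{1}{N}\delta_{X_j}\otimes f(t,X_j,du)$ to each cell, while $f(t)$ decomposes as $\sum_j \int_{Q_j^N}\delta_x\otimes f(t,x,du)\,dx$. This cell-by-cell correspondence will handle the spatial transport cost via the grid geometry, and the transport cost in the $u$-variable via the H\"older continuity of $x\mapsto f(t,x,\cdot)$.

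Concretely, for each $j$ and each $x\in Q_j^N$, let $\pi^j_x(du,dv)\in\pr(\R^4\times\R^4)$ be an optimal coupling realizing $\w_1(\R^4)(f(t,X_j,\cdot),f(t,x,\cdot))$. I would then define the measure
\begin{equation*}
    \pi(dy,du,dz,dv) \coloneqq \sum_{j=1}^N \int_{Q_j^N}\delta_{X_j}(dy)\otimes\delta_x(dz)\otimes\pi^j_x(du,dv)\,dx
\end{equation*}
on $(Q\times\R^4)^2$. A short check using $\meas(Q_j^N)=1/N$ and the marginal conditions on $\pi^j_x$ confirms that the first marginal of $\pi$ is exactly $\bar{f}_N(t)$ and the second marginal is exactly $f(t)$, so $\pi$ is an admissible transport plan.

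The cost of $\pi$ splits cleanly as
\begin{equation*}
    \int_{(Q\times\R^4)^2}\!\!\!\!(|y-z|+|u-v|)\,d\pi = \sum_{j=1}^N\!\int_{Q_j^N}\!|X_j-x|\,dx + \sum_{j=1}^N\!\int_{Q_j^N}\!\w_1(\R^4)(f(t,X_j,\cdot),f(t,x,\cdot))\,dx.
\end{equation*}
The first sum is bounded by $\tfrac{\sqrt d}{2}N^{-1/d}$ using $|X_j-x|\le\tfrac12\mathrm{diam}(Q_j^N)$ and $\sum_j\meas(Q_j^N)=1$; since $\alpha\in(0,1]$ this is at most $CN^{-\alpha/d}$. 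For the second sum, I invoke Theorem \ref{Well-posedness of the non-linear fokker--planck equations}: the estimate \eqref{Well-posedness of the non-linear fokker--planck equations eq 2} on the path space gives, upon projecting to time $t$, $\w_1(\R^4)(f(t,x,\cdot),f(t,y,\cdot))\le\w_2(\R^4)(f(t,x,\cdot),f(t,y,\cdot))\le C|x-y|^\alpha$ uniformly in $t\in[0,T]$. Combined with $|X_j-x|\le\sqrt d\,N^{-1/d}$ for $x\in Q_j^N$, this yields the bound $C\,N^{-\alpha/d}$ for the second sum as well.

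The only subtle point, and what I would verify carefully, is the claim that the path-space H\"older bound \eqref{Well-posedness of the non-linear fokker--planck equations eq 2} descends to a time-uniform H\"older bound on the time-marginals $f(t,x,du)$: this follows from the fact that the evaluation map $v\mapsto v(t)$ on $C([0,T];\R^4)$ is $1$-Lipschitz, so it transports a $\w_2$-coupling on the path space to a $\w_2$-coupling on $\R^4$ at any time $t\in[0,T]$, with no enlargement of the cost. Everything else is a direct computation, and the statement follows by taking the supremum over $t\in[0,T]$ of the cost of $\pi$.
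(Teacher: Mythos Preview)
Your proof is correct and follows essentially the same route as the paper: you construct the identical cell-by-cell coupling $\pi$ via optimal $\w_1(\R^4)$-couplings $\pi^j_x$ between $f(t,X_j,\cdot)$ and $f(t,x,\cdot)$, split the cost into the spatial part (bounded by $\mathrm{diam}(Q_j^N)=\sqrt{d}\,N^{-1/d}\le C N^{-\alpha/d}$) and the $u$-part (bounded via $\w_1\le\w_2$ and the H\"older estimate \eqref{Well-posedness of the non-linear fokker--planck equations eq 2}). Your explicit remark that the path-space bound descends to time-marginals via the $1$-Lipschitz evaluation map is a point the paper leaves implicit, so if anything you are slightly more careful.
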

\begin{proof}
For every $t\in[0,T]$, we consider the following pairing $\pi(t)$ defined by integration as
\begin{gather}\label{rate of convergence bar f_N vs f proof eq 1}
\begin{aligned}
    \int_{(Q\times\R^4)^2}&\varphi(x,y,u,v)\,\pi(t,dx,dy,du,dv)
    \\
    &=\sum_{j=1}^N\int_{Q_j^N}\int_{(\R^4)^2}\varphi(X_j,y,u,v)\,\pi_0(t,X_j,y,du,dv)\,dy\qquad\forall\varphi\in C_b\left((Q\times\R^4)^2\right),
\end{aligned}
\end{gather}
where $\pi_0(t,X_j,y,du,dv)$ is a chosen optimal pairing for $\w_1(\R^4)(f(t,X_j,du),f(t,y,dv))$. 
Recalling that $\meas(Q_j^N)=\nicefrac{1}{N}$, an elementary check shows that $\pi(t)$ is indeed a pairing.
Taking $\varphi(x,y,u,v)=|x-y|+|u-v|$, using the definition of $\pi(t)$ and $\pi_0(t,X_j,y)$, and recalling formula \eqref{Well-posedness of the non-linear fokker--planck equations eq 2} we compute
\begin{align*}
   \w_1(\bar{f}_N(t),f(t))
    &\leq
    \int_{(Q\times\R^4)^2}|x-y|+|u-v|\,\pi(t,dx,dy,du,dv)
    \\
    &=
    \sum_{j=1}^N\int_{Q_j^N}\int_{(\R^4)^2}|X_j-y|+|u-v|\,\pi_0(t,X_j,y,du,dv)\,dy
    \\
    &\leq
    \text{diam}(Q_j^N)+\sum_{j=1}^N\int_{Q_j^N}\int_{(\R^4)^2}|u-v|\,\pi_0(t,X_j,y,du,dv)\,dy
    \\
    &=
    \text{diam}(Q_j^N)+\sum_{j=1}^N\int_{Q_j^N}\w_1(\R^4)(f(t,X_j,du),f(t,y,dv))\,dy
    \\
    &\leq
    \text{diam}(Q_j^N)+\sum_{j=1}^N\int_{Q_j^N}C\,|X_j-y|^{\alpha}\,dy
    \\
    &\leq
    \text{diam}(Q_j^N)+ C\,\text{diam}(Q_j^N)^{\alpha},
\end{align*}
for a constant $C=C(T,\rho,b,\sigma,[u(\cdot,0)]_{\alpha})$.
Recalling that $\text{diam}(Q_j^N)=N^{-\frac{1}{d}}$ concludes the proof.
\end{proof}

\vspace{-0.5em}

\section*{Acknowledgments}
\vspace{-0.5em}
This research has been supported by the EPSRC Centre for Doctoral Training in Mathematics of Random Systems: Analysis, Modelling and Simulation (EP/S023925/1).
JAC was supported by the Advanced Grant Nonlocal-CPD (Nonlocal PDEs for Complex Particle Dynamics: Phase Transitions, Patterns and Synchronization) of the European Research Council Executive Agency (ERC) under the European Union's Horizon 2020 research and innovation programme (grant agreement No. 883363).
The authors would like to thank Lucio Galeati for pointing out a crucial mistake in the first draft of the manuscript.
\vspace{-0.4em}
\bibliography{references} 
\bibliographystyle{abbrv}

\end{document}